\documentclass[letterpaper]{amsart}

\usepackage{cmap}             
\usepackage[T1]{fontenc}      
\usepackage[english]{babel}   
\usepackage{microtype}        
\usepackage{csquotes}         
\usepackage[shortlabels]{enumitem}         
\usepackage{multirow}         
\usepackage{booktabs}         
\usepackage{array}            
\usepackage{xcolor}
\usepackage{newtxtext}        
\usepackage{comment}

\definecolor{black}{HTML}{000000}
\definecolor{OIOrange}{HTML}{E69F00}
\definecolor{OISkyBlue}{HTML}{56B4E9}
\definecolor{OIBluishGreen}{HTML}{009E73}
\definecolor{OIYellow}{HTML}{F0E442}
\definecolor{OIBlue}{HTML}{0072B2}
\definecolor{OIVermilion}{HTML}{D55E00}
\definecolor{OIPurple}{HTML}{CC79A7}

\usepackage{mathtools}        
\usepackage{amssymb}          
\usepackage{amsthm}
\usepackage{newtxmath}

\usepackage{thmtools} 		    
\usepackage{graphicx}         
\usepackage{subcaption}       
\usepackage{tikz}
\usetikzlibrary{arrows, decorations.markings, arrows.meta, shapes.misc}

\usepackage{import}
\usepackage[
  backend=biber,
  giveninits=true,
  isbn=false,
  url=false,
  style=alphabetic,
  sorting=nyt]{biblatex}
\renewbibmacro*{in:}{%
  \ifentrytype{article}
    {}
    {\printtext{\bibstring{in}\intitlepunct}}
}

\usepackage{xurl}           
\usepackage{hyperref} 
\hypersetup{
    colorlinks=true,
    linkcolor=OIVermilion,     
    citecolor=OIBlue,          
    urlcolor=OISkyBlue,        
    filecolor=OIBluishGreen,   
    pdfauthor={Mauro Artigiani, Ángel Pardo},
    pdftitle={Counting Z-covers of square-tiled surfaces},
    pdfsubject={Mathematics},
    pdfcreator={pdfLaTeX}
}
\usepackage[noabbrev, nameinlink]{cleveref} 

\declaretheorem[name=Theorem, numberwithin=section]{theorem}
\declaretheorem[name=Lemma, sibling=theorem]{lemma}
\declaretheorem[name=Proposition, sibling=theorem]{proposition}
\declaretheorem[name=Corollary, sibling=theorem]{corollary}

\declaretheorem[
  name=Theorem,
  Refname={Theorem,Theorems},
  refname={theorem,theorems}
]{bigthm}

\declaretheorem[name=Definition, sibling=theorem, style=definition]{definition}

\declaretheorem[name=Remark, sibling=theorem, style=remark, qed=$\triangle$]{remark}

\DeclarePairedDelimiter{\pairing}{\langle}{\rangle}

\providecommand{\given}{}
\newcommand{\SetSymbol}[1][]{%
	\nonscript\:\mathord{:}
	\allowbreak
	\nonscript\:
	\mathopen{}}
\DeclarePairedDelimiterX{\Set}[1]\{\}{%
	\renewcommand{\given}{\SetSymbol[\delimsize]}
	#1
}

\DeclarePairedDelimiterX\intcc[2]{[}{]}{#1, #2}
\DeclarePairedDelimiterX\intoo[2]{(}{)}{#1, #2}
\DeclarePairedDelimiterX\intoc[2]{(}{]}{#1, #2}
\DeclarePairedDelimiterX\intco[2]{[}{)}{#1, #2}

\newcommand{\numberset}{\mathbf}
\newcommand{\NN}{\numberset{N}}
\newcommand{\ZZ}{\numberset{Z}}
\newcommand{\QQ}{\numberset{Q}}
\newcommand{\RR}{\numberset{R}}
\newcommand{\CC}{\numberset{C}}

\newcommand{\TT}{\numberset{T}}
\renewcommand{\epsilon}{\varepsilon}

\newcommand{\cC}{\mathcal{C}}
\newcommand{\cH}{\mathcal{H}}
\newcommand{\cM}{\mathcal{M}}

\newcommand{\sC}{\mathscr{C}}

\DeclareMathOperator{\SL}{SL}
\DeclareMathOperator{\hol}{hol}
\DeclareMathOperator{\ann}{Ann}
\DeclareMathOperator{\proj}{pr}
\DeclareMathOperator{\cw}{cw}
\DeclareMathOperator{\vol}{vol}
\DeclareMathOperator{\area}{area}
\DeclareMathOperator{\Aff}{Aff}

\newcommand{\slz}{{\SL_2(\ZZ)}}
\newcommand{\slr}{{\SL_2(\RR)}}

\newcommand{\tS}{\widetilde{S}}
\newcommand{\qS}{\widetilde{S}^q}

\newcommand{\modn}[1]{\mathbin{\equiv_{#1}}}
\newcommand{\nmodn}[1]{\mathbin{\not\equiv_{#1}}}

\tikzset{
    k-bull/.style={circle, fill=black, inner sep=1.5pt},
    v-bull/.style={circle, fill=OIVermilion, inner sep=1.5pt},      
    b-bull/.style={circle, fill=OISkyBlue, inner sep=1.5pt},        
    g-bull/.style={circle, fill=OIBluishGreen, inner sep=1.5pt},    
    bb-bull/.style={circle, fill=OIBlue, inner sep=1.5pt},          
    o-bull/.style={circle, fill=OIOrange, inner sep=1.5pt}          
}

\begin{document}


\title[Counting cylinders on covers of genus $2$ square-tiles]{Counting cylinders on $\ZZ$-covers of \\ genus $2$ square-tiled surfaces}
\author[M. Artigiani]{Mauro Artigiani}
\address{
Departamento de Matemáticas,
Universidad Nacional de Colombia,
Ak 30, \#45-03, Bogotá, Colombia.
}
\email{martigiani@unal.edu.co}

\author[A. Pardo]{Angel Pardo}
\address{
Departamento de Matemáticas,
Universidad de Chile,
Las Palmeras 3425, Ñuñoa, Santiago, Chile.
}
\email{angelpardo@uchile.cl}

\date{\today}

\subjclass[2020]{Primary 37C35; Secondary 30F30, 37F34, 57M10}
\keywords{Translation surfaces, square-tiled surfaces, $\ZZ$-covers, zero-holonomy covers, cylinder counting, Siegel--Veech constants, infinite translation surfaces, infinite staircases.}

\begin{abstract}
  We count maximal cylinders on zero holonomy $\ZZ$-covers of genus $2$
  square-tiled surfaces, up to $\ZZ$-action, obtaining quadratic asymptotics. We
  also show that the leading term of the asymptotic, called the Siegel--Veech
  constant, can be recovered via a large-genus approximation by intermediate
  finite covers. Our work applies to the infinite staircases introduced by
  P.~Hubert and G.~Weitze-Schmithüsen. For many members of this family, we
  explicitly compute the associated Siegel--Veech constants. In particular, we
  exhibit the first infinite family of examples of zero holonomy $\ZZ$-cover in
  which the number of cylinders grows sub-quadratically.
\end{abstract}

\maketitle

\section{Introduction}
The classical Gauss circle problem states that the number $N(L)$ of (families of)
parallel simple closed geodesics on the flat torus $\TT^2 = \RR^2/\ZZ^2$ of
length at most $L$ grows quadratically in $L$ and, more precisely,
\begin{equation}\label{eq:Gauss}    
    N(L) \sim \frac{1}{2\zeta(2)} \cdot \pi L^2,
\end{equation}
where $\zeta$ is the Riemann zeta function and the factor $1/2$ comes from the
fact that we are counting unoriented geodesics. The constant $1/2\zeta(2)$ is
called the \emph{Siegel--Veech constant} of this counting problem\footnote{We
remark that, by convention, it is the constant of $\pi L^2$ and not the one of
the asymptotic.}.

Compact translation surfaces are higher genus generalization of flat tori. They
carry a singular Euclidean structure, with finitely many conical singularities
of cone angle a positive integral multiple of $2\pi$. In particular, geodesics
for the flat metric are given by following straight lines in charts. Similarly
to the case of the flat torus, closed geodesics come in families of parallel
lines, which fill a cylinder bounded by \emph{saddle connections}, that is
geodesics segments joining two (not necessarily distinct) singularities. 

In the case of the flat torus, if we mark the origin and create an
\enquote{artificial singularity}, we could also count saddle connections
connecting this singularity to itself. This count clearly corresponds to the one
for cylinders in \cref{eq:Gauss}. In the case of higher genus translation
surfaces, however, these are two different problems.

It is an open problem to determine whether the asymptotic for counting families
of saddle connections or (maximal) cylinders of simple closed geodesics of length
at most $L$, for \emph{all} translation surfaces is quadratic.

This article deals with counting on $\ZZ$-covers of square-tiled translation
surfaces. More precisely, we study the asymptotic of cylinders of simple closed
geodesics of length at most $L$ for zero holonomy $\ZZ$-covers of genus $2$
square-tiled surfaces, up to the natural $\ZZ$-action. In particular, our
results apply to one of the first examples of such surfaces: the
Hubert--Weitze-Schmithüsen staircases~\cite{Hubert-Schmithusen}, see
\Cref{sec:HSstaircases}. 

We show that the counting obeys a quadratic asymptotic, and, in many cases,
obtain explicitly the \emph{Siegel--Veech constant} which gives the leading
coefficient of the asymptotic. Moreover, we show that this constant can be
recovered by studying the corresponding counting problem on $\ZZ_q$-covers, for
a prime $q$, as $q\to\infty$. In other words, we obtain the Siegel--Veech
constant as a \emph{high-genus limit} of related Siegel--Veech constants.

\subsection{Main results}
A \emph{square-tiled} (translation) surface (also known as \enquote{origami}) is
a pair $(S,\omega)$ where $S$ is a translation cover of the flat torus $\TT^2$,
ramified only over the origin and $\omega$ is the pull-back under the covering
map of the $1$-form induced on $\TT^2$ by $dz$, see \Cref{fig:case00} for an
example and \Cref{sec:square-tiled_surfs} for more details.

\begin{figure}[tbh]
    \centering
    \begin{tikzpicture}[scale=.85]
        \foreach \i in {0,...,1}
            \draw (\i,0) rectangle ++(1,1);

        \node at (0.5,0.5) {$1$};
        \node at (1.5,0.5) {$2$};
	
        \draw[very thick] (0,0) -- ++(1,0) node[midway,below] {$A$};
        \draw[very thick] (1,1) -- ++(1,0) node[midway,above] {$A$};
        \draw[very thick] (1,1) -- ++(-1,0) node[midway,above] {$B$};
        \draw[very thick] (2,0) -- ++(-1,0) node[midway,below] {$B$};
        
        \node[v-bull] at (0,0) {};
        \node[v-bull] at (2,0) {};
        \node[v-bull] at (1,1) {};
        \node[b-bull] at (0,1) {};
        \node[b-bull] at (2,1) {};
        \node[b-bull] at (1,0) {};
    \end{tikzpicture}
    \caption{$S_{2,0}\in\cH(0,0)$. In this case $\sigma_h = \sigma_v = (1 \, 2)$.}
    \label{fig:case00}
\end{figure}

Let $S$ be a square-tiled surface of genus $2$, and consider a $\ZZ$-cover $\tS$
of $S$. As we explain in \Cref{sec:Zcovers}, such a cover is determined by a
single homology class $\eta \in H_1(S,\Sigma; \ZZ)$. We ask that the holonomy of
$\eta$ is $0$, since these are the only dynamically relevant covers. We refer to
such a cover as a \enquote{zero holonomy cover}, see \Cref{sec:Zcovers} for more
details. Let $N(\tS, L)$ denote the number of maximal cylinders of closed
geodesics of length at most $L$, up to translation by $\ZZ$. Equivalently,
$N(\tS,L)$ counts the number of isotopy classes of simple non-oriented closed
geodesics of length at most $L$, up to $\ZZ$-action. Our first main result is
the precise asymptotic for $N(\tS, L)$.

\begin{bigthm}\label{bigthm:asymptotic}
    Let $S$ be a square-tiled surface of genus $2$, and consider a zero
    holonomy cover $\ZZ$-cover $\tS$ of $S$, branched only over the
    singularities of $S$. Then, there exists a non negative constant $c(\tS)$ such that 
    \[
        N(\tS,L) = c(\tS) \cdot \pi L^2 + o(L^2).
    \]
\end{bigthm}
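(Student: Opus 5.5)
The plan is to reduce the count on $\tS$ to a weighted count of cylinders on the compact surface $S$, and then use the Veech dichotomy together with equidistribution of primitive vectors for the lattice $\mathrm{SL}(S)\subset\slz$.

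\textbf{Lifting criterion.} Since $S$ is square-tiled, it is a Veech surface. Every direction containing a closed geodesic is therefore periodic, and $S$ splits into finitely many maximal cylinders $C_1,\dots,C_k$ in that direction. Let $\gamma_i$ be the core curve of $C_i$ and $h_i=\hol(\gamma_i)$ its holonomy. The cover $\tS$ is defined by $\eta\in H_1(S,\Sigma;\ZZ)$, and $\gamma_i$ lifts to a closed curve on $\tS$ exactly when $m_i:=\gamma_i\cdot\eta=0$. In general the primitive lift of $\gamma_i$ closes after going around $|m_i|$ times when $m_i\neq 0$. In the latter case the cylinder unwraps into a strip, and there is no closed geodesic at all, because $\ZZ$ is torsion-free: a multiple $n\gamma_i$ lifts to a closed curve only if $n m_i=0$. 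So up to the $\ZZ$-action, the maximal cylinders of $\tS$ in this direction correspond bijectively to the $C_i$ with $m_i=0$, and each has circumference $|h_i|$. There is one subtlety: a single cylinder of $S$ could lift to a cylinder of $\tS$ that merges with neighbours into a larger maximal cylinder. I expect this cannot happen, since the boundary saddle connections still contain the singularities, over which the cover is branched.

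Hence
\[
N(\tS,L)=\#\{\text{cylinders } C \text{ of } S : |\hol(\gamma_C)|\le L,\ \gamma_C\cdot\eta=0\}.
\]

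\textbf{Orbit decomposition.} The periodic directions split into finitely many cusps, that is, $\mathrm{SL}(S)$-orbits of parabolic directions. For a cusp with parabolic generator $P$, the cylinder decomposition in direction $gv$ for $g\in\mathrm{SL}(S)$ is the image under an affine map $\phi_g$ of the decomposition in direction $v$. The condition $\gamma\cdot\eta=0$ then becomes $(\phi_g)_*\gamma_i\cdot\eta=0$, equivalently $\gamma_i\cdot(\phi_g^{-1})_*\eta=0$.

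The affine group acts on the homology of the genus $2$ surface through a finite-index structure. On the non-tautological part it acts through a finite group, because square-tiled surfaces in genus $2$ have Lyapunov exponent $1/3$ on an arithmetic monodromy. More concretely, the action factors through a congruence-type quotient modulo some integer once we reduce the relevant functionals. The plan is therefore:

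\begin{enumerate}
\item Show that the condition $\gamma_i\cdot(\phi_g^{-1})_*\eta=0$ depends only on the class of $g$ in $\mathrm{SL}(S)/\Gamma'$ for a finite-index subgroup $\Gamma'$, modulo its dependence on $gv$. The tautological part of $\eta$ vanishes because $\eta$ has zero holonomy, so only the part of $\eta$ in the $4$-dimensional complement (relative homology) matters.
\item For a lattice $\Gamma'$, the number of vectors $g v$ with $g\in\Gamma'$ and $|gv|\le L$ is $c\,\pi L^2+o(L^2)$. This is Veech's counting for lattices.
\end{enumerate}

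Summing over the finitely many cusps, cosets and cylinders then gives $c(\tS)\pi L^2+o(L^2)$, with $c(\tS)\ge 0$.

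\textbf{Main obstacle.} The hard step is the first one: the homological action need not factor through a finite quotient. The symplectic complement of the tautological plane carries the action of $\mathrm{SL}(S)$, which can have infinite image, since the second Lyapunov exponent is nonzero. The condition $m_i=0$ is then not periodic along the orbit.

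To handle this I would treat the count as the integral of an indicator over the orbit and analyse the pair $(gv,\,(\phi_g^{-1})_*\eta \bmod \text{stuff})$. One option is to show that the zero set is governed by a congruence condition, since $\gamma\cdot\eta$ is an integer linear functional evaluated on integer vectors. Then apply equidistribution of the lattice orbit in the skew product $\mathrm{SL}_2(\RR)\times_{\Gamma} \ZZ^d$ modulo $q$. Letting $q$ vary gives density estimates: the set where $m_i=0$ is a decreasing intersection of congruence conditions $m_i\equiv 0 \bmod q$, each with quadratic asymptotics of constant $c_q$. This yields the limit $c=\lim c_q$ as an upper bound, which matches the paper's high-genus remark.

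The matching lower bound needs an independence argument. For each fixed nonzero value $m$, the set where $m_i=m$ must have vanishing density. I expect this to be the hardest part, handled via the quadratic growth rate versus the sub-linear distribution of the integers $m_i$ along orbits, in the spirit of a local limit theorem.
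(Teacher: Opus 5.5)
Your reduction $N(\tS,L)=\#\{C\subset S : |\hol(\gamma_C)|\le L,\ \pairing{\gamma_C,\eta}=0\}$ is the right starting point, and your upper bound is sound. For each $q$, the set of cylinders with $\pairing{\gamma_C,\eta}\modn{q}0$ is invariant under the finite-index kernel of the action on homology mod $q$, so it has a Veech constant $c_q$, and $\limsup_L N(\tS,L)/\pi L^2\le\inf_q c_q$. (Your first plan does fail, as you noticed: by M\"oller's theorem the action on $H_1^{(0)}(S;\ZZ)$ is a finite-index subgroup of $\slz$. Also, $H_1^{(0)}(S,\Sigma;\ZZ)$ has rank $2$ or $3$, not $4$.) The gap is everything after that. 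You never identify $\inf_q c_q$, and the lower bound you propose does not work. Knowing that each fixed value $m\neq 0$ of $\pairing{\gamma_C,\eta}$ has zero density says nothing about the total mass of $\{\pairing{\gamma_C,\eta}\in q\ZZ\setminus\{0\}\}$, which involves unboundedly many values as $L\to\infty$. So neither $\liminf\ge\inf_q c_q$ nor even the existence of the limit follows.

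The missing idea is to isolate a $\Gamma(S)$-invariant family inside $\{\pairing{\gamma_C,\eta}=0\}$. Let $K$ be the smallest $\Gamma(S)$-invariant submodule of $H_1^{(0)}(S,\Sigma;\ZZ)$ containing $\eta$. The good cylinders, those with $\gamma_C\in\ann(K)$, form a $\Gamma(S)$-invariant set, a finite union of orbits. Veech then gives $N_{\mathrm{good}}(S,L)\sim c_{\mathrm{good}}\pi L^2$, and this is your lower bound for free.

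For large $q$, these cylinders are exactly the atom at $0$ of the distribution of $\proj_K[\gamma_C]\bmod q$. That is why any mod-$q$ equidistribution argument must be run on the not-good cylinders only. What remains is to show that bad cylinders (zero intersection with $\eta$ but $\proj_K[\gamma_C]\neq 0$) are $o(L^2)$, equivalently $c_q\to c_{\mathrm{good}}$.

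The paper does this in $\cH(2)$ via \cite[Theorem~4.1]{Pardo:counting}. In $\cH(1,1)$ the argument runs in three steps. First, for $q$ large every not-good cylinder has $\proj_K[\gamma_C]\nmodn{q}0$, since there are finitely many orbits. Second, arithmeticity plus strong approximation for the image of $\Gamma(S)$ in $\slz\rtimes\ZZ^2$ (with the split case reduced separately) makes $\Gamma(S)$ transitive on the nonzero classes mod $q$. Third, the hyperplane $\pairing{\cdot,\eta}\modn{q}0$ therefore captures only an $O(1/q)$ proportion of the not-good cylinders. That is the concrete content your unspecified independence or local-limit argument would have to supply.
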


We remark that, very recently, in~\cite{ACMSS}, a similar result was obtained
for \emph{finite} branched cyclic covers of \emph{generic} translation surfaces.
Since square-tiled surfaces are \emph{not} generic, our result can be seen as a
counterpart to some of their results.

The main difficulty in \Cref{bigthm:asymptotic} is the presence of relative
homology, which makes the topological analysis more delicate, see
\Cref{sec:sketch} for more details.

The constant $c(\tS)$ is called the \emph{Siegel--Veech} constant associated to
the counting problem. This kind of constants appear in the Siegel--Veech
formula~\cite[Theorem~2.2]{Eskin-Masur}, a Siegel-type formula introduced by
Veech in~\cite{Veech98}; see \Cref{sec:history} for references and \Cref{sec:SV}
for its definition. 

An important class of examples that our \Cref{bigthm:asymptotic} applies to is
the class of infinite staircases, defined by P.~Hubert and G.~Weitze-Schmithüsen
in~\cite{Hubert-Schmithusen}, see \Cref{sec:HSstaircases} for their definition.
We explicitly obtain the Siegel--Veech constants for many
Hubert--Weitze-Schmithüsen staircases in
\Cref{cor:SV_staircases_H2,cor:SV_staircases_H11}, and
\Cref{cor:SV_staircases_H00}.

In particular, as a consequence of these computations we exhibit an infinite
family of examples of zero holonomy $\ZZ$-covers in which the Siegel--Veech
constant of \Cref{bigthm:asymptotic} is actually $0$. In other words, the number
of cylinders grows sub-quadratically.

\begin{bigthm}\label{bigthm:subquadratically} 
    There exists an infinite family of zero holonomy $\ZZ$-covers of genus $2$
    square-tiled surfaces for which the number of cylinders grows
    \emph{sub-quadratically} with respect to the length.
\end{bigthm}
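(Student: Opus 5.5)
The plan is to build the family out of Hubert--Weitze-Schmithüsen staircases and prove that for each of them the Siegel--Veech constant $c(\tS)$ from \Cref{bigthm:asymptotic} equals $0$. Since $N(\tS,L) = c(\tS)\pi L^2 + o(L^2)$, vanishing of $c(\tS)$ is exactly sub-quadratic growth.

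The first step is to make $c(\tS)$ computable. Counting up to the deck $\ZZ$-action, a maximal cylinder on $\tS$ projects to a cylinder $C$ on $S$, say of direction $v$, and the core curve $\gamma$ of $C$ lifts to a closed curve on $\tS$. The lift closes after $k$ turns of $C$, where $k$ is the order of $\pairing{\eta,\gamma}$; if $\pairing{\eta,\gamma}\neq 0$ the lift is infinite and gives no cylinder. So only the cylinders with $\pairing{\eta,\gamma}=0$ contribute, and each of them lifts to a single $\ZZ$-orbit of cylinders of the same length. Hence $N(\tS,L)$ counts the cylinders of $S$ of length at most $L$ whose core curves are $\eta$-orthogonal.

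Because $S$ is square-tiled, directions correspond to primitive vectors $v\in\ZZ^2$. The cylinder decomposition of $S$ in direction $v$, including the homology classes of the core curves, depends only on the $\slz$-orbit data, namely the image of $S$ under a matrix taking $v$ to $(1,0)$. I would therefore partition the primitive vectors according to $\Gamma$-cosets, where $\Gamma$ is the Veech group of $S$ intersected with the stabilizer of $\eta$ under the affine action on homology. This stabilizer has finite index, since $H_1(S,\Sigma;\ZZ/m)$ is finite; alternatively one can use the reduction to finite $\ZZ_q$-covers. Summing over cosets then gives $c(\tS)$ as a finite sum over the cusps of a finite-index subgroup of $\slz$. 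The contribution of a cusp is its width times the number of horizontal cylinders of the corresponding sheared surface $S'$ (with transported class $\eta'$) whose core curve has zero $\eta'$-intersection, divided by squared circumferences. The normalization is the one that reproduces \cref{eq:Gauss} in the torus case.

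The second step, and the one I expect to be the main obstacle, is showing that every term vanishes for an explicit infinite family. I would take staircases built on $S_{2,0}\in\cH(0,0)$, or the analogous ones in $\cH(2)$ and $\cH(1,1)$, with $\eta$ equal to the relative class of Hubert--Weitze-Schmithüsen. The goal is to show that in every rational direction every cylinder core curve has nonzero $\eta$-intersection. I would first try to prove this via a parity or mod-$2$ invariant: show that $\pairing{\eta,\gamma}\bmod 2$ is nonzero for every simple closed geodesic $\gamma$, by checking it on the horizontal cylinders of each surface in the $\slz$-orbit of $S$. The orbit is finite, so this is a finite check for each surface, and the real task is to make it uniform in the family parameter, for instance the number of squares $n$.

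If some orbit representatives do have zero-intersection cylinders, the fallback is to exploit that $\eta$ is relative. Cylinders whose core curve is homologous to an $\eta$-orthogonal class would come in families whose contribution cancels or has density zero, which would give sub-quadratic growth directly without passing through the cusp sum.
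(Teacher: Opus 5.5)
Your observation that a cylinder of $S$ lifts to a $\ZZ$-orbit of cylinders of $\tS$ exactly when $\pairing{\eta,\gamma}=0$ is correct. The rest of the plan has genuine gaps.

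\textbf{The target of your second step is false.} You want every core curve $\gamma$ to satisfy $\pairing{\eta,\gamma}\neq 0$, i.e.\ $\tS$ to have no cylinders at all. For every Hubert--Weitze-Schmithüsen staircase, however, $\eta=\eta_A+\eta_B$ is carried by the horizontal saddle connections $A$ and $B$. So the core curves of the horizontal cylinders of $S_{a,b}$ are disjoint from $\eta$ and lift to cylinders of $\tS$. No parity invariant can exclude them, and sub-quadratic growth must come from a density statement, not from emptiness.

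\textbf{Your candidate family is not the right one.} $S_{2,0}$ is a torus, and its cover has $c(\tS)=1/(12\zeta(2))>0$. Every staircase in $\cH(2)$, and every one in $\cH(1,1)$ with $a$ or $b$ even, has one-cylinder directions. A cylinder filling $S$ has zero intersection with every zero-holonomy class, so these cylinders always lift and give a positive constant. The family that works is $a\ge 3$, $b\ge 1$, both odd.

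\textbf{The cusp-sum formula of your first step is not available.} The stabilizer of $\eta$ for the action on \emph{integral} homology generally has infinite index. Already in $\cH(2)$, $\Gamma(S)$ acts on $H_1^{(0)}(S;\ZZ)\cong\ZZ^2$ through a finite-index subgroup of $\slz$, and the stabilizer of a nonzero vector there is infinite cyclic. Finiteness of $H_1(S,\Sigma;\ZZ/m)$ only controls the stabilizer of $\eta \bmod m$, whereas the lifting condition is integral. So the $\eta$-orthogonal cylinders are not a finite union of cusps. Passing to $\ZZ_q$-covers only moves the difficulty to showing that bad-mod-$q$ cylinders contribute $O(1/q)$.

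\textbf{What is missing} is what the paper's argument combines:
(a) Cylinders orthogonal to $\eta$ but not to its invariant hull $K$ number $o(L^2)$. This is \Cref{thm:bad_cyls_subquad_H11}, which uses Möller's arithmeticity, strong approximation and the ACMSS counting; your fallback only hopes for it.
(b) A Dehn multitwist $T$ in direction $(1,1)$ gives $\pairing{\eta,T\eta}=-(a+b-2)\neq 0$. So $\eta$ is not invariant, $K=H_1^{(0)}(S,\Sigma;\ZZ)$, and by \Cref{lemma:good_are_one-cyl} the good cylinders are exactly those in one-cylinder directions.
(c) For $a,b$ odd there are no one-cylinder directions (\Cref{prop:no_1cyl}). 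Since $a+b$ is even, the six Weierstrass points split two--two--two among horizontal edges, vertical edges and centers of squares. In a one-cylinder direction four of them would lie on horizontal edges, and this unordered distribution is an affine invariant.

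Then $N(\tS,L)=N_{\mathrm{good}}(S,L)+N_{\mathrm{bad}}(S,L)=0+o(L^2)$. If you take \Cref{bigthm:asymptotic} together with its proof as given, $c(\tS)$ is already the good-cylinder constant, so only (b) and (c) remain. Neither appears in your plan.
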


To the best of our knowledge, this is the first such example.

Our last result is about large-genus asymptotics of Siegel--Veech constants. Let
$q$ be a prime number and consider $\qS$, a cyclic cover of degree $q$ of a
genus $2$ square-tiled surface $S$. For each fixed $q$, it is known by the work
of Veech~\cite{Veech89} that $N(\qS,L)\sim c(\qS)\pi L^2$. We show that the
Siegel--Veech constant $c(\tS)$ of the $\ZZ$-cover can be recovered as a limit
of Siegel--Veech constants as $q$ (and hence the genus of $\qS$) goes to
infinity.

\begin{bigthm}\label{bigthm:large_genus}
    Let $S$ be a genus two square-tiled surface and let $\eta\in
    H_1(S,\Sigma;\ZZ)$ be a zero holonomy homology class. Let $q$ be a prime
    number and $\qS$ the $\ZZ_q$-cover of $S$ determined by $\eta$. Then
    \[
        \frac{1}{q} c(\qS) \xrightarrow{q\to\infty} c(\tS),
    \]
    where $c(\qS)$ is the Siegel--Veech constant associated to $N(\qS,L)$ and
    $\tS$ is the $\ZZ$-cover of $S$ determined by $\eta$.
\end{bigthm}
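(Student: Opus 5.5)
We prove \Cref{bigthm:large_genus} by writing both $c(\qS)$ and $c(\tS)$ as the same weighted sum over the finitely many cylinder decompositions of $S$, with the only $q$-dependence sitting in a lifting condition on primitive vectors. The lifting condition is reduced modulo $q$, and we show that its density among primitive vectors converges to that of the exact condition over $\ZZ$ as $q\to\infty$.

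\emph{Setup.} Since $S$ is square-tiled, $\slz$ acts on it (Veech), and every periodic direction of $S$ is rational. A cylinder $C$ of $S$ with core curve $\gamma$ lifts to $\tS$ as closed cylinders exactly when $\pairing{\eta,\gamma}=0$; otherwise its preimage is a strip. In that case, up to the $\ZZ$-action, each closed cylinder upstairs projects to a unique cylinder of $S$ of the same length, so $N(\tS,L)$ counts cylinders $C$ of $S$ with $|\hol\gamma|\le L$ and $\pairing{\eta,\gamma_C}=0$. Here relative homology matters: $\gamma_C$ is absolute but $\eta$ is relative, so the pairing is the intersection pairing $H_1(S,\Sigma)\times H_1(S\setminus\Sigma)\to\ZZ$.

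For the $\ZZ_q$-cover, a cylinder $C$ of circumference $w$ lifts to cylinders of circumference $w$ (there are $q$ of them) if $\pairing{\eta,\gamma_C}\equiv 0 \pmod q$. Otherwise, since $q$ is prime, it lifts to a single cylinder of circumference $qw$. Hence
\[
N(\qS,L)= q\,\#\{C:\,|\gamma_C|\le L,\ q\mid\pairing{\eta,\gamma_C}\}+\#\{C:\,|\gamma_C|\le L/q,\ q\nmid\pairing{\eta,\gamma_C}\}.
\]
The second term contributes at most $c(S)/q^2$ to the constant, so after dividing by $q$ it is $O(q^{-3})$, which is negligible.

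\emph{Parametrize directions.} Decompose $S$ along the $\slz$-orbit: the cylinders in direction $v=(a,b)$ primitive are $g$-images of the horizontal cylinders of $g^{-1}S$, for $g\in\slz$ with $g e_1=v$. Fix orbit representatives $S_1,\dots,S_k$ with horizontal cylinders $C_{i,j}$. For each, the homology class of the image cylinder in direction $v$ depends, via the action of the affine group on $H_1$, on the coset of $g$. The pairing $\pairing{\eta,\gamma}$ thus becomes a function $f_{i,j}(g)=\pairing{\eta, g_*\gamma_{i,j}}$. Since $\eta$ has zero holonomy, $f_{i,j}$ factors through the action on the non-tautological part of homology. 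The counts become sums, over primitive $v$ in a disc of radius $\sim L/w_{i,j}$ and over the finitely many cosets of the Veech group, of indicators of $f=0$ (for $\tS$) or $f\equiv 0 \pmod q$ (for $\qS$).

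\emph{Key step and main obstacle.} We need a density statement: the proportion of $g$ (equidistributed along primitive vectors in discs, coset by coset) with $f(g)\equiv0 \pmod q$ converges to the proportion with $f(g)=0$. I expect this step to be hard, and would use the structure of \Cref{bigthm:asymptotic}'s proof, presumably that the pairing is given by a linear or quadratic form in the entries of the $\slz$ matrix on each coset, or by the action of the finite-index subgroup on the $2$-dimensional non-tautological homology for genus $2$ plus relative part. The density for $f=0$ should be $c(\tS)$ in that proof. For $f\equiv0 \pmod q$, split into $f=0$ and $f\neq0$ with $q\mid f$. For vectors of norm $\le L$, $|f|$ grows at most linearly, so $f\neq 0$ and $q\mid f$ occurs with density roughly $O(1/q)$ (equidistribution mod $q$ of primitive vectors, counted with congruence conditions via Möbius inversion as in \cref{eq:Gauss}). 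This yields $\frac1q c(\qS)=c(\tS)+O(1/q)$. The delicate point is uniformity: the limit $L\to\infty$ is taken before $q\to\infty$, so we only need, for each fixed $q$, the exact asymptotic density of $\{v: q\mid f(v),\ f(v)\ne0\}$. This is a congruence-class density, which we compute explicitly and bound by $C/q$ uniformly in $q$.
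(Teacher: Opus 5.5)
Your bookkeeping is right and matches the paper's. Up to $O(q^{-3})$, $\frac1q c(\qS)$ is the Siegel--Veech constant of $\{C : q\mid\pairing{\eta,\gamma_C}\}$, so everything hinges on showing that the cylinders with $\pairing{\eta,\gamma_C}\neq 0$ but $q\mid\pairing{\eta,\gamma_C}$ have constant $O(1/q)$. The mechanism you propose for this step does not work. Your function $f(g)=\pairing{\eta,g_*\gamma}$ equals $\pairing{\eta,\rho^{(0)}(g)\,\proj\gamma}$, where $\rho^{(0)}$ is the action of $\Gamma(S)$ on the zero-holonomy homology. This representation is not an algebraic function of the derivative $g$: in $\cH(2)$ the Kontsevich--Zorich exponent on this part is $1/3$, not $1$. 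So $f$ is not a linear or quadratic form in the matrix entries. Moreover, the residue of the holonomy vector $v$ modulo $q$, which is where M\"obius inversion and Gauss-circle equidistribution operate, does not determine $f(v)\bmod q$. The bound $|f|=O(|v|)$ is equally useless for divisibility. A priori the residues $\proj_K[\gamma_C]\bmod q$ could all lie on the line $\eta^{\perp}\bmod q$, for instance if $\rho^{(0)}(\Gamma(S))\bmod q$ sat inside a Borel subgroup fixing that line. Then $\{q\mid f\}$ would carry a fixed positive proportion of the non-good cylinders for infinitely many $q$, and the conclusion would fail.

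The missing idea is arithmetic.
\begin{enumerate}
\item The image of $\Gamma(S)$ on $K$ is a finite-index subgroup of $\slz$ (arithmeticity in genus two), hence Zariski dense.
\item By strong approximation (Matthews--Vaserstein--Weisfeiler), it surjects onto $\SL_2(\ZZ_q)$ for all large primes $q$, and $\SL_2(\ZZ_q)$ acts transitively on the $q^2-1$ nonzero vectors of $\ZZ_q^2$.
\item The finiteness of $\Gamma(S)$-orbits of cylinders gives \Cref{lemma:good-q}: for large $q$, every non-good cylinder has $\proj_K[\gamma_C]\nmodn{q}0$.
\item Siegel--Veech constants are invariant under affine maps, so all nonzero residue classes carry the same constant.
\item The condition $q\mid\pairing{\eta,\gamma_C}$ selects $q-1$ of the $q^2-1$ classes, so their total is $c(A)/(q+1)$, where $c(A)$ is the constant of all non-good cylinders.
\end{enumerate}
This is the core of the proof of \Cref{thm:large_genus_H2}, and your ``explicit congruence-class density'' cannot be computed without it.

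For $\cH(1,1)$ you must also treat the relative part. There the action is on $H_1^{(0)}(S,\Sigma;\ZZ)$, with image in $\slz\ltimes\ZZ^2$. One has to separate the split case, where there are no bad cylinders or one reduces to absolute homology, from the non-split case. The non-split case needs strong approximation for the affine group, transitivity on $H_1^{(0)}(S,\Sigma;\ZZ_q)\setminus\{0\}$, and then the ACMSS-type count. Your proposal addresses neither case.
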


The factor $1/q$ in the left hand side comes from modding out the action of $\ZZ_q$, which is mandatory for the $\ZZ$-cover in the right hand side of the limit.

\subsection{Related results}\label{sec:history}
Siegel--Veech constants are related to important problems such as the
computations of Masur--Veech volumes of strata of normalized translation
surfaces~\cite{Eskin-Masur-Zorich} and the sum of Lyapunov exponents for the
Teichmüller geodesic flow~\cite{Eskin-Kontsevich-Zorich}.

As we mentioned above, for a general translation surface, counting cylinders and
saddle connections are different problems, even though both growth rates are
believed to be quadratic. Masur proved in~\cite{Masur88,Masur90} that, for
\emph{every} translation surface $S$, there exist constants $c$ and $C$ such
that, for large enough $L$,
\[
    c \pi L^2 \le N_\text{cyl} (S,L) \le N_\text{sc} (S,L) \le C\pi L^2,
\]
where $N_\text{cyl}$ and $N_\text{sc}$ correspond to counting cylinders and
saddle connections respectively. It is still an open question whether this
result can be made into an asymptotic.

Veech proved in~\cite{Veech89} that there is a quadratic asymptotic for Veech
surfaces (such as square-tiled surfaces). A different proof of this was given by
Gutkin and Judge in~\cite{Gutkin-Judge}.

Other, different, proofs of the quadratic upper bound for both the cylinders and
saddle connections counting problems were given also by Vorobets~\cite{Vorobets}
and Eskin and Masur~\cite{Eskin-Masur}. Moreover, in~\cite{Eskin-Masur}, it was
shown that there exist constants $c_\text{cyl}$ and $c_\text{sc}$ such that, for
\emph{almost all} surfaces $S$, $N_\text{cyl} (S,L)\sim c_\text{cyl} \pi L^2$
and $N_\text{sc} (S,L)\sim c_\text{sc} \pi L^2$. Here, \enquote{almost all} is
with respect to any $\slr$-invariant probability ergodic measure, such as the
Masur--Veech measure, with the constants $c_\text{cyl}$ and $c_\text{sc}$
depending on the chosen measure.

Siegel--Veech constants were computed for Veech surfaces
in~\cite{Veech89,Vorobets,Gutkin-Judge,Schmoll} and for some non Veech surfaces
in~\cite{Eskin-Masur-Schmoll,Eskin-Marklof-Morris}. The generic ones
corresponding to connected components of strata of abelian differentials were
computed in~\cite{Eskin-Masur-Zorich}. In general, Siegel--Veech constants for
Veech surfaces are not the same one of the strata the surfaces live in.

The second named author studied the counting problem for the Ehrenfest wind-tree
model (and the Delecroix--Zorich generalization~\cite{Delecroix-Zorich})
in~\cite{Pardo:counting}, and give an effective version in~\cite{Pardo:error}.
The solution is linked to a \emph{non-varying phenomenon} as explained
in~\cite{Pardo:nonvarying}.

Recently, there has been an interest in studying large-genus asymptotics of
Siegel--Veech constants, and we see our \Cref{bigthm:large_genus} as a result
along this line of results. For more details, we refer the reader
to~\cite{Eskin-Zorich,Chen-Moller-Zagier,CMSZ,Aggarwal19,Aggarwal20,Aggarwal21}.
As we mentioned already, in~\cite{ACMSS}, the authors study a related problem to
the one we study, and obtain several deep results on Siegel--Veech constants for
finite branched cyclic covers of generic surfaces.

\subsection{A sketch of the proofs}\label{sec:sketch} 

We are concerned with counting (maximal cylinders of) closed geodesics in
$\ZZ$-covers of genus~$2$ square-tiled surfaces. Any closed geodesic can be
$\ZZ$-translated to obtain a new (non-isotopic) closed geodesic. Thus, we count
(maximal cylinders of) closed geodesics of bounded length in the cover $\tS$, up
to $\ZZ$-translations. 

\begin{figure}[tbhp]
    \centering
    \begin{tikzpicture}[mid arrow/.style={
			   postaction={decorate,decoration={
				   markings,
				   mark=at position .5 with {\arrow{latex}}
		   }}}]

        
        
        \foreach \level in {0, 1, 2, 3} {
            \pgfmathsetmacro{\y}{\level}
            \pgfmathsetmacro{\x}{1+\level}
            
            \draw (\x, \y) rectangle ++(1,1);
            \draw (\x+1, \y) rectangle ++(1,1);
            
            \draw[thick] (\x, \y) rectangle ++(2,1);
        }

        \fill (5.2, 4.2) circle (1.5pt);
        \fill (5.5, 4.5) circle (1.5pt);
        \fill (5.8, 4.8) circle (1.5pt);

        \fill (1.9, -0.2) circle (1.5pt);
        \fill (1.6, -0.5) circle (1.5pt);
        \fill (1.3, -0.8) circle (1.5pt);

        \node[bb-bull] at (1,0) {};
        \node[bb-bull] at (3,0) {};
        \node[bb-bull] at (3,2) {};
        \node[bb-bull] at (5,2) {};
        \node[bb-bull] at (5,4) {};
        \node[v-bull] at (2,1) {};
        \node[v-bull] at (4,1) {};
        \node[v-bull] at (4,3) {};
        \node[v-bull] at (6,3) {};
        \node[b-bull] at (2,0) {};
        \node[b-bull] at (2,2) {};
        \node[b-bull] at (4,2) {};
        \node[b-bull] at (4,4) {};
        \node[b-bull] at (6,4) {};
        \node[g-bull] at (1,1) {};
        \node[g-bull] at (3,1) {};
        \node[g-bull] at (3,3) {};
        \node[g-bull] at (5,3) {};
        
        \draw[-Stealth, very thick] (6,1.5) -- (8.5,1.5) node[midway,above] {$\ZZ$};

            \foreach \i in {9,...,10}
                \draw (\i,1) rectangle ++(1,1);

        \draw[OISkyBlue, mid arrow, very thick] (10,1) -- (9,1) node[midway,below] {$\eta_A$};
        \draw[OISkyBlue, mid arrow, very thick] (11,2) -- (10,2) node[midway,above] {$\eta_A$};
        \draw[OIOrange, mid arrow, very thick] (9,2) -- (10,2) node[midway,above] {$\eta_B$};
        \draw[OIOrange, mid arrow, very thick] (10,1) -- (11,1) node[midway,below] {$\eta_B$};
        
        \node[k-bull] at (9,1) {};
        \node[o-bull] at (10,1) {};
        \node[k-bull] at (11,1) {};
        \node[o-bull] at (9,2) {};
        \node[k-bull] at (10,2) {};
        \node[o-bull] at (11,2) {};

        \node at (11.5,1.5) {{\large $S_{2,0}$}};
        \node at (1.5,2.5) {{\large $\tS_{2,0}$}};
    \end{tikzpicture}
    \caption{The $\ZZ$-cover $\tS_{2,0}$ of $S_{2,0}$ defined by the zero holonomy
        curve $\eta = \eta_A + \eta_B$. The colored dots on the covering surface
        represent the four infinite conical angle singularities, whereas the two 
        on $S_{2,0}$ are marked points.}
    \label{fig:staircase_cover}
\end{figure}

Since the algebraic intersection form $\pairing{\cdot,\cdot}$ is non-degenerate
between $H_1(S, \Sigma; \ZZ)$ and $H_1(S \setminus \Sigma; \ZZ)$, every such
cover can be described by a primitive element $\eta$ in the group of relative
homology $H_1(S, \Sigma; \ZZ)$, see \Cref{fig:staircase_cover} for an example
and~\cite{Hooper-Weiss,DHV} and \Cref{sec:Zcovers} for details. We recall that
the \emph{holonomy} of a curve $\eta$ is given by developing $\eta$ in charts
and taking the difference of its endpoints. We denote this by $\hol(\eta)$. We
ask for the so-called \emph{no-drift condition}: $\hol(\eta) = 0$. This is
motivated by dynamical reasons, see,
e.g.,~\cite{Hooper-Weiss,Artigiani,Avila-Hubert}. In the literature,
$\ZZ$-covers defined by homology classes of zero holonomy are also called
\emph{recurrent $\ZZ$-covers}.

Cylinders in $\tS$ project down to cylinders in $S$. However, cylinders in $S$
lift to cylinders in $\tS$ if and only if the homology class of its core curve
has zero intersection with the relative cycle $\eta$ defining the cover.
Otherwise, they lift to infinite strips.

Let $K$ be smallest subspace of $H_1 (S, \Sigma; \ZZ)$ that contains $\eta$ and
that is invariant under affine transformations of $S$. The cylinders $C$ with
core curve $\gamma_C$ that have zero intersection with $\eta$ split naturally
into two families:
\begin{enumerate}
    \item if $\pairing{\gamma_C, \kappa} = 0$ for all $\kappa\in K$, we say that
    $C$ is a \emph{good cylinder}.
    \item if $\pairing{\gamma_C, \eta} = 0$, but $C$ is not good, we naturally
    call $C$ a \emph{bad cylinder}.
\end{enumerate}

Historically, the notion of good cylinders was first introduced by Avila and
Hubert in~\cite{Avila-Hubert} in order to give a geometric criterion for
recurrence of $\ZZ^d$-periodic translations surfaces. 

If the base surface is in $\cH(2)$, there is no relative homology and previous
work by the second named author in~\cite{Pardo:counting} implies that bad
cylinders are asymptotically negligible. Hence, one of the contributions of this
paper is to study the case when the base surface is in $\cH(1,1)$. In this case,
the presence of relative homology prevents us to use symplecticity as
in~\cite{Pardo:counting}, so we use a new strategy using the arithmeticity of
the Kontsevich--Zorich monodromies in genus $2$~\cite{Betal} and some ideas on
finite cyclic covers as in~\cite{ACMSS}.

Since bad cylinders are negligible, we are reduced to counting good cylinders.
As a consequence of works of Veech~\cite{Veech89,Veech92} and Eskin and
Masur~\cite{Eskin-Masur}, good cylinders have quadratic asymptotic. It remains
to compute the associated Siegel--Veech constants. For $S\in\cH(2)$, we exploit
previous work of Lelièvre~\cite{Lelievre}, and Lelièvre and
Royer~\cite{Lelievre-Royer}. The case $S\in\cH(1,1)$ is more delicate, and
relays on the \emph{parity condition}, see~\cite{Duryev}, hence we do not
compute them in full generality.

Let us now briefly explain how to prove \Cref{bigthm:large_genus}. We now
consider intermediate $\ZZ_q$-covers. In this case, we still have good and bad
cylinders as before, but we also need to consider cylinders $C$ whose core curve
$\gamma_C$ has a non-zero intersection, modulo $q$, with $\eta$. Geometrically,
these last ones lift on $\qS$ to a single cylinder which is $q$ times longer.
So, we call them \emph{strips modulo $q$}. The Siegel--Veech constant for
cylinders in $\qS$ is then obtained as a combination of the constants
corresponding to good and bad cylinders and to strips modulo $q$. However, in
this case the bad cylinders and the strips modulo $q$ are \emph{not} negligible.
Since we only care about the asymptotic as $q\to\infty$, we estimate the
\emph{not good} cylinders in terms of a Siegel--Veech constant of the base
surface $S$, and show that, with the appropriate normalizations, these cylinders
become negligible in the limit.

\subsection{Organization of the paper}
We collect all the preliminaries about translation and square-tiled surfaces and
their $\ZZ$-covers in \Cref{sec:preliminaries}. In particular, we describe the
Hubert--Weitze-Schmithüsen  staircases, briefly introduce Siegel--Veech
constants and explain how to compute them in our cases.

We deal with the case when the base surface is in $\cH(2)$ in \Cref{sec:H2}. We
then prove \Cref{bigthm:large_genus} for the case when the base surface is in
$\cH(2)$ in \Cref{sec:q-covers}. Then, we deal with the more delicate case of
$\cH(1,1)$, proving that bad cylinders are sub-quadratic, completing the proof
of \Cref{bigthm:large_genus} and proving \Cref{bigthm:subquadratically} in
\Cref{sec:H11}. Finally, to complete the study of the whole
Hubert--Weitze-Schmithüsen staircases family, we study the simpler genus~$1$
case, when the base surface is $S_{2,0}\in\cH(0,0)$ in \Cref{sec:H00}.

\subsection*{Acknowledgements}
We thank Casa Matemática Oaxaca and the organizers of the workshop
\enquote{Blooming Beasts: A Conference on the Geometry, Topology, and Dynamics
of Infinite-Type Surfaces} where this work began. A.P. was supported by ANID
through FONDECYT Regular grant 1261332. 

\section{Preliminaries}\label{sec:preliminaries}

\subsection{Translation surfaces}\label{sec:translation_surfs}

A compact translation surface is a pair $(S,\omega)$, where $S$ is a compact
Riemann surface and $\omega$ is a (not identically zero) holomorphic $1$-form,
usually called an abelian differential. Local integration of the $1$-form gives
an atlas of charts in which, away from the zeros of the form, the change of
coordinates are translations. General references for compact and non-compact
translation surfaces are, respectively,~\cite{AthreyaMasur, DHV}. 

Let $\Sigma\subseteq S$ be the set of zeros of $\omega$. Points in $\Sigma$
correspond conical singularities of conical angle $2(k_\sigma+1)\pi$, where
$k_\sigma\in\NN$ is the order of the zero of $\omega$ at the point
$\sigma\in\Sigma$. If the surface $S$ has genus $g$, we have a Gauss-Bonnet
formula:
\begin{equation}\label{eq:GaussBonnet}    
    \sum_{\sigma\in\Sigma} k_\sigma = 2g - 2.
\end{equation}

The moduli space of translation surfaces is naturally stratified by $\cH(k_1,
\dots, k_n) = \cH(\underline{k})$, where $k_i\in\NN$ satisfy
\cref{eq:GaussBonnet} above. Strata are not necessarily connected, however they
have at most three connected components, and their components have been
completely classified by Kontsevich and Zorich~\cite{KontsevichZorich}. In
particular, the strata we will consider ($\cH(0,0)$, $\cH(1,1)$ and $\cH(2)$)
are all connected.

Let $\cH_1(\underline{k})$ the ``unit hyperboloid'' formed by surfaces of area
$1$ inside $\cH(\underline{k})$. There is a natural action by the group $\slr$
on translation surfaces, given by post-composition in charts by matrices inside
$\slr$. This action preserves each connected component of the strata
$\cH_1(k_1,\dotsc,k_n)$. The \emph{Veech group} of a surface $(S,\omega)$ is the
stabilizer under this action. We will denote this group by $\Gamma(S)$.
Equivalently, let $\Aff(S)$ be the group of orientation preserving affine diffeomorphisms of $S$, that is the homeomorphisms which fix pointwise the singularities in $\Sigma$ and that are affine in the flat coordinates. If $D\colon\Aff(S)\to\slr$ denotes the derivative, then $\Gamma(S) = D(\Aff(S))$.

There is a natural invariant ergodic measure for the action of $\slr$ on
$\cH(\underline{k})$, which is called the \emph{Masur--Veech measure}. It is in
the Lebesgue class using the natural orbifold structure of $\cH(\underline{k})$
defined by period coordinates. If we restrict to surfaces of a fixed area, then
this measure is finite.

A \emph{saddle connection} is a geodesic connecting two, not necessarily
distinct, singularities of $(S,\omega)$. To a saddle connection $\gamma$ we can
associate its holonomy vector $\hol(\gamma)\in\CC$ by developing the geodesic in
the plane and taking the difference of the endpoints. Equivalently,
\[
    \hol(\gamma) =  \int_\gamma \omega.
\]
Holonomy vectors form a discrete subset of $\CC$, which is usually not a
lattice, see~\cite{Hubert-Schmidt:Veech}.

A \emph{cylinder} on a translation surface is an isometrically embedded copy of
$\RR/w\ZZ\times\intcc{0}{h}$ for some $h$ and $w$ in $\RR$, called respectively
the height and width of the cylinder. A maximal cylinder is bounded above and
below by saddle connections. Cylinders are foliated by closed geodesics,
isotopic to each other. Each of them is called a core curve of the cylinder.
Conversely, given a closed geodesic, we can enlarge it to obtain a maximal
cylinder of which the geodesic is a core curve.

\subsection{Square-tiled surfaces}\label{sec:square-tiled_surfs}
A translation surface $(S,\omega)$ is a \emph{square-tiled surface} if it is a
cover of the square torus, ramified only over the origin, and the cover is a
translation in the local coordinates. Equivalently, if $\pi\colon S \to\TT^2$ is
the covering map, we have that $\pi^*(\omega_0) = \omega$, where $\omega_0$ is
the $1$-form induced by $dz$ on $\TT^2$. Since the $1$-form is determined by the
covering, we will not write it.

A square-tiled surface $S$ is \emph{primitive} if there is no intermediate cover
between $S$ and $\TT^2$. Equivalently, the holonomy vectors span exactly
$\ZZ^2$. 

A \emph{square} of $S$ is a connected component of $\pi^{-1}(\intoo{0}{1}^2)$.
The number of squares of $S$ is exactly the degree $d$ of the covering. By
enumerating in any way the squares, we obtain two permutations of the set
$\Set{1,\dotsc, d}$: $\sigma_h$ and $\sigma_v$. By definition, $\sigma_h(i) = j$
(resp.\ $\sigma_v(i) = j$) if the right (resp.\ top) side of the $i^\text{th}$
square is identified to the left (resp.\ bottom) one of the $j^\text{th}$ square
(using the orientation induced by the covering), see \Cref{fig:case00} on
\cpageref{fig:case00} and the rest of the figures in this Section. We remark
that one can recover the square-tiled surface simply by the permutations.

\subsection{\texorpdfstring{$\ZZ$}{Z}-covers of translation
surfaces}\label{sec:Zcovers} Let $S$ be a compact translation surface, and
$\Sigma$ the set of its conical singularities. A $\ZZ$-cover of the surface
$S\setminus\Sigma$ is defined by the kernel of a surjective homomorphism
$\phi\colon\pi_1(S\setminus\Sigma)\to\ZZ$. Since $\ZZ$ is abelian, the map
$\phi$ factors through the first homology group $H_1(S\setminus\Sigma;\ZZ)$ and
can be represented by a cohomology class in $H^1(S\setminus\Sigma;\ZZ)$. Thanks
to the fact that the algebraic intersection form 
\[
    \pairing{\cdot,\cdot}\colon H_1(S\setminus\Sigma;\ZZ)\times H_1(S, \Sigma;\ZZ) \to\ZZ
\]
is non degenerate, there exists a non-zero curve $\eta\in H_1(S,\Sigma;\ZZ)$
which defines the covering, in the sense that $\phi$ map corresponds to the
homomorphism
\[
\begin{split}
    \phi_\eta \colon \pi_1(S\setminus\Sigma) &\to \ZZ\\
                    \gamma &\mapsto \pairing{[\gamma], \eta},
\end{split}
\]
where $[\gamma]$ is the class of $\gamma$ inside $H_1(S\setminus\Sigma;\ZZ)$.
For more details, see~\cite[Section~2.2]{DHV} or~\cite[Section~3]{Hooper-Weiss}.

We will always assume the \emph{no-drift} condition, meaning that the curve
$\eta$ satisfies $\hol(\gamma)=0$. In the context of $\ZZ$-covers, this class of
covers are also called recurrent $\ZZ$-covers, see~\cite{Hooper-Weiss}. We will
denote the zero holonomy part of the homology by $H_1^{(0)} (S,\Sigma;\ZZ)$.

We remark that, if the surface $S$ has genus $2$, then
\begin{equation}\label{eq:homology_splitting}
    H_1(S, \Sigma;\ZZ) = H_1^{\text{st}}(S;\ZZ) \oplus H_1^{(0)}(S, \Sigma;\ZZ),
\end{equation}
where $H_1^{\text{st}}(S;\ZZ)$ is the stable part, also called the
\emph{tautological plane}, generated by the duals of the real and imaginary part
of the abelian differential $\omega$ defining the square-tiled surface. A
similar splitting also holds for the absolute homology.

The above description implies that a cylinder on the surface $S$ with core curve
$\gamma$ lifts to a cylinder in a $\ZZ$-cover defined by the curve $\eta$ if and
only if
\[
    \pairing{\gamma,\eta} = 0.
\]
Otherwise, it lifts to an \emph{infinite strip}, meaning an isometrically
embedded copy of $\RR\times\intoo{a}{b}$ for some $a,b\in\RR$.

\begin{definition}[Good and bad cylinders]
    Let $S$  be a compact square-tiled surface and let $\eta\in H_1^{(0)}
    (S,\Sigma;\ZZ) \setminus \{0\}$. Let $K$ be the smallest $\Gamma(S)$-invariant subspace of $H_1^{(0)}(S,\Sigma;\ZZ)$ that contains $\eta$. Then, cylinders $C$ with core curve $\gamma_C$ that have zero
    intersection with $\eta$ naturally split into two families:
    \begin{enumerate}
        \item if $\pairing{\gamma_C, \kappa} = 0$ for all $\kappa\in K$, that is
        $\gamma_C \in\ann(K)$, we say that $C$ is a \emph{good cylinder}.
        \item if $\pairing{\gamma_C, \eta} = 0$, but $C$ is not good, we naturally
        call $C$ a \emph{bad cylinder}.
    \end{enumerate}
\end{definition}

In the previous Definition, if $S\in\cH(2)$, then $K = H_1^{(0)}(S,\Sigma;\ZZ)$.
However, if $S\in\cH(1,1)$ it may be a strictly smaller subspace, see
\Cref{sec:H11} for more details.

We conclude this section by showing that, in most cases, a cylinder is good if
and only if the homology class of its core curve lies in the tautological plane.
In these cases, good cylinders correspond to one-cylinder decompositions of $S$.
In fact, we have the following.

\begin{lemma}\label{lemma:good_are_one-cyl} 
    Let $S$ be a square-tiled surface of genus $2$ and let $\eta$ be a zero
    holonomy curve. Assume that $K = H_1^{(0)}(S,\Sigma;\ZZ)$ is the smallest
    $\Gamma(S)$-invariant subspace of $H_1^{(0)}(S,\Sigma;\ZZ)$ containing
    $\eta$. Then, a cylinder is good if and only if it corresponds to a
    one-cylinder decomposition of $S$.
\end{lemma}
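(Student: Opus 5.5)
Under the hypothesis $K = H_1^{(0)}(S,\Sigma;\ZZ)$, a cylinder $C$ is good exactly when $\gamma_C \in \ann(H_1^{(0)}(S,\Sigma;\ZZ))$. So the plan is first to identify this annihilator inside $H_1(S\setminus\Sigma;\ZZ)$ (or its image in absolute homology), and then to translate membership in it into a geometric statement about the cylinder decomposition in the direction of $C$.

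\textbf{Step 1: the annihilator is the tautological plane.} I would use the splitting \eqref{eq:homology_splitting} $H_1(S,\Sigma;\ZZ)=H_1^{\text{st}}\oplus H_1^{(0)}(S,\Sigma;\ZZ)$ together with the nondegeneracy of the intersection pairing. By definition, $H_1^{(0)}$ is the kernel of $\hol$. The functional $\hol$ pairs with a class $\beta$ as $\int_\beta\omega$, and $\omega$ is Poincaré dual, via the intersection pairing, to the classes in $H_1^{\text{st}}$. Hence a closed curve $\gamma$ has $\pairing{\gamma,\kappa}=0$ for all zero-holonomy $\kappa$ if and only if the functional $\pairing{\gamma,\cdot}$ factors through $\hol$. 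That is, $\gamma$ is (rationally) a combination of the duals of $\operatorname{Re}\omega$ and $\operatorname{Im}\omega$, which means $[\gamma]\in H_1^{\text{st}}(S;\QQ)$.

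One caveat here: a class in $H_1(S\setminus\Sigma)$ pairs with relative classes. Loops around punctures pair nontrivially with relative cycles joining the two zeros in $\cH(1,1)$, and such cycles can have zero holonomy. So I must check that core curves of cylinders, which are simple closed curves avoiding $\Sigma$, give the right conclusion. The condition $\gamma\in\ann(H_1^{(0)}(S,\Sigma))$ forces both of the following:
\begin{enumerate}
\item the absolute part of $[\gamma]$ lies in $H_1^{\text{st}}$;
\item $\gamma$ has zero intersection with every zero-holonomy relative cycle.
\end{enumerate}

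\textbf{Step 2: geometric translation.} A cylinder $C$ of direction $v$ (rational, since $S$ is square-tiled) has a core class $[\gamma_C]$ that is a positive multiple of... (to be computed). The tautological plane is the span of the classes $h=\sum$ (horizontal core curves weighted by their heights?). More precisely, the Poincaré dual of $\operatorname{Im}\omega$ is $\sum_i h_i[\gamma_i]$, summed over the horizontal cylinders $C_i$ with heights $h_i$. Applying the Veech group, I may assume $v$ is horizontal. Since $\Gamma(S)$ preserves $K$, and $K$ is the whole space, goodness is $\Aff(S)$-invariant.

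The two directions of the equivalence then go as follows.
\begin{itemize}
\item If the horizontal direction is a one-cylinder decomposition, then $[\gamma_C]$ is a multiple of the dual of $\operatorname{Im}\omega$. So $\pairing{\gamma_C,\kappa}=\int_\kappa \operatorname{Im}\omega\,/h=0$ for every zero-holonomy $\kappa$, which makes $C$ good. This also handles the relative part, since the pairing with $\operatorname{Im}\omega$ is defined on relative cycles.
\item Conversely, suppose there are $k\ge2$ horizontal cylinders. Then $[\gamma_C]$ is not proportional to $\sum h_i[\gamma_i]$. I would produce an explicit zero-holonomy relative cycle $\kappa$ with $\pairing{\gamma_C,\kappa}\neq0$: take a vertical-ish path crossing $C$ once, and subtract a suitable multiple of a path crossing the other cylinders so that the total holonomy vanishes. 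For example, use $\kappa=h_j\delta_i-h_i\delta_j$, where $\delta_i$ is a saddle connection or arc crossing $C_i$ from bottom to top. Then $\pairing{\gamma_i,\kappa}=\pm h_j\neq0$, and $\hol(\kappa)$ can be made zero after adding a horizontal correction along saddle connections, which does not change the intersections with the core curves.
\end{itemize}

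\textbf{Main obstacle.} The delicate point is constructing $\delta_i$ as relative cycles whose horizontal holonomy can be cancelled by horizontal saddle connections, and checking in $\cH(1,1)$ that this is possible. Horizontal saddle connections lie on cylinder boundaries and contribute zero intersection with core curves, and their lengths generate the needed horizontal components since $S$ is square-tiled. With integer lengths, I would scale $\kappa$ by an integer to clear denominators.
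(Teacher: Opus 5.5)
Your proof is correct, and it takes a different route from the paper's.

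\textbf{The paper's route.} The paper works with homology classes of core curves. It uses the hyperelliptic involution and a cut-and-connectedness argument to show that two distinct cylinders in the same direction never have homologous core curves. Implicitly, it concludes that a single core class cannot be proportional to the tautological class $\sum_i h_i\gamma_i$.

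\textbf{Your route.} You test goodness directly against explicit relative cycles.
\begin{itemize}
\item For $k\ge 2$ cylinders, take $s$ a horizontal saddle connection of length $\ell_s$ and $x=\operatorname{Re}\hol(h_j\delta_i-h_i\delta_j)\in\ZZ$. Then $\kappa=\ell_s(h_j\delta_i-h_i\delta_j)-x\,s$ lies in $H_1^{(0)}(S,\Sigma;\ZZ)$ and satisfies $\pairing{\gamma_i,\kappa}=\pm\ell_s h_j\neq 0$.
\item For one cylinder, all of $\Sigma$ lies on the cylinder boundary. Hence $\int_\kappa\operatorname{Im}\omega=\pm h\,\pairing{\gamma_C,\kappa}$ for every relative $\kappa$.
\end{itemize}

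\textbf{What each approach buys.} Your argument is elementary and never uses genus $2$ or hyperellipticity. It shows that good means one-cylinder for any square-tiled surface once $K$ is all of $H_1^{(0)}(S,\Sigma;\ZZ)$. It also handles relative homology automatically, and this is not cosmetic. In a three-cylinder direction in $\cH(1,1)$, the big cylinder's core curve satisfies $\gamma_3=\gamma_1+\gamma_2$ in $H_1(S;\ZZ)$, where $\gamma_1,\gamma_2$ are the core curves of the two small cylinders. If the two small cylinders have equal heights, then $\gamma_3$ is proportional to $\sum_i h_i\gamma_i$ in absolute homology. So $\gamma_3$ pairs trivially with every \emph{absolute} zero-holonomy class, even though the three core curves are pairwise non-homologous. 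What rules out goodness there is a zero-holonomy cycle with nonzero boundary, and that is exactly what your $\kappa$ is in this case. The paper's route ties goodness to the tautological plane, which is conceptually transparent. To cover three-cylinder directions, however, it would need linear independence of the core curves in $H_1(S\setminus\Sigma;\QQ)$, not just pairwise non-homology.

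\textbf{Small repairs.}
\begin{itemize}
\item $\Gamma(S)$ usually has several cusps, so it cannot bring every rational direction to the horizontal. Use any $g\in\slr$ instead: it changes $\omega$ but not $\ker\hol$, the cylinders, or the intersection form. Alternatively, run the argument in direction $v$ using the component of $\hol$ transverse to $v$.
\item Drop the unproved sentence that $[\gamma_C]$ is not proportional to $\sum_i h_i[\gamma_i]$. Read in $H_1(S;\ZZ)$ it is false in the case just described, and your explicit $\kappa$ makes it unnecessary. Likewise, Step~1 is only right with the tautological classes taken in $H_1(S\setminus\Sigma;\QQ)$, and you do not need it.
\item Your ``main obstacle'' is not one. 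A single horizontal saddle connection suffices for the correction, since all periods are integers.
\end{itemize}
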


\begin{proof}
    We begin by recalling that every stratum in genus $2$ is connected and
    coincides with the hyperelliptic component. Hence, the hyperelliptic
    involution preserves each cylinder in a cylinder decomposition.

    Let us show that we cannot have two cylinders in $S$ whose core curve are
    homologous. Let $C_1$ and $C_2$ two such cylinders. Since $S$ is connected,
    the two cylinders are glued to each other either on the top or the bottom
    side. By the above remark, the hyperelliptic involution preserves the two
    cylinders. Hence, $C_1$ and $C_2$ are glue both along their top and bottom
    sides. Then, cutting along the core curves of the cylinders leaves the
    surface $S$ connected, contractading the fact that the core curves were
    homologous.
\end{proof}

\subsection{An example of \texorpdfstring{$\ZZ$}{Z}-covers: Hubert--Weitze-Schmithüsen  staircases}\label{sec:HSstaircases}

\begin{figure}[tbh]
    \centering
    \begin{tikzpicture}
        \foreach \i in {0,...,3}
            \draw (\i,0) rectangle ++(1,1);
        \foreach \i in {1,...,3}
            \draw (3,\i) rectangle ++(1,1);

        \node at (0.5,0.5) {$1$};
        \node at (1.5,0.5) {$\cdots$};
        \node at (2.5,0.5) {$\cdots$};
        \node at (3.5,0.5) {$a$};
        \node at (3.5,1.5) {$a+1$};
        \node at (3.5,2.5) {$\vdots$};
        \node at (3.5,3.5) {$a+b$};
	
        \draw[very thick] (0,0) -- ++(1,0) node[midway,below] {$A$};
        \draw[very thick] (3,4) -- ++(1,0) node[midway,above] {$A$};
        \draw[very thick] (1,1) -- ++(-1,0) node[midway,above] {$B$};
        \draw[very thick] (4,0) -- ++(-1,0) node[midway,below] {$B$};
        
        \node[v-bull] at (0,0) {}; 
        \node[v-bull] at (4,0) {};
        \node[v-bull] at (1,1) {};
        \node[v-bull] at (1,0) {};
        \node[v-bull] at (4,4) {};
        \node[v-bull] at (3,4) {};

        \node[b-bull] at (0,1) {};
        \node[b-bull] at (4,1) {};
        \node[b-bull] at (3,1) {};
        \node[b-bull] at (3,0) {};
    \end{tikzpicture}
    \caption{The square-tiled surface $S_{a,b}\in\cH(1,1)$, an example of
    Case~$(1,1)$. Opposite unlabeled sides are identified
    (cf.~\cite[Figure~1]{Hubert-Schmithusen}).}
    \label{fig:Sab}
\end{figure}

In particular, \Cref{bigthm:asymptotic,bigthm:large_genus} give quadratic
asymptotics for the counting problem on an important family of examples:
Hubert--Weitze-Schmithüsen  staircases~\cite{Hubert-Schmithusen}. These are a family of
$\ZZ$-periodic square-tiled surfaces defined as follows. Let $a \geq 2$, $b \geq
0$ be two integers and define the (compact) square-tiled surface $S_{a,b}$ given
by the permutations
\[
	\sigma_h = (1\, 2\, \cdots\, a)
	\qquad\text{and}\qquad
	\sigma_v = (1\, a\, a+1\, \cdots\, a+b),
\]
see \Cref{fig:Sab,fig:case00,fig:cases2}. 

The genus of $S_{a,b}$ is $2$ unless $(a,b) = (2,0)$, in which case the genus is
one. More precisely, there are three cases, as follows.
\begin{description}
    \item[Case $(0,0)$] If $a = 2$ and $b = 0$, then $S_{a,b} \in \cH(0,0)$,
    as in \Cref{fig:case00}.
    \item[Case $(1,1)$] If $a \geq 3$ and $b \geq 1$, then $S_{a,b} \in
    \cH(1,1)$, as in \Cref{fig:Sab}.
    \item[Case $(2)$] If $a = 2$ and $b \geq 1$, as in \Cref{fig:case2_1}, or $a
    \geq 3$ and $b = 0$, then $S_{a,b} \in \cH(2)$, as in \Cref{fig:case2_2}.
\end{description}

\begin{figure}[tbh]
    \centering
    \begin{subfigure}[b]{.48\textwidth}
    \centering
    \begin{tikzpicture}[scale=.85]
        \foreach \i in {2,...,3}
            \draw (\i,0) rectangle ++(1,1);
        \foreach \i in {1,...,3}
            \draw (3,\i) rectangle ++(1,1);

        \node at (2.5,0.5) {$1$};
        \node at (3.5,0.5) {$2$};
        \node at (3.5,1.5) {$3$};
        \node at (3.5,2.5) {$\vdots$};
        \node at (3.5,3.5) {$2+b$};
        
        \draw[very thick] (2,0) -- ++(1,0) node[midway,below] {$A$};
        \draw[very thick] (3,4) -- ++(1,0) node[midway,above] {$A$};
        \draw[very thick] (3,1) -- ++(-1,0) node[midway,above] {$B$};
        \draw[very thick] (4,0) -- ++(-1,0) node[midway,below] {$B$};
	
        \node[v-bull] at (2,0) {};
        \node[v-bull] at (4,0) {};
        \node[v-bull] at (3,1) {};
        \node[v-bull] at (4,1) {};
        \node[v-bull] at (2,1) {};
        \node[v-bull] at (3,0) {};
        \node[v-bull] at (4,4) {};
        \node[v-bull] at (3,4) {};
    \end{tikzpicture}
    \caption{$a=2$, $b\ge 1$.}
    \label{fig:case2_1}
    \end{subfigure}%
    \hfill %
    \begin{subfigure}[b]{.48\textwidth}
    \centering
    \begin{tikzpicture}[scale=.85]
        \foreach \i in {0,...,3}
            \draw (\i,0) rectangle ++(1,1);

        \node at (0.5,0.5) {$1$};
        \node at (1.5,0.5) {$\cdots$};
        \node at (2.5,0.5) {$\cdots$};
        \node at (3.5,0.5) {$a$};
	
        \draw[very thick] (0,0) -- ++(1,0) node[midway,below] {$A$};
        \draw[very thick] (3,1) -- ++(1,0) node[midway,above] {$A$};
        \draw[very thick] (1,1) -- ++(-1,0) node[midway,above] {$B$};
        \draw[very thick] (4,0) -- ++(-1,0) node[midway,below] {$B$};
        
        \node[v-bull] at (0,1) {};
        \node[v-bull] at (4,1) {};
        \node[v-bull] at (1,0) {};
        \node[v-bull] at (1,1) {};
        \node[v-bull] at (4,0) {};
        \node[v-bull] at (0,0) {};
        \node[v-bull] at (3,1) {};
        \node[v-bull] at (3,0) {};
    \end{tikzpicture}
    \caption{$a\ge 3$, $b=0$.}
    \label{fig:case2_2}
    \end{subfigure}
\caption{Case $(2)$, when $S_{a,b}\in\cH(2)$.}
\label{fig:cases2}
\end{figure}

The infinite staircase $\tS = \tS_{a,b}$ is a ramified $\ZZ$-cover over $S =
S_{a,b}$, which is ramified over the set $\Sigma$ of singularities (or marked
points in the Case~$(0,0)$) of $S$.

\subsection{Siegel--Veech constants}\label{sec:SV}
Let $S\in\cH_1(\underline{k})$. Denote by $\cC$ the connected component of
$\cH_1(\underline{k})$ to which $S$ belongs to. Let $\sC(S)$ be the collection
of holonomy vectors that correspond to cylinders on $S$, that is
\[
    \sC(S) = \Set{\hol(\gamma_C)\given C \text{ is a cylinder in } S}.
\]
Then, we are interested in the asymptotic of $N(S,L) = \# (B_L(0) \cap \sC(S))$,
where $B_L(0)$ is the ball of radius $L$ inside of $\RR^2 \cong \CC$.
    
Given a continuous and compactly supported function $f\in C_c(\RR^2)$ we define
the \emph{Siegel--Veech transform} $\hat{f}\colon\cC\to\RR$ of $f$ as 
\[
    \hat{f}(S) = \sum_{\gamma\in\sC(S)} f(\gamma).
\]
Veech showed in~\cite{Veech98}, since $f\in C_c(\RR^2)$, then $\hat{f}$ is in
$L^1(\mu)$ with respect to the Masur--Veech measure $\mu$. 

Then, the \emph{Siegel--Veech formula} proved by Veech in~\cite{Veech98} says
that
\[
    \frac{1}{\vol(\cC)}\int_\cC \hat{f}(S) \, d\mu(S) = c(\cC) \int_{\RR^2} f,
\]
where $c(\cC)$ is called the Siegel--Veech constant (of the counting problem) and
only depends on the connected component (and on the measure $\mu$). Moreover,
Eskin and Masur showed~\cite{Eskin-Masur} that
\[
    \lim_{L\to\infty} \frac{N(S,L)}{\pi L^2} = c(\cC).
\]

\begin{remark}
    Traditionally, Siegel--Veech constants are defined for area one surfaces.
    However, in the case of square-tiled surfaces is often more convenient to normalize
    the area of each square to be one, so that the area of the surface is
    exactly the number of square in it. This implies that one needs to rescale
    the surface and the Siegel--Veech constant accordingly. This is especially relevant in \Cref{sec:q-covers}.
\end{remark}

We conclude this section by giving a formula that allows to compute the
Siegel--Veech constants related to the counting of cylinders in a one-cylinder
composition in a square-tiled surface. This is relevant to our purposes, as we
will show later on that bad cylinders are negligible in the counting and that
good cylinders correspond precisely to one-cylinders.

The following is an adaptation of~\cite[Lemma~2]{Lelievre}, which is based
on~\cite[Proposition~3.10]{Veech89}.

\begin{lemma}\label{lemma:SV_1cyl_cusps}
    The Siegel--Veech constant associated with the counting problem of cylinders
    in a one-cylinder decomposition of a \emph{unit area} primitive square-tiled
    surface $S$ is
    \[
        \frac{1}{2\zeta(2)}\frac{\# \text{ one-cyl cusps of } \slz\cdot S}{\#\,\slz\cdot S}.
    \]
\end{lemma}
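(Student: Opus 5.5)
The plan is to reduce the count on $S$ to a count of cusps of the Veech group, following Veech's cusp decomposition \cite[Proposition~3.10]{Veech89}. We work with the normalization in which each square has area one and rescale only at the end.

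\textbf{Orbifold structure.} Since $S$ is primitive, its Veech group $\Gamma(S)$ is a finite index subgroup of $\slz$. Its index is $n = \#\,\slz\cdot S$. The cusps of $\Gamma(S)\backslash\mathbb{H}$ correspond bijectively to the $\Gamma(S)$-orbits of rational directions. Equivalently, they correspond to the $\slz$-orbit points $S'\in\slz\cdot S$ together with their horizontal cylinder decompositions, counted according to the usual orbit-stabilizer bookkeeping. Concretely, the orbit of $S$ under the parabolic subgroup $\pairing{T}$, with $T$ the horizontal unipotent matrix, partitions $\slz\cdot S$ into $\pairing{T}$-orbits. Each such orbit is one cusp, and its width equals its cardinality. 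A cusp is \enquote{one-cylinder} if the corresponding direction decomposes $S$ into a single cylinder.

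\textbf{Counting in a single cusp.} Fix a cusp with direction $v$. The parabolic stabilizer of $v$ in $\Gamma(S)$ is conjugate in $\slz$ to $\pairing{T^{w}}$, where $w$ is the cusp width. The directions in the $\Gamma(S)$-orbit of $v$ are therefore parametrized by $\Gamma(S)/\pairing{\pm T^{w}}$. For each such direction, the one-cylinder decomposition contributes exactly one cylinder. Its holonomy is the image of the primitive vector $(\ell,0)$, where $\ell$ is the circumference measured in the normalized coordinates of the representative $S'$. Because $S'$ is a one-cylinder surface of area $d$, and the cylinder is horizontal of height $1$ in units where the squares are unit, we get $\ell=d$. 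The holonomy vectors of cylinders in this cusp are thus $\Gamma(S)\cdot(d,0)$. Counting lattice-orbit points in a ball, as in Veech, gives
\[
    \#\{\gamma\in\Gamma(S)/\pairing{\pm T^w}: \|\gamma (d,0)\|\le L\} \sim \frac{w}{[\slz:\pm\Gamma(S)]}\cdot\frac{1}{2\zeta(2)}\cdot\frac{\pi L^2}{d^2}.
\]
This follows from the classical count $\#\{\slz/\pairing{\pm T}\cdot(1,0)\text{ in } B_L\}=\#\{\text{primitive vectors}\}/2\sim \pi L^2/(2\zeta(2))$, together with the fact that the cosets of $\Gamma(S)$ equidistribute by finite index.

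\textbf{Summing over cusps.} Summing over all one-cylinder cusps, the widths add up to the number of elements of $\slz\cdot S$ that are one-cylinder in the horizontal direction. In the statement's convention, this is the number of one-cylinder cusps of $\slz\cdot S$, counted in the standard way where the $\pairing{T}$-orbits are counted with multiplicity given by their size. Dividing by $n$ produces the ratio in the statement. Finally, we rescale to unit area. Replacing $S$ by $S/\sqrt d$ multiplies lengths by $1/\sqrt d$, so the count at length $L$ on the unit-area surface equals the count at length $L\sqrt d$ on the square-tiled one. This multiplies the constant by $d$, and the resulting factor $d/d^2\cdot d$ cancels correctly to give $1/(2\zeta(2))$ times the ratio. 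In writing the proof I will keep track of the normalization carefully rather than relying on memory, because any off-by-$d$ error shows up exactly here.

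\textbf{Main obstacle.} The delicate step is the bookkeeping. It has two parts. First, matching \enquote{$\#$ one-cyl cusps of $\slz\cdot S$} with the sum of cusp widths, that is, counting surfaces in the orbit that are horizontally one-cylinder. Second, handling $\pm I$: whether $-I\in\Gamma(S)$ and the unoriented count with its factor $1/2$. I will handle this by following \cite[Lemma~2]{Lelievre} verbatim, checking the unoriented convention against the torus case $S=\TT^2$. There the formula must return $1/(2\zeta(2))$, in agreement with \cref{eq:Gauss}.
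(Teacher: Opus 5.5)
Your cusp-by-cusp count is correct and essentially re-derives \cite[Lemma~2]{Lelievre}, which the paper simply quotes. The gap is in the final bookkeeping, exactly where you anticipated it. Carried out consistently (per-cusp asymptotic, sum over one-cylinder cusps, rescaling by $d$), your argument yields
\[
\frac{1}{2\zeta(2)}\cdot\frac{1}{\#\,\slz\cdot S}\sum_{C}\frac{\cw(C)}{d},
\]
with $C$ ranging over the one-cylinder cusps. If you read \enquote{$\#$ one-cyl cusps} as $\sum_C\cw(C)$, i.e.\ the number of horizontally one-cylinder surfaces in the orbit, a stray factor $1/d$ remains. Your claim that \enquote{$d/d^2\cdot d$ cancels} tacitly uses $\cw(C)=d$ a second time, after $\cw(C)$ has already been absorbed into the numerator. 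In the paper's convention each cusp is counted once. The proof of \Cref{prop:one_cylinder_Siegel--Veech} explicitly contrasts the $n^2$ obtained for cusps with the $n^3$ of Leli\`evre--Royer, who count surfaces.

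The missing ingredient is the observation the paper's proof rests on: for a primitive surface with $d$ squares, every one-cylinder cusp has width $d$, which is also the circumference of the cylinder. The horizontally one-cylinder representative has height $1$ and circumference $d$, and $T$ shifts the twist by one unit. Since distinct twists modulo $d$ give distinct surfaces, the $\pairing{T}$-orbit has $d$ elements. Hence $\sum_C\cw(C)=d\cdot\#\{\text{one-cyl cusps}\}$, which is the identity $n\sum_C\cw(C)/w^2=\#\{\text{one-cyl cusps}\}$ used in the paper.

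Your proposed sanity check on $\TT^2$ cannot detect this error, since there $d=1$ and both readings agree. Test on a surface with several squares instead. Note also that the step \enquote{distinct twists give distinct surfaces} genuinely needs the boundary configuration to have no rotational symmetry. For the $3$-square surface in $\cH(2)$ with boundary lengths $(1,1,1)$, $T$ fixes the surface and the one-cylinder cusp has width $1$. There the expression displayed above, giving $1/(18\zeta(2))$ and matching the $A_3$ value of \Cref{prop:one_cylinder_Siegel--Veech}, is the correct one, not the statement's $1/(6\zeta(2))$. Finally, your $\pm I$ worry is harmless in the cases used. In genus $2$ the hyperelliptic involution gives $-I\in\Gamma(S)$, and $R^2=-I$ for $S_{2,0}$, so $[\slz:\pm\Gamma(S)]=\#\,\slz\cdot S$.
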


\begin{proof}
By~\cite[Lemma~2]{Lelievre}, the Siegel--Veech constant is given by
\begin{equation}\label{eq:Siegel--Veech_Lelievre}
    \frac{1}{2\zeta(2)} \frac{n}{\#\,\slz\cdot S} \sum_{\substack{C \text{ one-cyl cusps} \\ \text{of } \slz\cdot S}} \frac{\cw(C)}{w^2},
\end{equation}
where $n$ is the number of squares tiling the surface $S$. Now, we notice that
the cusp-width of any one-cylinder cusp of a primitive square-tiled surface
tiled by $n$ squares is $n$, which also coincides with the width of the
cylinder. Hence,
\[
	n \sum_{\substack{C \text{ one-cyl cusps} \\ \text{of } \slz\cdot S}} \frac{\cw(C)}{w^2}
	= \# \text{ one-cyl cusps of } \slz\cdot S.\qedhere
\]
\end{proof}

\section{\texorpdfstring{$\ZZ$}{Z}-covers of square-tiled surfaces in
\texorpdfstring{$\cH(2)$}{H(2)}}\label{sec:H2}

In this section we consider the case when the base surface $S$ is a square-tiled
surface inside $\cH(2)$. In particular, this is the case for the
Hubert--Weitze-Schmithüsen  staircases $S_{a,b}$ if $a=2$ and $b\ge 1$ or $a\ge 3$ and
$b=0$, see \Cref{sec:HSstaircases}.

\subsection{Reduction to good cylinders}
Let $\tS$ be $\ZZ$-cover of the surface $S$. We denote by $N(\tS,L)$, the number
of (maximal cylinders of) closed geodesics (up to $\ZZ$-translations) of length
at most $L$, by $N_{\text{good}}(S,L)$ the number of good cylinders in $S$ of
length at most $L$, and $N_{\text{bad}}(S,L)$, of bad cylinders in $S$ of length
at most $L$.
Note that, by the discussion in \Cref{sec:Zcovers}
\[
	N(\tS,L) = N_{\text{good}}(S,L) + N_{\text{bad}}(S,L).
\]
In particular, it is enough to understand the asymptotic behavior of
$N_{\text{good}}(S,L)$ and $N_{\text{bad}}(S,L)$ separately.

The second named author~\cite{Pardo:counting} used this to reduce the counting
problem on wind-tree models to the counting of good cylinders. In
particular,~\cite[Theorem~4.1]{Pardo:counting} implies that, if $S\in\cH(2)$,
then we have that $N_{\text{bad}}(S,L)$ have sub-quadratic growth rate.

\begin{theorem}\label{thm:bad_cyls_subquad}
    Let $S\in\cH(2)$ be a square-tiled surface and $\tS$ a $\ZZ$-cover of $S$.
    Then, the number $N_{\text{bad}}(S,L)$ of bad cylinders of length at most
    $L$ in $S$ have sub-quadratic growth rate, that is, $N_{\text{bad}}(S,L) =
    o(L^2)$.
\end{theorem}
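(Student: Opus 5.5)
The plan is to reduce the count of bad cylinders to an equidistribution statement on the Veech group orbit, along the lines of \cite[Theorem~4.1]{Pardo:counting}. Since $S\in\cH(2)$ has no relative homology, $H_1^{(0)}(S,\Sigma;\ZZ) = H_1^{(0)}(S;\ZZ)$ is a rank-two symplectic lattice, and $K$ is all of it. A cylinder is bad exactly when $\pairing{\gamma_C,\eta}=0$ but $\gamma_C\notin\ann(K)$. By the splitting \cref{eq:homology_splitting}, write $\gamma_C=\gamma^{\mathrm{st}}+\gamma^{(0)}$. Because $\eta$ has zero holonomy, only $\gamma^{(0)}$ pairs with $\eta$. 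The condition $\gamma_C\notin\ann(K)$ means $\gamma^{(0)}\neq 0$, while $\pairing{\gamma^{(0)},\eta}=0$ with the form nondegenerate on the rank-two lattice forces $\gamma^{(0)}$ to lie on the line through $\eta$.

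I would parametrize cylinders by the cusps of $\Gamma(S)$. By Veech's dichotomy, every cylinder direction is a parabolic direction, conjugate under $\Aff(S)$ to one of finitely many cusp representatives $\theta_1,\dots,\theta_m$. For each direction, the (at most two) cylinders of the corresponding decomposition have core classes $g_*\gamma_{i,j}$ with $g\in\Aff(S)$. Counting cylinders of length at most $L$ then becomes counting elements $g\in\Gamma(S)/\mathrm{Stab}(\theta_i)$ with $\|Dg\,\hol(\gamma_{i,j})\|\le L$. There are $\asymp L^2$ of these in total (Veech), so it suffices to show that the proportion with $\pairing{g_*\gamma_{i,j},\eta}=0$ and $g_*\gamma_{i,j}^{(0)}\neq0$ tends to zero.

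The action of $\Aff(S)$ on $H_1^{(0)}$ gives the Kontsevich--Zorich monodromy $\rho\colon\Aff(S)\to\SL_2(\ZZ)$. The counting condition is $\pairing{\rho(g)\gamma^{(0)}_{i,j},\eta}=0$. If $\rho(\Aff(S))$ is finite, then $\rho$ factors through a finite quotient, and the condition that $\rho(g)\gamma^{(0)}_{i,j}$ lies on the line $\ZZ\eta$ can occur. In that case I would argue instead that the set of such $g$ is a union of cosets of a finite-index subgroup, whose contribution is quadratic. Since \cite{Pardo:counting} asserts sub-quadratic growth, this case must be excluded: the monodromy of $H_1^{(0)}$ in genus two has Zariski-dense image, typically of infinite index in $\SL_2(\ZZ)$, as shown by McMullen and Möller for $\cH(2)$. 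So I would assume the Zariski density of $\rho(\Gamma(S))$ in $\SL_2(\RR)$.

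The core step is to show that, counted along the lattice orbit ordered by $\|Dg\|\le L$, the vectors $\rho(g)v$ (for fixed $v\neq0$) lie in the fixed line $\RR\eta$ for only a density-zero set of $g$. I would deduce this from equidistribution of $(Dg,\rho(g))$ in $\Gamma(S)\backslash(\SL_2(\RR)\times\SL_2(\RR))$. The graph of $\rho$ is a lattice there: this is exactly Pardo's argument via the non-vanishing of the Lyapunov exponent, or equivalently the Zariski density of the graph. Equidistribution of large spheres in the first factor then shows that $\rho(g)v$ is spread out projectively, by Eskin--McMullen type counting applied to the product. Hence the event $\rho(g)v\in\RR\eta$, which has measure zero, has density zero, giving $N_{\text{bad}}(S,L)=o(L^2)$.

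The main obstacle is this joint equidistribution step, namely that the second-factor projective direction is not correlated with $\|Dg\|$. Here I would simply invoke \cite[Theorem~4.1]{Pardo:counting}, whose hypotheses (symplectic, non-degenerate monodromy on the non-tautological part of absolute homology) hold because $\cH(2)$ has no relative periods.
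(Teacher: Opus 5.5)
Your proposal is, in the end, the paper's proof. The paper establishes this statement simply by invoking \cite[Theorem~4.1]{Pardo:counting}, which applies because $\cH(2)$ has no relative homology, so $H_1^{(0)}(S;\ZZ)$ is a rank-two symplectic module for the affine group. Your reduction of bad cylinders to those with $\gamma^{(0)}\in\QQ\eta\setminus\{0\}$ is correct.

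However, two claims in your motivating sketch are false and should be dropped rather than left as justification.

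\emph{First}, for a square-tiled surface of genus two the action on $H_1^{(0)}(S;\ZZ)$ is not ``typically of infinite index''. By Möller's theorem \cite[Theorem~38]{Betal}, which the paper also relies on (see \Cref{sec:q-covers,sec:H11}), its image is a finite-index subgroup of $\slz$. This is also the correct, non-circular reason the finite-image case cannot occur. You cannot exclude that case by noting that the conclusion of \cite{Pardo:counting} would otherwise fail.

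\emph{Second}, the graph $\{(Dg,\rho(g)) : g\in\Aff(S)\}$ is not a lattice in $\SL_2(\RR)\times\SL_2(\RR)$. If $F$ is a fundamental domain for $\Gamma(S)$ in $\SL_2(\RR)$, then $F\times\SL_2(\RR)$ is a fundamental domain for the graph, and it has infinite Haar measure. So there is no finite-volume homogeneous space in which the joint equidistribution you describe could take place.

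If you want the ``core step'' without the black box, arithmeticity already gives it through the computation in \Cref{sec:q-covers}. For every sufficiently large prime $q$, \Cref{lemma:good-q} shows that every bad cylinder lies in the set $B_q$ of bad cylinders modulo $q$. Strong approximation then gives $c(B_q)=c(A)/(q+1)$, as in \cref{eq:q-bad-Siegel--Veech}. Hence $\limsup_{L\to\infty}N_{\text{bad}}(S,L)/\pi L^2\le c(A)/(q+1)$ for all large $q$, so $N_{\text{bad}}(S,L)=o(L^2)$. This is the same mechanism the paper uses for $\cH(1,1)$ in \Cref{thm:bad_cyls_subquad_H11}, and it avoids any appeal to joint equidistribution.
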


Thus, the counting problem on the infinite staircase is reduced to the counting
of good cylinders, which have quadratic asymptotic growth rate, thanks to
Veech~\cite{Veech89,Veech92} or Eskin and Masur~\cite{Eskin-Masur}. More
precisely:

\begin{corollary}
    Let $S\in\cH(2)$ be a square-tiled surface and $\tS$ a $\ZZ$-cover of $S$.
    Then, the number $N(\tS,L)$ of maximal cylinders of closed geodesics length
    at most $L$ in $\tS$, up to $\ZZ$-translations, have quadratic growth rate,
    \[
        N(\tS,L) = \frac{c_{\text{good}}(S^{(1)})}{\area(S)} \pi L^2,
    \]
    where $c_{\text{good}}(S^{(1)})$ is the Siegel--Veech constant associated
    with the counting problem of good cylinders in $S^{(1)}$, the renormalized
    area~$1$ version of $S$.
\end{corollary}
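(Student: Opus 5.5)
The corollary should follow by combining the decomposition $N(\tS,L) = N_{\text{good}}(S,L) + N_{\text{bad}}(S,L)$ with \Cref{thm:bad_cyls_subquad} and Veech's quadratic asymptotics for Veech surfaces. The statement should be read asymptotically: $N(\tS,L) \sim \frac{c_{\text{good}}(S^{(1)})}{\area(S)}\pi L^2$. By \Cref{thm:bad_cyls_subquad}, $N_{\text{bad}}(S,L) = o(L^2)$. It therefore suffices to show that $N_{\text{good}}(S,L) = \frac{c_{\text{good}}(S^{(1)})}{\area(S)}\pi L^2 + o(L^2)$.

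First, I would verify that good cylinders form a family to which Veech's counting theorem applies. Since $S\in\cH(2)$, we have $K = H_1^{(0)}(S,\Sigma;\ZZ)$, which is $\Gamma(S)$-invariant. The annihilator $\ann(K)$ is then preserved by the action of $\Aff(S)$ on homology, so the set of good cylinders is a union of $\Aff(S)$-orbits of cylinders. Moreover, by \Cref{lemma:good_are_one-cyl}, good cylinders are exactly the cylinders of one-cylinder decompositions, and this is a union of cusp classes. Square-tiled surfaces are Veech surfaces, and their Veech groups have finite index in $\slz$. Hence there are finitely many cusps, and each cusp contributes finitely many cylinder orbits. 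Veech's theorem~\cite{Veech89} (see also \cite{Eskin-Masur}) then gives, for each such orbit, a count of holonomy vectors of length at most $L$ that is asymptotic to a constant times $\pi L^2$. Summing over the finitely many orbits yields $N_{\text{good}}(S^{(1)},L) \sim c_{\text{good}}(S^{(1)})\pi L^2$. An explicit value is available through \Cref{lemma:SV_1cyl_cusps}.

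Second, I would handle the normalization. We have $S = \sqrt{\area(S)}\,S^{(1)}$, so holonomy vectors scale by $\sqrt{\area(S)}$. Consequently $N_{\text{good}}(S,L) = N_{\text{good}}(S^{(1)}, L/\sqrt{\area(S)})$, which is asymptotic to $c_{\text{good}}(S^{(1)})\pi L^2/\area(S)$.

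The only subtle step is the first one: checking that "good" is an $\Aff(S)$-invariant condition. This requires that affine diffeomorphisms act on $H_1(S,\Sigma;\ZZ)$ preserving the intersection pairing, so that $\ann(K)$ is invariant. That holds because affine maps are orientation-preserving homeomorphisms fixing $\Sigma$. The remainder of the argument is bookkeeping.
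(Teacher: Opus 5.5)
Your proposal is correct and follows the paper's own argument. You split $N(\tS,L)$ into good and bad cylinders, discard the bad ones via \Cref{thm:bad_cyls_subquad}, and apply Veech's (or Eskin--Masur's) quadratic asymptotics to the good ones. Your extra details are exactly the bookkeeping the paper leaves implicit: the $\Aff(S)$-invariance of $\ann(K)$, the finiteness of cylinder orbits, and the rescaling $L\mapsto L/\sqrt{\area(S)}$. You are also right that the displayed equality should be read as an asymptotic $\sim$.
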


\subsection{One-cylinder cusps}
Lelièvre~\cite{Lelievre} computed the Siegel--Veech constant for \emph{regular}
cylinders in $\cH(2)$. We recall that a cylinder is regular if both its boundary
components consist of a single saddle connection. Unfortunately, in our setting,
good cylinders are never regular. However, Lelièvre's computations can be
adapted. In particular, it is possible to show that $c_{\text{good}}(S^{(1)})
\to 0$ as $\area(S) \to \infty$ (see~\cite[Remark after Theorem~1]{Lelievre}).

Each square-tiled surface in the stratum $\cH(2)$ decomposes into either one or
two horizontal cylinders. Hence, we can parametrize square-tiled surfaecs in
$\cH(2)$, by the heights, widths, and twist parameters of these cylinders (see,
e.g.,~\cite{Hubert-Lelievre}). Thanks to \Cref{lemma:good_are_one-cyl}, we are
interested in one-cylinder decompositions, where the unique cylinder has three
saddle connections on each boundary component.

Since the surface $S$ is primitive, a cylinder in a one-cylinder decomposition
has height~$1$ and, therefore, width~$n=\area(S)$, the number of squares. It
follows that, in our context, one-cylinder decompositions are parametrized by
the lengths of the three horizontal saddle connections, that add up to $n$, and
a twist parameter, which we set such that the horizontal saddle connections
appear in some (cyclic) order on the bottom of the cylinder, and in reverse
order on the top, see \Cref{f:1-cylinder}.

Later on, we will count the number of primitive one-cylinder square-tiled
surfaces. Hence, let us discuss a bit more these coordinates,
see~\cite{Eskin-Masur-Schmoll, Lelievre-Royer}. Let $\ell_1$, $\ell_2$ and
$\ell_3$ be the lengths of the three saddle connections in the boundary of the
cylinder. Then, if the surface has $n$ squares, these lengths must sum up to a
divisor $r$ of $n$. Now, if the lengths are not equal, there are $r$ possible
values of the twist which yield different surfaces. However, the three possible
cyclic permutations of $(\ell_1, \ell_2, \ell_3)$ give the same surface. If the
three lengths are equal (which is not possible in the primitive case), there are
no cyclic permutations, but only $r/3$ values of the twist result in different
surfaces. Summing up, the number of one-cylinder square-tiled surfaces in
$\cH(2)$ with $n$ squares is
\begin{equation}\label{eq:counting_one_cylinders}
    \frac{1}{3} \sum_{r\mid n} \sum_{\substack{(\ell_1, \ell_2, \ell_3)\in\NN_+^3 \\ \ell_1 + \ell_2 + \ell_3 = r}} r.
\end{equation}

\begin{figure}[tbh]
\centering
\begin{tikzpicture}
	\draw (0,0) -- (1,0) -- (1,.5) -- cycle;
	\draw (1,.5) -- (2,1) --  (2,0) -- (1,0);
        \foreach \i in {2,...,6}
            \draw (\i,0) rectangle ++(1,1);
	\draw (8,.5) -- (7,0) --  (7,1) -- (8,1);
	\draw (8,1) -- (9,1) -- (8,.5) -- cycle;
        
	\draw[->,dashed] (.7,1.3) -- (0,1.3);
	\draw (1,1.3) node {$t$};
	\draw[->,dashed] (1.3,1.3) -- (2,1.3);
	\draw[thick] (2,1) -- node[midway,above] {$\ell_3$} ++(4,0) -- node[midway,above] {$\ell_2$} ++(2,0) -- node[midway,above] {$\ell_1$} ++(1,0);
	\draw[thick] (0,0) -- node[midway,below] {$\ell_1$} ++(1,0) -- node[midway,below] {$\ell_2$} ++(2,0) -- node[midway,below] {$\ell_3$} ++(4,0);
    
    \node[b-bull] at (0,0) {};
    \node[b-bull] at (1,0) {};
    \node[b-bull] at (3,0) {};
    \node[b-bull] at (7,0) {};
    \node[b-bull] at (2,1) {};
    \node[b-bull] at (6,1) {};
    \node[b-bull] at (8,1) {};
    \node[b-bull] at (9,1) {};
\end{tikzpicture}
\caption{A one-cylinder decomposition of a surface in $\cH(2)$ parametrized by the lengths of its boundary saddle connections and a twist parameter (cf.~\cite[Figure~2.3]{Lelievre-Royer}).}
\label{f:1-cylinder}
\end{figure}

\subsection{Computing the Siegel--Veech constants in
\texorpdfstring{$\cH(2)$}{H(2)}}

\Cref{lemma:SV_1cyl_cusps} gives us a formula to compute the Siegel--Veech
constants when $S\in\cH(2)$. In our context, we need to understand the
quantities
\[
	o(S) = \#\slz\cdot S
	\qquad\text{and}\qquad
	c_1(S) = \# \text{ one-cyl cusps of } \slz\cdot S
\]
for the primitive square-tiled surfaces $S \in \cH(2)$.

$\slz$-orbits of primitive square-tiled surfaces $S \in \cH(2)$ are determined
by the number $n \in \NN$ of squares used to tile $S$ and, when $n$ is odd, by
the number of integer Weierstrass points. See, e.g.,~\cite{Hubert-Lelievre}. For
even $n$, let $C_n$ denote the only $\slz$-orbit of primitive square-tiled
surfaces in $\cH(2)$ tiled by $n$ squares. For odd $n$, let $A_n$ denote the
$\slz$-orbit of primitive square-tiled surfaces in $\cH(2)$ tiled by $n$
squares, with exactly one integer Weierstrass point (which corresponds to the
conical singularity), and, by $B_n$, the other $\slz$-orbit of primitive
square-tiled surfaces in $\cH(2)$ tiled by $n$ squares, which have three integer
Weierstrass points.

\begin{proposition}\label{prop:one_cylinder_Siegel--Veech}
    The Siegel--Veech constant associated with the counting problem of cylinders
    in a one-cylinder decomposition of a \emph{unit area} primitive square-tiled
    surface $S$ is
    \[
        \begin{cases}
        \displaystyle\frac{1}{9 \zeta(2)}\cdot\frac{1}{n - 1}, & \text{if } S \in A_n, \\[1em]
        \displaystyle\frac{1}{3 \zeta(2)}\cdot\frac{1}{n - 3}\left(1 - \frac{4}{\psi(n)}\right), & \text{if } S \in B_n,\\[1em]
        \displaystyle\frac{2}{9 \zeta(2)}\cdot\frac{1}{n - 2}\left(1 - \frac{3}{\psi(n)}\right), & \text{if } S \in C_n,
        \end{cases}
    \]
    where $\psi$ is the Dedekind psi function
    \[
        \psi(n) = n\prod_{p \mid n}\left(1 + \frac{1}{p}\right),
    \]
    where the notation $\prod_{p \mid n}$ means that we are taking the product
    over the prime factors of $n$.
\end{proposition}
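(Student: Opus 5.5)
By \Cref{lemma:SV_1cyl_cusps}, the constant equals $\frac{1}{2\zeta(2)}\cdot c_1(S)/o(S)$. So the plan is to compute, for each of the orbits $A_n$, $B_n$, $C_n$, the orbit size $o(S)$ and the number $c_1(S)$ of one-cylinder cusps, and then take the ratio.

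The orbit sizes are known from Hubert--Lelièvre and McMullen (and the later refinements used in Lelièvre--Royer). Writing $N = \frac{3}{8}(n-2)\,n^2\prod_{p\mid n}(1-p^{-2})$ for the total number of primitive $n$-square surfaces in $\cH(2)$, they are:
\begin{itemize}
\item for odd $n$, $|A_n| = \frac{3}{16}(n-1)\,n^2\prod_{p\mid n}(1-p^{-2})$ and $|B_n| = \frac{3}{16}(n-3)\,n^2\prod_{p\mid n}(1-p^{-2})$;
\item for even $n$, $|C_n| = N$.
\end{itemize}
I will quote these rather than rederive them.

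For $c_1$, I use the fact that one-cylinder cusps of an orbit correspond bijectively to $T$-orbits, where $T=\begin{pmatrix}1&1\\0&1\end{pmatrix}$, of horizontally one-cylinder surfaces in that orbit. For such a surface the cylinder has height $1$ and circumference $n$, so its $T$-orbit has length $n$ (the cusp width). Hence
\[
c_1 = \frac{\#\{\text{one-cylinder surfaces in the orbit}\}}{n}.
\]
The count of one-cylinder surfaces comes from the parametrization of \Cref{f:1-cylinder} and \cref{eq:counting_one_cylinders}, restricted to primitive surfaces ($r=n$ and $\gcd(\ell_1,\ell_2,\ell_3,n)$-type conditions handled by Möbius inversion). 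This gives the number of primitive one-cylinder surfaces as $\frac{n}{3}\cdot J(n)$, where $J(n)$ is a Jordan-totient-like count of triples. I expect something of the shape $\frac{n}{6}\,n^2\prod_{p\mid n}(1-p^{-2})$ minus correction terms of lower order, which is what produces the factors $(1-4/\psi(n))$ and $(1-3/\psi(n))$.

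The main obstacle is splitting these one-cylinder surfaces between $A_n$ and $B_n$ for odd $n$. Here I would compute the number of integer Weierstrass points directly in the coordinates $(\ell_1,\ell_2,\ell_3,t)$. The hyperelliptic involution rotates the cylinder by $\pi$, and its six fixed points are:
\begin{itemize}
\item the singularity;
\item two points at the centre of the cylinder;
\item three points at the midpoints of the saddle connections.
\end{itemize}
Integrality then becomes a parity condition on the $\ell_i$ and on $t$, as in Lelièvre--Royer. Counting the triples satisfying each parity pattern gives $c_1(A_n)$ and $c_1(B_n)$ separately. I would cross-check the result against $c_1(A_n)+c_1(B_n)$ from the unsplit count, and against small cases: $n=3$ gives $B_3$ empty, and $n=5$ can be checked by hand. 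The even case $C_n$ needs no split.

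Finally I divide $c_1$ by $o$ and multiply by $1/(2\zeta(2))$. The factors $n^2\prod_{p\mid n}(1-p^{-2})$ cancel, leaving $1/(n-1)$, $1/(n-3)$ and $1/(n-2)$. Expressing the residual correction terms through $\psi(n)$, using $n^2\prod_{p\mid n}(1-p^{-2}) = \psi(n)\,\varphi(n)$, should yield the stated constants $\frac{1}{9}$, $\frac{1}{3}$ and $\frac{2}{9}$.
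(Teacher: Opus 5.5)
Your plan is correct and follows the paper's proof. You reduce via \Cref{lemma:SV_1cyl_cusps} to the ratio $c_1(S)/o(S)$, quote the orbit sizes, and count gcd-one triples $(\ell_1,\ell_2,\ell_3)$ by Möbius inversion, with $A_n$ corresponding to all $\ell_i$ odd and $B_n$ to exactly two $\ell_i$ even; the twist $t$ plays no role, since the two central Weierstrass points sit at height $1/2$.

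The only minor difference is in the $C_n$ case, where you take one third of the unconstrained count $\sum_{d\mid n}\mu(n/d)\binom{d-1}{2}$ instead of the paper's odd-divisor Möbius inversion over the (odd, odd, even) pattern. This gives the same value $c_1=\frac{n^2}{6}\prod_{p\mid n}(1-p^{-2})-\frac{n}{2}\prod_{p\mid n}(1-p^{-1})$ with less work.
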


Before we begin, let us recall that as an application of Möbius inversion, by~\cite[Lemma~3.9]{Lelievre-Royer}, the counting of primitive one-cylinder surfaces with $n$ squares can be reduced to counting \emph{height-primitive} ones, through the identity
\begin{multline}\label{eq:prim-id}
    \# \,\text{one-cylinder primitive square-tiled surfaces with $n$ squares}
    \\
    =
    \sum_{r\mid n} \mu\left(\frac{n}{r}\right) \# \,\text{one-cylinder square-tiled surfaces with $r$ squares and height $1$},
\end{multline}
where $\mu$ is the Möbius function, and both counting are over horizontal cylinder decompositions of square-tiled surfaces of the same \emph{type} ($A$, $B$, or $C$).

Also note that, for $k \geq 0$, we have
\begin{equation}\label{eq:Mob-id}
    \sum_{d \mid n} \frac{\mu(r)}{d^k} = \prod_{p \mid n} \left( 1 - \frac{1}{p^k}\right).
\end{equation}

\begin{proof}
If $S \in \cH(2)$ is a square-tiled surface, tiled by $n\in\NN$ squares, then,
by~\cite[Remark after Lemma~4.11]{Eskin-Masur-Schmoll}
and~\cite[Theorem~1.1]{Lelievre-Royer},
\begin{equation}\label{eq:1_cyl_orbit_count}
	o(S) =
	\begin{cases}
	\displaystyle\frac{3}{16}(n - 1)n^2 \prod_{p \mid n} \left(1 - \frac{1}{p^2} \right), & \text{if } S \in A_n, \\
	\displaystyle\frac{3}{16}(n - 3)n^2 \prod_{p \mid n} \left(1 - \frac{1}{p^2} \right), & \text{if } S \in B_n,\\
	\hfill\displaystyle\frac{3}{8}(n - 2)n^2 \prod_{p \mid n} \left(1 - \frac{1}{p^2} \right), & \text{if } S \in C_n.
	\end{cases}
\end{equation}
For $S \in A_n$, following~\cite[Lemma~5.7]{Lelievre-Royer}, gives
\[
    c_1(S) = \alpha(n) = \frac{1}{3} \#\Set{\ell_1, \ell_2, \ell_3\in 2\NN+1\given \gcd(\ell_1,\ell_2,\ell_3) = 1,\, \ell_1 + \ell_2 + \ell_3 = n}.
\]
Writing each $\ell_i = 2k_i + 1$ for integers $k_i \geq 0$, the sum $\ell_1 + \ell_2 + \ell_3 = n$ becomes
\[
    k_1 + k_2 + k_3 = \frac{n-3}{2},
\]
and the number of non-negative integer solutions to this is
\[
    \binom{\frac{n-3}{2} + 3 - 1}{3 - 1}
    = \binom{\frac{n+1}{2}}{2}
    = \frac{1}{2}\left(\frac{n+1}{2}\right)\left(\frac{n-1}{2}\right)
    = \frac{n^2 - 1}{8}.
\]
Now, every unconstrained triple summing to $n$ has a greatest common divisor $d = \gcd(\ell_1, \ell_2, \ell_3) \mid n$.

Dividing the elements of an unconstrained triple by its greatest common divisor $d$ yields then a bijection to a triple of odd integers summing to $n/d$ with a greatest common divisor equals to $1$. It follows that
\[
    \sum_{d \mid n} 3 \alpha\left(\frac{n}{d}\right) = \frac{n^2 - 1}{8}
\]
Applying Möbius inversion formula, we obtain
\[
    \alpha(n) = \sum_{d \mid n} \mu\left(\frac{n}{d}\right) \frac{d^2 - 1}{24} = \frac{1}{24} \sum_{d \mid n} \left(\mu(d) \left(\frac{n}{d}\right)^2 - \sum_{d \mid n} \mu(d) \right)
\]
Finally, by \cref{eq:Mob-id}, we conclude that
\[
    c_1(S) = \frac{n^2}{24} \prod_{p \mid n} \left(1 - \frac{1}{p^2}\right).
\]
We remark that in~\cite{Lelievre-Royer}, there is a factor $n^3$ instead of a
$n^2$. The difference stems from the fact that we are counting the number of
$1$-cylinder \emph{cusps}, whereas Lelièvre--Royer are counting \emph{surfaces}. Since, as we
noted in \Cref{lemma:SV_1cyl_cusps}, for every one-cylinder cusp there are
exactly $n$ primitive square-tiled surfaces tiled by $n$ squares, we obtain the
above formula.

We now consider the case when $S \in B_n$. In this case, we have to choose three
integer lengths for the saddle connections in the boundary of the cylinders. By
definition of $B_n$, two of these lengths are even, and one of them is odd.
Then,
\[
    c_1(S) = \#\Set{\ell_1\in 2\NN+1, \ell_2, \ell_3\in 2\NN+2 \given \gcd(\ell_1,\ell_2,\ell_3) = 1,\, \ell_1 + \ell_2 + \ell_3 = n},
\]
and, similarly to the previous case, we have
\[
\begin{split}
    c_1(S) &= \sum_{r\mid n} \mu\left(\frac{n}{r}\right) \#\Set{\ell_1\in 2\NN+1, \ell_2, \ell_3\in 2\NN+2 \given \ell_1 + \ell_2 + \ell_3 = r} \\
        &= \sum_{r\mid n} \mu\left(\frac{n}{r}\right) \#\Set{(\ell_1, \ell_2, \ell_3)\in \NN^3\given \ell_1 + \ell_2 + \ell_3 = \frac{r-5}{2}} \\
        &= \sum_{r\mid n} \mu\left(\frac{n}{r}\right) \frac{r^2-4r+3}{8}\\
        &= \frac{1}{8}\sum_{d\mid n} \left(\mu(d)\left(\frac{n}{d}\right)^2 - 4\mu(d)\frac{n}{d} + 3 \mu(d)\right)\\
        &= \frac{n^2}{8}\prod_{p \mid n} \left(1 - \frac{1}{p^2}\right) - \frac{n}{2} \prod_{p \mid n} \left(1 - \frac{1}{p}\right).
\end{split}
\]
We remark that the factor $1/3$ from \cref{eq:counting_one_cylinders} is missing in the previous equation as we have specified the cyclic permutation $(\ell_1,\ell_2,\ell_3)$ to begin with.

The final case is when $S\in C_n$. By definition, $n$ is even. Moreover, in the primitive case, we cannot have that all the three lengths of the saddle connections
are even. Hence, two of them are odd and one of them is even:
\[
    c_1(S) = \gamma(n) = \#\Set{\ell_1,\ell_2\in 2\NN+1, \ell_3\in 2\NN+2 \given \gcd(\ell_1,\ell_2,\ell_3) = 1,\, \ell_1 + \ell_2 + \ell_3 = n}.
\]
In particular, the greatest common divisor of the lengths is always odd and, conversely, no triple divisible by an even number can represent a primitive square-tiled surface in $C_n$.
That is,
\[
    \gamma(n) = \sum_{\substack{d \mid n \\ d \text{ odd}}} \gamma\left(\frac{n}{d}\right)
\]

Therefore, in the Möbius inversion only the odd divisors $d$ of $n$ appear, and for them $r = n/d$ is even. Then, similarly to the above, we have
\[
\begin{split}
    c_1(S) &= \sum_{\substack{r\mid n \\ n/r \text{ odd}}} \mu\left(\frac{n}{r}\right) \#\Set{\ell_1,\ell_2\in 2\NN+1, \ell_3\in 2\NN+2 \given \ell_1 + \ell_2 + \ell_3 = r}\\
        &= \sum_{\substack{r\mid n \\ n/r \text{ odd}}} \mu\left(\frac{n}{r}\right) \#\Set{\ell_1,\ell_2\in 2\NN+1, \ell_3\in 2\NN+2 \given \ell_1 + \ell_2 + \ell_3 = \frac{r - 4}{2}}\\
        &= \sum_{\substack{r\mid n \\ n/r \text{ odd}}} \mu\left(\frac{n}{r}\right) \frac{r^2-2r}{8}\\
        &= \frac{1}{8}\sum_{\substack{d\mid n \\ d \text{ odd}}} \left(\mu(d) \left(\frac{n}{d}\right)^2 - 2 \mu(d)\frac{n}{d}\right)\\
        &= \frac{n^2}{8}\prod_{\substack{p \mid n \\ p \neq 2}} \left(1 - \frac{1}{p^2}\right) - \frac{n}{4} \prod_{\substack{p \mid n \\ p \neq 2}} \left(1 - \frac{1}{p}\right)\\
        &= \frac{n^2}{6}\prod_{p \mid n} \left(1 - \frac{1}{p^2}\right) - \frac{n}{2} \prod_{p \mid n} \left(1 - \frac{1}{p}\right).
\end{split}
\]

Summing up, we have shown that
\begin{equation}\label{eq:1_cyl_cusps_count}
    c_1(S) =
	\begin{cases}
	\displaystyle\frac{n^2}{24} \prod_{p \mid n} \left(1 - \frac{1}{p^2} \right), & \text{if } S \in A_n,\\
	\displaystyle\frac{n^2}{8} \prod_{p \mid n} \left(1 - \frac{1}{p^2}\right) - \frac{n}{2} \prod_{p \mid n} \left(1 - \frac{1}{p}\right), & \text{if } S \in B_n,\\
	\displaystyle\frac{n^2}{6} \prod_{p \mid n} \left(1 - \frac{1}{p^2}\right) - \frac{n}{2} \prod_{p \mid n} \left(1 - \frac{1}{p}\right),  & \text{if } S \in C_n.
	\end{cases}
\end{equation}

The conclusion now follows from \Cref{lemma:SV_1cyl_cusps}, using \cref{eq:1_cyl_cusps_count,eq:1_cyl_orbit_count}.
\end{proof}

Using the results of this section we obtain our first main result,
\Cref{bigthm:asymptotic} when the base surface is in $\cH(2)$.

\begin{theorem}
    Let $S$ be a square-tiled surface in $\cH(2)$ and consider a zero holonomy
    $\ZZ$-cover $\tS$. Then, the number $N(\tS,L)$ of maximal cylinders of
    closed geodesics of length at most $L$ in $\tS$, up to $\ZZ$-translations
    has quadratic growth:
    \[
        N(\tS,L) \sim \frac{c_\text{good}(S^{(1)})}{\area(S)} \cdot\pi L^2,
    \]
    where $c_\text{good}(S^{(1)})$ is the Siegel--Veech constant which counts
    one-cylinder directions cylinders in the area $1$ normalization of $S$, and
    can be explicitly computed.
\end{theorem}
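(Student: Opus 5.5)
The plan is to assemble results already established in this section, starting from the decomposition
\[
    N(\tS,L) = N_{\text{good}}(S,L) + N_{\text{bad}}(S,L),
\]
which was derived from the lifting criterion of \Cref{sec:Zcovers}. A cylinder of $S$ lifts to a cylinder of $\tS$ exactly when $\pairing{\gamma_C,\eta}=0$. In that case its $\ZZ$-translates account for a single class of cylinders in $\tS$, and this class has the same core curve length, since the covering map is a local isometry. Hence counting cylinders in $\tS$ up to $\ZZ$-translation amounts to counting cylinders $C$ of $S$ with $\pairing{\gamma_C,\eta}=0$, and these are exactly the good and the bad cylinders. By \Cref{thm:bad_cyls_subquad}, $N_{\text{bad}}(S,L)=o(L^2)$, so it suffices to show that $N_{\text{good}}(S,L)\sim \frac{c_\text{good}(S^{(1)})}{\area(S)}\pi L^2$.

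Next I identify the good cylinders. Since $S\in\cH(2)$, the stratum has no relative homology, so $H_1(S,\Sigma;\ZZ)=H_1(S;\ZZ)$ and $H_1^{(0)}$ is a rank $2$ symplectic complement of the tautological plane. As remarked after the definition of good and bad cylinders, in this case $K=H_1^{(0)}(S,\Sigma;\ZZ)$. (If one wants to verify this directly: the Kontsevich--Zorich monodromy of a square-tiled surface in $\cH(2)$ acts on the rank $2$ part with finite-index image in $\SL_2(\ZZ)$, so it acts irreducibly.) Consequently \Cref{lemma:good_are_one-cyl} applies, and a cylinder is good if and only if its direction is a one-cylinder direction of $S$. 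The good cylinders are therefore exactly the cylinders of one-cylinder decompositions, one per such direction.

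Then I apply Veech's dichotomy for the lattice surface $S$ \cite{Veech89}. The periodic directions split into finitely many $\Gamma(S)$-orbits, corresponding to cusps, and for each cusp the count of cylinders in directions of that orbit grows quadratically. Summing over the finitely many one-cylinder cusps gives $N_{\text{good}}(S^{(1)},L)\sim c_\text{good}(S^{(1)})\pi L^2$, where $c_\text{good}(S^{(1)})$ is the constant of \Cref{lemma:SV_1cyl_cusps}; for primitive $S$ it is given explicitly by \Cref{prop:one_cylinder_Siegel--Veech}. If $S$ is not primitive, I would first reduce to the primitive case by an affine rescaling by a lattice matrix, which preserves the one-cylinder property and only changes constants. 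Passing from $S^{(1)}$ to $S$ is the rescaling by $\sqrt{\area(S)}$: lengths on $S$ are $\sqrt{\area(S)}$ times those on $S^{(1)}$, so $N(S,L)=N(S^{(1)},L/\sqrt{\area(S)})$. This produces the factor $1/\area(S)$.

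The main obstacle is the identification of good cylinders with one-cylinder directions, that is, ensuring $K$ is the whole of $H_1^{(0)}$. This is where the $\cH(2)$ hypothesis is used, and it is what justifies invoking \Cref{lemma:good_are_one-cyl}. Everything else is bookkeeping: the negligibility of bad cylinders is imported from \Cref{thm:bad_cyls_subquad}, and the quadratic asymptotics come from the Veech theory.
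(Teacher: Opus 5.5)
Your proposal is correct and follows the same route as the paper, which proves this theorem simply by assembling the results of the section: the splitting $N(\tS,L)=N_{\text{good}}(S,L)+N_{\text{bad}}(S,L)$, the negligibility of bad cylinders from \Cref{thm:bad_cyls_subquad}, the identification of good cylinders with one-cylinder directions via \Cref{lemma:good_are_one-cyl} (using $K=H_1^{(0)}$ in $\cH(2)$), and Veech's quadratic asymptotics for the lattice surface $S$, with the constant given by \Cref{lemma:SV_1cyl_cusps}. Your extra remarks on the area rescaling that produces the factor $1/\area(S)$, and on reducing a non-primitive $S$ to a primitive one, are sound; the paper leaves these details implicit.
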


The above theorem implies the quadratic counting for Hubert--Weitze-Schmithüsen 
staircases, with the explicit Siegel--Veech constants:

\begin{corollary}\label{cor:SV_staircases_H2}
    Let $S = S_{a,b}$, for some $a,b \in \NN$ be one of the Hubert--Weitze-Schmithüsen 
    staircases. Assume $S_{a,b}$ is in $\cH(2)$, meaning that $a = 2$ and $b
    \geq 1$, or $a \geq 3$ and $b = 0$. Then, the number $N(S,L)$ of maximal
    cylinders of closed geodesics of length at most $L$ in $S$, up to $\ZZ$
    translations, has quadratic growth rate,
    \[
        N(S,L) \sim 
        \pi L^2 \cdot \begin{cases}
        \displaystyle\frac{1}{9 \zeta(2)}\cdot\frac{1}{(a+b)(a+b - 1)}, & \text{if } a + b \text{ is odd}, \\[1em]
        \displaystyle\hfil\frac{2}{9 \zeta(2)}\cdot\frac{1}{(a+b)(a+b - 2)}\left(1 - \frac{3}{\psi(a+b)}\right), & \text{if } a + b \text{ is even}.
        \end{cases}
    \]
\end{corollary}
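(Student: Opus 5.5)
The plan is to combine the preceding theorem for $\cH(2)$ with \Cref{prop:one_cylinder_Siegel--Veech}, once three facts about $S_{a,b}$ in Case~$(2)$ are checked. The formula to use is
\[
    N(\tS,L) \sim \frac{c_\text{good}(S^{(1)})}{\area(S)}\,\pi L^2 .
\]
Here $\area(S)=n=a+b$, since $\sigma_h,\sigma_v$ act on $a+b$ squares. The three facts are the following.

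First, $S_{a,b}$ is primitive. The horizontal cylinder through square $1$ has width $a$ and contains a horizontal saddle connection of length $1$ (the side $A$ in \Cref{fig:cases2}), so holonomies include $(1,0)$. Similarly, the vertical cylinder through squares $1,a,\dots,a+b$ contains a vertical saddle connection of length $1$, giving $(0,1)$. Hence the holonomy vectors span $\ZZ^2$.

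Second, the defining class $\eta=\eta_A+\eta_B$ has zero holonomy. In $\cH(2)$ the smallest $\Gamma(S)$-invariant subspace containing it is all of $H_1^{(0)}(S,\Sigma;\ZZ)$, as remarked after the definition of good cylinders. Therefore \Cref{lemma:good_are_one-cyl} applies: good cylinders are exactly the cylinders of one-cylinder directions, and $c_\text{good}(S^{(1)})$ is the constant of \Cref{prop:one_cylinder_Siegel--Veech}.

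Third, we must determine the $\slz$-orbit. For $n=a+b$ even the orbit is necessarily $C_n$. For $n$ odd we must show $S_{a,b}\in A_n$, i.e.\ exactly one integer Weierstrass point, and this is the step I expect to need real care. I would compute the hyperelliptic involution explicitly as rotation by $\pi$ of the L-shaped picture and count its fixed points among vertices of squares.

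For $a\ge3$, $b=0$, the surface is a single horizontal cylinder of width $a$ with boundary saddle connections of lengths $1,1,a-2$; for $a$ odd these are all odd. Via the parity description used in the proof of \Cref{prop:one_cylinder_Siegel--Veech} (type $A$ means all three lengths odd, type $B$ means two even and one odd), this places it in $A_n$. For $a=2$, $b\ge1$, I would instead look at the vertical direction. The vertical cylinder has width $b+2$ with saddle connections of lengths $1,1,b$, so it is again a one-cylinder direction with all lengths odd when $n=b+2$ is odd. Since the type is an $\slz$-invariant, this again gives $A_n$. Alternatively one can cite Hubert--Lelièvre's classification, or the computation in \cite{Hubert-Schmithusen}, to confirm.

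Plugging in, for $n$ odd we get
\[
    \frac{1}{n}\cdot\frac{1}{9\zeta(2)(n-1)},
\]
and for $n$ even
\[
    \frac{1}{n}\cdot\frac{2}{9\zeta(2)(n-2)}\left(1-\frac{3}{\psi(n)}\right),
\]
matching the statement with $n=a+b$. The bad cylinders are negligible by \Cref{thm:bad_cyls_subquad}, which is already incorporated in the preceding theorem.
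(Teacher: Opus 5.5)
Your proposal is correct and follows the paper's proof: combine the $\cH(2)$ theorem with \Cref{prop:one_cylinder_Siegel--Veech}, divide by $\area(S)=a+b$, and identify the orbit as $A_n$ for $n$ odd and $C_n$ for $n$ even. The only variation is for $a=2$, $b$ odd, where the paper works in the horizontal two-cylinder decomposition (the Weierstrass points inside the height-$b$ cylinder are non-integral), while you pass to the vertical one-cylinder direction with saddle connections of lengths $1,1,b$; both work, and your explicit primitivity check supplies a hypothesis the paper leaves implicit.
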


\begin{proof}
We first consider the case when the number of squares $n = a + b$ is odd. If
$a=2$, then $b$ is odd. In this case, the Weierstrass point inside the cylider
of height $b$ is non integral. If $b=0$ and $a\ge 3$, the only horizontal
cylinder is bounded by $3$ saddle connections, two of them have length $1$ and
the middle one has to be of odd length. Hence, in both cases, there is only
integral Weierstrass point (the singularity), that is, the surface is in the
orbit $A_n$.

If $n$ is even, the by~\cite{Hubert-Lelievre}, the surface belongs to the orbit
$C_n$. Applying \Cref{prop:one_cylinder_Siegel--Veech}, we are done.
\end{proof}

\section{Finite intermediate cyclic covers with base in
\texorpdfstring{$\cH(2)$}{H(2)}}\label{sec:q-covers}
Let $q$ be an odd prime and consider $\qS$, the cyclic cover of degree $q$ of
$S\in\cH(2)$ defined by $\eta$. For $L>0$, we denote, as usual, by $N(\qS, L)$
the number of cylinders on $\qS$ of length at most $L$. By the work of
Veech~\cite{Veech89}, there exists a Siegel--Veech constant $c(\qS)$ such that
$N(\qS,L) \sim c(\qS) \pi L^2$.

In this section we show \Cref{bigthm:large_genus} in the case when the base is
in $\cH(2)$. Namely, that, as $q$ goes to infinity, the sequence of
Siegel--Veech constants $c(\qS)$ converges to the Siegel--Veech constant of the
infinite cover $c(\tS)$.

Let $\cM$ be the $\slr$-orbit of $S$ and $\cM^q$, the $\slr$-orbit of $\qS$.
Then, elements in $\cM^q$ consist of cyclic covers of elements in $\cM$. If we
denote by $K$ the zero holonomy part of the homology of $S$, $H_1^{(0)}(S;\ZZ)$.
Then the Veech group of $S$, $\Gamma(S)$, acts as a finite index subgroup
$\Gamma'$ of $\slz$ on $K \cong \ZZ^2$.

By~\cite[Theorem~0]{Matthews-Vaserstein-Weisfeiler}, any finitely generated
Zariski dense subgroup $\Gamma' < \slz$ projects subjectively onto
$\SL_2(\ZZ_q)$ for any $q \in \NN$ coprime to some fixed $q_0 \in \NN$
(depending on $\Gamma'$). However, by~\cite[Theorem~2.8]{Kirkwood-McDonald}, for
every odd $q \in \NN$, $\SL_2(\ZZ_q)$ acts transitively on unimodular elements
of $\ZZ_q^2$. Then, it follows that $\cM^q$ contains \emph{all} $\ZZ_q$-covers of
$S$. Or, what is the same, $\cM^q$ consists of \emph{every} $\ZZ_q$-cover of
elements in $\cM$ (cf.~\cite[footnote~7]{Magee}).

In the cyclic covers case, not only we need to distinguish good and bad
cylinders (modulo $q$), but we also have to deal with \enquote{strips modulo
$q$}. Let us give the formal definitions. Given a cylinder $C$ in $S$ with core
curve $\gamma_C$, we say that
\begin{enumerate}[(a)]
    \item $C$ is \emph{good modulo $q$} if $\pairing{\gamma_C, k} \modn{q} 0$
    for all $k\in K$. 
    \item $C$ is \emph{bad modulo $q$} if $C$ is not good modulo $q$, but $\pairing{\gamma_C, \eta} \modn{q} 0$.
    \item $C$ is a \emph{strip modulo $q$} if $\pairing{\gamma_C, \eta} \nmodn{q} 0$.
\end{enumerate}

We remark that both good, and bad $\bmod\;q$ cylinders lift as $q$ isometric
copies to $\qS$, while strips $\bmod\;q$ lift as one cylinder $q$-times longer.

\begin{lemma}\label{lemma:good-q}
For every prime $q$ large enough, a cylinder is good modulo $q$ if and only if it is good.
\end{lemma}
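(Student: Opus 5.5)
The plan is to work entirely in the lattice $K=H_1^{(0)}(S;\ZZ)\cong\ZZ^2$ (in this section $S\in\cH(2)$, so $K$ is the full zero holonomy part) and to reduce both conditions to a divisibility statement about a single integer vector. One implication is immediate: if $C$ is good, then $\pairing{\gamma_C,k}=0$ for all $k\in K$, hence $\pairing{\gamma_C,k}\modn{q}0$ for every $q$. So all the work is in the converse.

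For the converse, fix a basis $k_1,k_2$ of $K$ and consider the integer vector
\[
v_C=\bigl(\pairing{\gamma_C,k_1},\pairing{\gamma_C,k_2}\bigr)\in\ZZ^2 .
\]
By definition, $C$ is good modulo $q$ if and only if $q$ divides both entries of $v_C$, and $C$ is good if and only if $v_C=0$. It therefore suffices to show that, as $C$ ranges over all (infinitely many) cylinders of $S$, the nonzero vectors $v_C$ have greatest common divisor of their entries bounded by a constant $q_0$ depending only on $S$. Once this is known, any prime $q>q_0$ that divides both entries forces $v_C=0$.

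To get this uniform bound, I would use that $\gamma_C$ is the core curve of a cylinder, so it is a simple closed curve and hence a primitive class in $H_1(S;\ZZ)$. This is where genus $2$ enters: two disjoint homologous cylinders cannot occur, by the argument of \Cref{lemma:good_are_one-cyl}. Next I would project $\gamma_C$ onto the zero holonomy part using the splitting \cref{eq:homology_splitting}, which is defined over $\QQ$. Since the splitting is of finite index over $\ZZ$, with index $m$ depending only on $S$ (essentially a power of the number of squares, given that the tautological plane is spanned by the duals of $\operatorname{Re}\omega$ and $\operatorname{Im}\omega$), the zero holonomy component of $m\gamma_C$ is an integral vector. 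Because the intersection pairing restricted to the zero holonomy part is nondegenerate with determinant $d$ depending only on $S$, the vector $v_C$ is, up to an invertible change of coordinates over $\ZZ[1/(md)]$, this component. Primitivity of $\gamma_C$ then implies that the content of its zero holonomy component divides a constant depending only on $m$ and $d$, unless that component vanishes. So the gcd of the entries of $v_C$ divides some $q_0=q_0(S)$, and for primes $q\nmid q_0$ the statement follows.

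The main obstacle is exactly this step: showing that the zero holonomy component of a primitive class cannot have large content. The worry is that its tautological component could absorb divisibility. To rule this out I would argue directly: if $\gamma_C=t+z$ with $t$ tautological and $z=qz'$ for a large prime $q$, then $m\gamma_C\equiv mt\pmod q$, and $mt$ lies in the rank-$2$ lattice $H_1^{\text{st}}\cap H_1(S;\ZZ)$. One then needs that a primitive class congruent mod $q$ to an element of this saturated sublattice must lie in it, which again holds for $q$ not dividing the index. If this bookkeeping becomes messy, a fallback is to use that $\Gamma(S)$ acts on $K$ through a finite index subgroup of $\slz$ and that there are finitely many cusps. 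Each cylinder is then an image under $\Aff(S)$ of one of finitely many representative cylinders, and the action on $K$ preserves the gcd of $v_C$. This gives finitely many gcd values, and $q_0$ can be taken larger than all of them.
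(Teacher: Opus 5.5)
\textbf{Gap in your main route.} The primitivity argument does not work, and the step you flag as the main obstacle is false. Primitivity of $\gamma_C$ in $H_1(S;\ZZ)$ cannot bound the content of $v_C$, whatever finite-index bookkeeping you do for the splitting.

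Here is a counterexample. Take $t$ primitive in the saturated lattice $H_1^{\text{st}}\cap H_1(S;\ZZ)$, and extend it to a basis $t,e_2,e_3,e_4$ of $H_1(S;\ZZ)$. Since the tautological plane has rank $2$, some $e_j\notin H_1^{\text{st}}$. Then $t+qe_j$ is primitive and does not lie in $\ann(K)=H_1^{\text{st}}\cap H_1(S;\ZZ)$. Yet $\langle t+qe_j,k\rangle\equiv\langle t,k\rangle=0 \pmod q$ for every $k\in K$. So your claim that a primitive class congruent mod $q$ to an element of the saturated sublattice must lie in it fails for every $q$, independently of the index $m$.

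Homology alone cannot distinguish core curves of cylinders from arbitrary primitive classes. The lemma needs the geometric fact that the classes realized by cylinders on this particular surface are very special. Incidentally, primitivity of $\gamma_C$ has nothing to do with genus $2$ or with \Cref{lemma:good_are_one-cyl}: a core curve has nonzero holonomy, so it is a nonseparating simple closed curve and hence primitive in any genus.

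\textbf{Your fallback.} This is a complete proof, and it is exactly the argument in the paper. Finitely many cusps, and finitely many cylinders in each periodic direction, give finitely many $\Aff(S)$-orbits of cylinders. Each $f\in\Aff(S)$ preserves the intersection form and the zero holonomy lattice $K$, acting on $K$ by an invertible integral matrix $M$. Hence $v_{f(C)}=\pm v_C M$, and the gcd of the entries is an orbit invariant. Taking $q$ larger than the maximum of these gcds over the finitely many orbits of non-good cylinders finishes the proof.

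Only integrality and invertibility of the action on $K$ are used here, not the fact that its image has finite index in $\slz$. You should make the fallback the proof and drop the primitivity argument.
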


\begin{proof}
$\Gamma(S)$ acts on the set of cylinders of $S$ with finitely many orbits (there are finitely many cusps, and finitely many cylinders in each periodic direction), and $\proj_K$ is $\Gamma(S)$-equivariant.
The set $A$ of all cylinders in $S$ that are not good is $\Gamma(S)$-invariant.
Moreover, $C \in A$ if and only if $\proj_K[\gamma_C] \neq 0$.

Fix a cylinder $C$ in $S$ and let $d(C) \geq1$ be the greatest common divisor of the coordinates of $\proj_K[\gamma_C]$ in any basis of $K = H_1^{(0)}(S;\ZZ)$.
Since $\Gamma(S)$ acts on $K$ by linear automorphisms and $d(C)$ is invariant under multiplication by invertible integral matrices, $d(C)$ only depends on the $\Gamma(S)$-orbit of $C$.
It follows that $d$ takes finitely many values. Let $d_0 = \max\Set{d(C) \given C \in A} < \infty$.
Hence, for every prime $q > d_0$, we have $\proj_K[\gamma_C] \nmodn{q} 0$ for every $C\in A$.
That is, for $q > d_0$, $\proj_K[\gamma_C] \neq 0$ if and only if $\proj_K[\gamma_C] \nmodn{q} 0$, or, what is the same, a cylinder is good modulo $q$ if and only if it is good.
\end{proof}

We are now ready to begin the proof of \Cref{bigthm:large_genus}, assuming that
the base surface is in $\cH(2)$:

\begin{theorem}\label{thm:large_genus_H2}
    Let $S\in\cH(2)$ be a square-tiled surface and let $\eta\in
    H_1(S,\Sigma;\ZZ)$ be a zero holonomy curve. Let $q$ be a prime number and
    $\qS$ the $\ZZ_q$-cover of $S$ determined by $\eta$. Then
    \[
        \frac{1}{q} c(\qS) \xrightarrow{q\to\infty} c(\tS),
    \]
    where $c(\qS)$ is the Siegel--Veech constant associated to $N(\qS,L)$ and
    $\tS$ is the $\ZZ$-cover of $S$ determined by $\eta$.
\end{theorem}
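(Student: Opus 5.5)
The plan is to decompose the cylinder count on $\qS$ according to how the cylinders of $S$ lift. Every maximal cylinder of $\qS$ projects onto a cylinder $C$ of $S$. Conversely:
\begin{itemize}
  \item if $\pairing{\gamma_C,\eta}\modn{q}0$ (good or bad modulo $q$), then $C$ lifts to $q$ disjoint isometric copies;
  \item if $\pairing{\gamma_C,\eta}\nmodn{q}0$ (a strip modulo $q$), then $C$ lifts to a single cylinder of length $q\,|\hol(\gamma_C)|$.
\end{itemize}
Hence
\[
  N(\qS,L) = q\,N^{q}_{\text{good}}(S,L) + q\,N^{q}_{\text{bad}}(S,L) + N^{q}_{\text{strip}}(S,L/q),
\]
with the obvious notation. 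By \Cref{lemma:good-q}, for $q$ large, $N^{q}_{\text{good}}(S,L)=N_{\text{good}}(S,L)$. The cover $\tS$ satisfies $N(\tS,L)=N_{\text{good}}(S,L)+N_{\text{bad}}(S,L)$, and by \Cref{thm:bad_cyls_subquad} we have $c(\tS)=c_{\text{good}}(S)$ (the Siegel--Veech constant of good cylinders, in the same area normalization as the one used for $c(\qS)$). Dividing by $q\pi L^2$ and letting $L\to\infty$, every family has a quadratic asymptotic by Veech, since each is a finite union of $\Gamma(S)$-orbits of cylinders. We obtain
\[
  \tfrac1q c(\qS) = c_{\text{good}}(S) + c^{q}_{\text{bad}}(S) + q^{-3}\,c^{q}_{\text{strip}}(S).
\]
It therefore suffices to show that $c^q_{\text{bad}}(S)\to0$ and $q^{-3}c^q_{\text{strip}}(S)\to 0$ as $q\to\infty$.

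The strip term is easy. Strips modulo $q$ are a subset of all cylinders of $S$, so $c^q_{\text{strip}}(S)\le c_{\text{cyl}}(S)$, a fixed constant. The contribution is thus $O(q^{-3})$; even the cruder bound $O(q^{-2})$ would suffice.

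The main obstacle is the bad-modulo-$q$ term. Here I would use the structure of $\Gamma(S)$ acting on $K=H^{(0)}_1(S;\ZZ)\cong\ZZ^2$. Decompose the non-good cylinders into finitely many $\Gamma(S)$-orbits, indexed by cusps and the cylinders in each cusp direction. For each orbit, the map $C\mapsto \proj_K[\gamma_C]\bmod q$ is $\Gamma(S)$-equivariant into the nonzero vectors of $\ZZ_q^2$. By \cite{Matthews-Vaserstein-Weisfeiler}, for $q$ large the image $\Gamma'$ of $\Gamma(S)$ surjects onto $\SL_2(\ZZ_q)$, which acts transitively on nonzero vectors of $\ZZ_q^2$. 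Also, $\proj_K[\gamma_C]\nmodn{q}0$ for non-good $C$, by the proof of \Cref{lemma:good-q}. The condition $\pairing{\gamma_C,\eta}\modn{q}0$ then cuts out the vectors lying on a line in $\ZZ_q^2$, which is a proportion of about $1/q$ of them. Equidistribution of the orbit counts in cosets is the delicate point. I would realize the count of an orbit by the Veech/Siegel--Veech formula on the finite cover $\mathbb{H}/\Gamma_q$, where $\Gamma_q$ is the stabilizer of the line, of index about $q+1$. The Siegel--Veech constant of the cylinders in a given residue class is then (number of cusps of $\Gamma_q$ above the given cusp, weighted by cusp widths) divided by the covolume. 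This is exactly the kind of computation in \Cref{lemma:SV_1cyl_cusps}. It gives $c^q_{\text{bad}}(S)\le (\text{const})\cdot\#\{\text{cusps of }\Gamma_q\}/[\Gamma':\Gamma_q]$, which should be $O(1/q)$, because cusp widths in the congruence cover are generically $q$.

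If that bound is harder to obtain uniformly, the fallback is to prove only that $c^q_{\text{bad}}(S)\to0$. One would note that $\Gamma_q$-orbits partition the cusp data, with the total mass fixed at $c_{\text{cyl}}(S)$, and that the fraction landing on the line tends to $0$ by transitivity. This mirrors the approach sketched in the introduction of estimating not-good cylinders by a Siegel--Veech constant of the base surface.
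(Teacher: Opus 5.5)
Your decomposition of $N(\qS,L)$ and the identity $\frac1q c(\qS)=c(G)+c(B_q)+q^{-3}c(T_q)$ are exactly the paper's. So are the use of \Cref{lemma:good-q} and the bound on the strip term. The gap is in the one step that carries content, $c(B_q)\to 0$. You correctly identify transitivity of $\Gamma(S)$ on $(K/qK)\setminus\{0\}$ as the tool. But you then call equidistribution among residue classes ``the delicate point'' and never prove it, and your fallback sentence simply asserts it.

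What is missing is a short invariance argument. For $k\neq 0$ let $C_q(k)$ be the cylinders with $\proj_K[\gamma_C]\modn{q}k$. These families are invariant only under finite-index subgroups of $\Gamma(S)$, not under $\Gamma(S)$ itself, and an affine diffeomorphism $g$ maps $C_q(k)$ onto $C_q(gk)$. Counting $gC_q(k)$ in $B_L$ is counting $C_q(k)$ in the ellipse $(Dg)^{-1}B_L$, which has the same area. Counts of lattice orbits are asymptotically proportional to area; equivalently, conjugating by $g$ preserves the covolume and cusp data in Veech's formula. Hence all $q^2-1$ classes have the same constant. The bad-mod-$q$ classes are the $q-1$ nonzero points of the line $\ZZ_q\eta=\eta^{\perp}$, so $c(B_q)=\frac{q-1}{q^2-1}c(A)=\frac{c(A)}{q+1}$ exactly. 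This is how the paper concludes.

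Your $\Gamma_q$ route can also be completed, but the reason you give points the wrong way. The inequality $c(B_q)\le \mathrm{const}\cdot\#\{\text{cusps of }\Gamma_q\}/[\Gamma(S):\Gamma_q]$ needs the cusps of $\Gamma_q$ carrying bad cylinders to have width bounded independently of $q$. That is the opposite of ``cusp widths are generically $q$''. If the bad cusps had width of order $q$, the width-weighted sum would be $O(1)$, not $O(1/q)$.

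The correct picture is as follows. In a two-cylinder direction with heights $h_1,h_2$, the class $h_1\gamma_1+h_2\gamma_2$ is stable (it is Poincaré dual to the transverse form). Hence both $\proj_K\gamma_i$ lie on a common line $[\kappa]$. The parabolic multitwist then acts on $K$ as a nontrivial transvection $x\mapsto x+m\langle\kappa,x\rangle\kappa$ with $m>0$. For $q$ large it fixes $[\kappa]$ and permutes the other $q$ lines of $K/qK$ cyclically. So above each non-good cusp of $\Gamma(S)$, $\Gamma_q$ has exactly two cusps:
\begin{itemize}
\item one of the \emph{same} width, containing precisely the bad-mod-$q$ cylinders;
\item one of width $q$ times larger, containing the strips.
\end{itemize}
With $\operatorname{covol}(\Gamma_q)=(q+1)\operatorname{covol}(\Gamma(S))$, Veech's cusp formula gives $c(B_q)=c(A)/(q+1)$ and $c(T_q)=q\,c(A)/(q+1)$. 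This recovers the paper's formulas with explicit cusp information, at the cost of a longer argument.
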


\begin{proof}
Let $A$ be the set of all cylinders in $S$ that are not good ($A$ is the set of
cylinders in a two-cylinder decomposition), $B_q$ the set of all bad cylinders
$\bmod\;q$, and $C_q(k)$, the set of all cylinders $C$ in $S$ such that
$\proj_K[\gamma_C] \modn{q} k$, where $k \in K$. By \Cref{lemma:good-q}, for $q$
large enough, it follows that
\[
 	A
	\quad =\quad
    \bigcup_{\mathclap{[k]_q\in K^\times_S(\ZZ_q)}} \quad C_q(k),
\]
where $K^\times_S(\ZZ_q)$ stands for the set of unimodular elements in
$K/\modn{q} \cong \ZZ_q^2$. Notice that $A$ is the union of $q^2-1$ classes.
Similarly, $B_q$ is the union of $q-1$ classes: since $K$ is symplectic, the
condition $ \langle h, \eta \rangle \modn{q} 0$ is of codimension~$1$ in
$K(\ZZ_q)$. 

By the discussion above \Cref{lemma:good-q}, for $q$ large enough, $\Gamma(S)$ acts transitively on $K^\times_S(\ZZ_q)$, all $\ZZ_q$-covers (defined by cocycles in $K$) are in the same orbit, and the Siegel--Veech constants for
$C_q(k)$ do not depend on $k \in K$. Thus, we call it $c(C_q)$. Then, we have
that
\begin{equation}\label{eq:SV_notgood}    
	c(A) = (q^2-1) c(C_q)
    \qquad\text{and}\qquad
	c(B_q) = (q-1) c(C_q).
\end{equation}
Also, let $G$ be the set of good cylinders in $S$. Recall that the Siegel--Veech
constant $c(G)$ (resp.\ $c(A)$) corresponds to the counting of cylinders in
one-cylinder decompositions (resp.\ two-cylinder decompositions\footnote{In
principle, one could compute the constant $c(A)$ as in the one-cylinder case,
and that most of the work is already done in~\cite{Lelievre}. We do not do this,
since we do not need it.}).

We now lift these constants to $\qS$. We consider that the area of $\qS$ is $q$ times the area of $S$. Then, cylinders in $G$ and in $B_q$ lift as $q$~isometric copies to $\qS$, and we obtain
\[
    c(\widetilde G) = q c(G)
    \qquad\text{and}\qquad
    c(\widetilde B_q) = q c(B_q),
\]
where, for a family of cylinders $X$ in $S$, $\widetilde X$ denotes the lift of
$X$ to $\qS$.

Now, let $T_q$ be the set of strips mod~$q$ in $S$. Then $T_q = A \setminus
B_q$, and 
\[
    c(T_q) = c(A) - c(B_q).
\]
As we recalled previously, cylinders in $T_q$ lift as one cylinder $q$ times
longer. It follows that
\[
    q^2 c(\widetilde T_q) = c(T_q).
\]
    
Now, the Siegel--Veech constant for $\qS$ satisfies $c(\qS) = c(\widetilde G) +
c(\widetilde B_q) + c(\widetilde T_q)$. In fact, every cylinder in $\qS$
descends to a cylinder in $S$, and $\{G, B_q, T_q\}$ is a partition of all
cylinders in $S$. By the previous discussion, it follows that
\begin{equation}\label{eq:q-Siegel--Veech}
    c(\qS) = q c(G) + q c(B_q) + \frac{1}{q^2} c(T_q),
\end{equation}
where $c(G)$, $c(B_q)$ and $c(T_q)$ are Siegel--Veech constants associated with
$G$, $B_q$ and $T_q$ in $S$, respectively. Moreover, by \cref{eq:SV_notgood}, we
have, for $q$ prime large enough, that
\begin{equation}\label{eq:q-bad-Siegel--Veech}
    c(B_q) = \frac{q-1}{q^2-1} c(A) = \frac{1}{q+1} c(A)
    \qquad\text{and}\qquad
    c(T_q) = \frac{q}{q+1} c(A).
\end{equation}
Finally,
\[
    c(\qS) = q c(G) + \frac{q^2+1}{q(q+1)} c(A),
\]
and
\[
    \lim_{\substack{q\to\infty \\ q \text{ prime}}} \frac{1}{q} c(\qS) = c(G) = c(\tS),
\]
which was what we wanted to show.
\end{proof}

\section{\texorpdfstring{$\ZZ$}{Z}-covers of square-tiled surfaces in
\texorpdfstring{$\cH(1,1)$}{H(1,1)}}\label{sec:H11}

In this section, we consider the case when $S\in\cH(1,1)$, dealing with relative
homology. We begin by showing the analogue of \Cref{thm:bad_cyls_subquad} for
$\cH(1,1)$, that is, that bad cylinders grow subquadratically.

\begin{theorem}\label{thm:bad_cyls_subquad_H11}
    Let $S$ be a surface in $\cH(1,1)$. Then, the number of bad cylinders with
    core curve of length at most $L$, $N_\text{bad}(S,L)$ grows
    subquadratically: $N_\text{bad}(S,L) = o(L^2)$.
\end{theorem}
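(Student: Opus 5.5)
The plan is to use the finite cyclic covers of \Cref{sec:q-covers}, as announced in the sketch, instead of the symplectic argument of~\cite{Pardo:counting}. Set $K$ as in the definition of good and bad cylinders and $\proj_K$ the projection to $K$ along the tautological plane, given by \cref{eq:homology_splitting}. A bad cylinder $C$ is one with $\proj_K[\gamma_C]\neq 0$ and $\pairing{\gamma_C,\eta}=0$. For each prime $q$, reduce modulo $q$. Every bad cylinder then satisfies $\pairing{\gamma_C,\eta}\modn{q}0$. By the analogue of \Cref{lemma:good-q}, whose proof works verbatim in $\cH(1,1)$ because $\Gamma(S)$ acts on cylinders with finitely many orbits, for $q$ large $\proj_K[\gamma_C]\nmodn{q}0$. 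Therefore
\[
N_{\text{bad}}(S,L)\le \#\Set{C \given \ell(C)\le L,\ \proj_K[\gamma_C]\nmodn{q}0,\ \pairing{\gamma_C,\eta}\modn{q}0}
\]
for every large prime $q$. The right-hand side has quadratic asymptotics by Veech~\cite{Veech89}, since it is a $\Gamma(S)$-invariant family of cylinders on a Veech surface, namely a union of cusp orbits. Call its constant $c(B_q)$. It then suffices to prove $c(B_q)\to 0$ as $q\to\infty$, because $\limsup N_{\text{bad}}(S,L)/\pi L^2\le c(B_q)$ for every large $q$.

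To bound $c(B_q)$, I would argue as in the proof of \Cref{thm:large_genus_H2}. Let $A$ be the set of not-good cylinders. This is a finite union of $\Gamma(S)$-orbits of cylinders, so $c(A)\le c_{\text{cyl}}(S)<\infty$. Partition $A$ according to the class $[\proj_K\gamma_C]_q\in K\otimes\ZZ_q$, which is nonzero for $q$ large. Then $B_q$ is the union of the classes lying in the hyperplane $\eta^\perp \bmod q$. If the image $\Gamma'$ of $\Gamma(S)$ acting on $K\otimes\ZZ_q$ acts transitively on nonzero vectors, or at least with a bounded number of orbits of comparable size, then every class carries the same Siegel--Veech constant. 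This gives
\[
c(B_q)\approx\frac{\#(\eta^\perp\setminus 0)}{\#(K\otimes\ZZ_q\setminus 0)}\,c(A)=\frac{q^{r-1}-1}{q^r-1}\,c(A)\to 0,
\]
where $r=\operatorname{rank}K\in\{1,2,3\}$. The case $r=1$ needs separate treatment. There $K=\ZZ\eta$, since $K$ is $\Gamma(S)$-invariant and contains $\eta$, so $\pairing{\gamma_C,\eta}=0$ already forces goodness and there are no bad cylinders at all. The case $K=H_1^{(0)}(S;\ZZ)$ of rank $2$, which is symplectic, is handled exactly as in $\cH(2)$. In the rank-$3$ case, which contains relative classes, the form restricted to $K$ is degenerate. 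The condition $\pairing{\cdot,\eta}\equiv 0$ is still codimension one modulo $q$ as long as $\eta$ is not in the radical of the form on $K$. If it is in the radical, then again every cylinder with $\pairing{\gamma_C,\eta}=0$ should be shown to be good.

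The main obstacle is the equidistribution step: the Siegel--Veech constant of $C_q(k)$ must be independent of $k$, or at least uniformly comparable across $k$, when $K$ involves relative homology. Here I would invoke the arithmeticity of the Kontsevich--Zorich monodromy in genus $2$~\cite{Betal}, which says that the Zariski closure of $\Gamma'$ is the full automorphism group of $K$ preserving the intersection data. I would then apply strong approximation, in the form of \cite[Theorem~0]{Matthews-Vaserstein-Weisfeiler}, to get surjectivity of $\Gamma'$ onto the mod-$q$ points of that group for $q$ coprime to some $q_0$. That group acts on $K\otimes\ZZ_q\setminus 0$ with a bounded number of orbits, for example ``radical'' and ``non-radical'' vectors. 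I would bound $c(B_q)$ orbit by orbit, using that each orbit is large compared with its intersection with $\eta^\perp$, or is contained in the radical, where the previous paragraph applies. If the orbit sizes are not balanced, a cruder fallback is to bound $c$ of each class by $c(A)$ divided by the size of its orbit. This still gives decay of order $1/q$, which is enough.
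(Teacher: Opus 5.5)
Your reduction is sound, and it is what the paper does in its non-split case: for large primes $q$ every bad cylinder is bad modulo $q$, so $\limsup_L N_{\text{bad}}(S,L)/\pi L^2\le c(B_q)$, and it suffices to show $c(B_q)\to 0$. (A small correction: $B_q$ is invariant only under the finite-index stabilizer of $\eta \bmod q$, not under all of $\Gamma(S)$, but that still gives Veech asymptotics.)

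\textbf{The gap: the split case is missing.} Möller's arithmeticity \cite[Theorem~38]{Betal} concerns only the action on the absolute part $H_1^{(0)}(S;\ZZ)$. On $H_1^{(0)}(S,\Sigma;\ZZ)$ the image lies in $\slz\rtimes\ZZ^2$, and its translation part may be trivial. This is exactly the split case \cref{eq:equivariant_splitting}, which the paper treats separately: it projects $\eta$ to absolute homology and invokes \cite[Theorem~4.1]{Pardo:counting}. Suppose $\eta=\eta_a+t r$, with $r$ the invariant class spanning the relative summand and $\eta_a\neq 0\neq t$. Then $K$ still has rank $3$, but the Zariski closure of the action is only $\SL_2$ acting block-diagonally. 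Consequently:
\begin{enumerate}
\item your claim that the Zariski closure is the full automorphism group of $K$ is false;
\item the action modulo $q$ has about $2q$ orbits, not boundedly many;
\item your fallback (each class weighs at most $c(A)/|\text{orbit}|$) fails: there are about $q^2$ bad classes, all in orbits of size $q^2-1$, so the sum only gives $c(B_q)\le c(A)$, with no decay.
\end{enumerate}

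\textbf{A related problem with the formulation.} When $K$ contains relative classes there is no intersection form on $K$, so $\proj_K[\gamma_C]$ and \enquote{the radical of the form on $K$} are not meaningful. The pairing of $\gamma_C$ with relative classes is defined on $H_1(S\setminus\Sigma;\ZZ)$ and does not factor through $H_1(S;\ZZ)$. The right invariant is the functional $f_C=\pairing{\gamma_C,\cdot}|_K\in K^*$. The gcd argument of \Cref{lemma:good-q} works verbatim for $f_C$. Moreover, $\eta^\perp=\{f : f(\eta)\modn{q}0\}$ is always a hyperplane in $K^*\otimes\ZZ_q$, so your unproved \enquote{$\eta$ in the radical} subcase disappears.

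\textbf{How to close the gap.} Replace the equidistribution step by the weighted bound
\[
c(B_q)=\sum_O c(O)\,\frac{|O\cap\eta^\perp|}{|O|}\le c(A)\max_O \frac{|O\cap\eta^\perp|}{|O|},
\]
where $O$ runs over the $\Gamma(S)$-orbits in $K^*\otimes\ZZ_q\setminus\{0\}$. Since $\sum_O c(O)\le c(A)$, the number of orbits is irrelevant. Then compute the orbits in both cases. Write $f=(f_a,f_r)$, where $f_a$ is the restriction to $H_1^{(0)}(S;\ZZ)$ and $f_r$ the value on a relative class.
\begin{enumerate}
\item The points $(0,s)$ are fixed by everything. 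For $s\neq 0$ they satisfy $f(\eta)=ts\nmodn{q}0$, so they are never bad.
\item In the non-split case the only other orbit is $\{f_a\neq 0\}$, and it meets $\eta^\perp$ in a proportion $1/q$.
\item In the split case the other orbits are $\{(f_a,s): f_a\neq 0\}$, one for each $s$, and each meets $\eta^\perp$ in a proportion at most $q/(q^2-1)$.
\end{enumerate}
This gives $c(B_q)=O(c(A)/q)$ in both cases. It is a legitimate uniform alternative to the paper's appeal to \cite{Pardo:counting} in the split case.
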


\begin{proof}
    By \cref{eq:homology_splitting} on \cpageref{eq:homology_splitting}, the
    absolute homology of $S$ splits into 
    \[
        H_1(S;\ZZ) = H_1^{\text{st}}(S;\ZZ) \oplus H_1^{(0)}(S;\ZZ).
    \]
    Hence, once we identify $H_1^{\text{st}}(S;\ZZ)$ with $\ZZ^2$, the action of
    the Veech group $\Gamma(S)$ on is simply the standard action of $\slz$ on
    $\ZZ^2$.

    There is a short exact sequence
    \begin{equation}\label{eq:short_exact_sequence}
        0 \to H_1 (S;\ZZ) \hookrightarrow H_1 (S, \Sigma; \ZZ) \twoheadrightarrow \widetilde{H}_0 (\Sigma;\ZZ) \to 0,
    \end{equation}
    where $\widetilde{H}_0 (\Sigma;\ZZ)$ is the $0^\text{th}$ reduced homology
    of $\Sigma$, meaning the assignation of integral weights to points in
    $\Sigma$ whose sum is $0$. Since $|\Sigma| = 2$, $\widetilde{H}_0
    (\Sigma;\ZZ) \cong \ZZ$.

    Let us denote by $\rho_\Gamma^{(0)}$ the representation of $\Gamma(S)$, the
    Veech group of $S$, obtained by looking at its action on the zero holonomy
    relative homology $H_1^{(0)}(S, \Sigma; \ZZ)$. For more details on this
    action, we refer to~\cite[Section~5]{Hooper-Weiss}. By looking at the
    analogue of~\eqref{eq:short_exact_sequence} on the zero holonomy homology,
    we see that the image of $\rho_\Gamma^{(0)}$ lies inside $\slz\rtimes\ZZ^2$.

    By a theorem of Möller (see~\cite[Theorem~38]{Betal}), since $S$ is of genus
    $2$, the action of $\Gamma(S)$ on $H_1^{(0)}(S;\ZZ)$ is \emph{arithmetic},
    meaning that it forms a finite index subgroup of $\slz$. This implies, that
    the image of $\rho^{(0)}_\Gamma$ is the semidirect product of a finite index
    subgroup of $\slz$ with a lattice inside of $\ZZ^2$.

    We now have two cases: if the lattice is trivial, then we have a $\Gamma(S)$-invariant splitting of the zero holonomy homology\footnote{By a result of Filip~\cite[Theorem~5.4.7]{Filip:survey}, this \emph{split} case can only happen for surfaces lying in a finite number of \enquote{atypical} affine invariant submanifolds of $\cH(1,1)$; these submanifolds may, however, contain infinitely many $\slz$-orbits of square-tiled surfaces.
    In particular, this is the case whenever the surface $S$ is a double cover of a surface in $\cH(0,0)$, ramified over the marked points.}
    \begin{equation}\label{eq:equivariant_splitting}        
        H^{(0)}_1 (S, \Sigma; \ZZ) = H^{(0)}_1 (S; \ZZ) \oplus H^{(0), \text{rel}}_1 (S, \Sigma; \ZZ),
    \end{equation}
    otherwise, we obtain a finite index subgroup of the whole semidirect product
    $\slz\rtimes\ZZ^2$.

    In the split case, if $\eta \in H^{(0), \text{rel}}_1 (S, \Sigma; \ZZ)$, then we have no bad cylinders and we are done.
    Otherwise, we can project the curve
    $\eta$ defining the $\ZZ$-cover non-trivially to the absolute homology $H^{(0)}_1 (S; \ZZ)$ and the result follows from~\cite[Theorem~4.1]{Pardo:counting}.

    We now turn to the latter \emph{non-split} case, in which the image of
    $\rho_\Gamma^{(0)}$ is a finite index subgroup of $\slz\rtimes\ZZ^2$. This
    implies that the Zariski closure of this subgroup is the whole
    $\slr\rtimes\RR^2$. Then, using strong approximation as in the beginning of
    \Cref{sec:q-covers}, for any sufficiently large prime $q$, the action of
    $\Gamma(S)$ on the zero holonomy homology modulo $q$, $H^{(0)}_1 (S, \Sigma;
    \ZZ_q)$ is transitive. This implies, using the terminology of~\cite{ACMSS},
    that every orbit is \emph{unsplit}, following their~Proposition~4.2. With
    this, the proofs of~\cite[Propositions~4.5 and~4.6]{ACMSS} apply almost
    verbatim, replacing their~$H_1^{\text{prim}}(X,\mathcal{B};\ZZ/p^k\ZZ)$ by
    $H^{(0)}_1(S,\Sigma;\ZZ_q)\setminus\{0\}$, with $q$ prime large enough. This
    show that bad cylinders modulo $q$ have codimension $1$ inside the homology,
    and their number is $O(1/q)$. In particular, bad cylinders grow
    subquadratically.
\end{proof}

A byproduct of the previous proof is that \Cref{bigthm:large_genus} holds in the
case when the base is in $\cH(1,1)$ following the proof of
\Cref{thm:large_genus_H2} for the case when the base is in $\cH(2)$.

\begin{corollary}\label{cor:large_genus_H11}
    Let $S\in\cH(1,1)$ be a square-tiled surface and let $\eta\in
    H_1(S,\Sigma;\ZZ)$ be a zero holonomy curve. Let $q$ be a prime number and
    $\qS$ the $\ZZ_q$-cover of $S$ determined by $\eta$. Then
    \[
        \frac{1}{q} c(\qS) \xrightarrow{q\to\infty} c(\tS),
    \]
    where $c(\qS)$ is the Siegel--Veech constant associated to $N(\qS,L)$ and
    $\tS$ is the $\ZZ$-cover of $S$ determined by $\eta$.
\end{corollary}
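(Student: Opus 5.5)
We mimic the proof of \Cref{thm:large_genus_H2}, partitioning the cylinders of $S$ into good cylinders $G$, bad cylinders modulo $q$ $B_q$, and strips modulo $q$ $T_q$. Since $\{G, B_q, T_q\}$ partitions the cylinders of $S$, and good and bad modulo $q$ cylinders lift as $q$ isometric copies while strips lift as single cylinders $q$ times longer, the same bookkeeping (with $\area(\qS) = q\area(S)$) gives
\[
    c(\qS) = q\, c(G) + q\, c(B_q) + \frac{1}{q^2} c(T_q).
\]
First, we check that \Cref{lemma:good-q} still holds when $K$ is the smallest $\Gamma(S)$-invariant subspace of $H_1^{(0)}(S,\Sigma;\ZZ)$ containing $\eta$, possibly a proper one in the split case. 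The argument there only used that $\Gamma(S)$ has finitely many orbits of cylinders and acts on $K$ by integral automorphisms, so the gcd $d(C)$ of the coordinates of $\proj_K[\gamma_C]$ takes finitely many values. Hence, for $q$ large, good modulo $q$ coincides with good, and $c(G) = c(\tS)$ by \Cref{thm:bad_cyls_subquad_H11} together with the decomposition $N(\tS,L) = N_{\text{good}} + N_{\text{bad}}$.

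It then suffices to show that $q\, c(B_q) \to 0$ fails to be needed. Only $c(B_q) \to 0$ times $q$ must be controlled, together with $c(T_q)/q^3 \to 0$. The latter is immediate, since $c(T_q) \le c(A)$, where $A$ is the set of non-good cylinders, which has a finite Siegel--Veech constant on the Veech surface $S$. The main obstacle is the bound
\[
    c(B_q) = O\!\left(\frac{1}{q}\right)\cdot c(A),
\]
which is not enough as written, because we need $q\,c(B_q) \to 0$ after dividing $c(\qS)$ by $q$. Indeed, $\frac1q c(\qS) = c(G) + c(B_q) + q^{-3}c(T_q)$, so what is actually needed is only $c(B_q) \to 0$.

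We split into the two cases of the proof of \Cref{thm:bad_cyls_subquad_H11}.

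In the \emph{non-split case}, strong approximation makes $\Gamma(S)$ act transitively on the nonzero vectors of $K(\ZZ_q) = H^{(0)}_1(S,\Sigma;\ZZ_q)$ for large $q$. Following \cref{eq:SV_notgood}, all classes $C_q(k)$ then share a common constant. $A$ is the union of $|K(\ZZ_q)|-1 \asymp q^3$ classes, while $B_q$ is cut out by the single linear condition $\pairing{\cdot,\eta}\modn{q}0$ and so contains $\asymp q^2$ classes. Hence $c(B_q) \asymp c(A)/q \to 0$; this is precisely the $O(1/q)$ statement extracted from \cite[Propositions~4.5 and~4.6]{ACMSS}. One subtlety is that the pairing of absolute cycles $\gamma_C$ with relative $\eta$ is not symplectic, so we must check that the condition is still a genuine codimension-one condition. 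This holds because $\eta\nmodn{q}0$ and the pairing $H_1(S\setminus\Sigma)\times H_1(S,\Sigma)$ is non-degenerate.

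In the \emph{split case}, $K$ is either contained in the relative summand, where there are no bad cylinders and $B_q=\emptyset$, or it projects isomorphically to the absolute part $H_1^{(0)}(S;\ZZ)\cong\ZZ^2$ with an arithmetic $\Gamma(S)$-action. In the second situation the exact computation \cref{eq:q-bad-Siegel--Veech} of \Cref{thm:large_genus_H2} applies verbatim, giving $c(B_q) = c(A)/(q+1)$.

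In both cases $\frac1q c(\qS) \to c(G) = c(\tS)$.
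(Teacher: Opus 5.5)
Your reduction is the paper's. The paper reruns the proof of \Cref{thm:large_genus_H2}, so that $\frac1q c(\qS)=c(G)+c(B_q)+q^{-3}c(T_q)$ with $c(T_q)\le c(A)$ and $c(G)=c(\tS)$, and then uses the $O(1/q)$ bound on bad-mod-$q$ cylinders from the proof of \Cref{thm:bad_cyls_subquad_H11}. Your extension of \Cref{lemma:good-q} is fine. The paragraph about $q\,c(B_q)$ is garbled, but you end with the right requirement, $c(B_q)\to 0$.

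\textbf{Transitivity in the non-split case.} Your claim that $\Gamma(S)$ acts transitively on $H_1^{(0)}(S,\Sigma;\ZZ_q)\setminus\{0\}$ is false. The sketch in the proof of \Cref{thm:bad_cyls_subquad_H11} asserts the same thing, but it cannot hold literally. $\Gamma(S)$ preserves the absolute plane $H_1^{(0)}(S;\ZZ_q)$ and acts trivially on the quotient $\widetilde{H}_0(\Sigma)$, since affine maps fix $\Sigma$. On the objects that actually sort cylinders, namely the functionals $f_C=\pairing{\gamma_C,\cdot}$ on $K$, the mod-$q$ image $\SL_2(\ZZ_q)\ltimes\ZZ_q^2$ has two kinds of orbits. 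There is one orbit of size $q^3-q$, made of the functionals that are nonzero on the absolute plane. There are also $q-1$ fixed points, the multiples $s\,\partial$ of the boundary functional $\partial$ (the map $\kappa\mapsto$ coefficient of a fixed zero in $\partial\kappa$). So the common-constant bookkeeping of \cref{eq:SV_notgood} does not cover these fixed classes. The repair is short:
\begin{enumerate}
\item For a cylinder with $f_C=s\,\partial$, the integer $s$ is $\Gamma(S)$-invariant, so it takes finitely many values.
\item For such a cylinder, $\pairing{\gamma_C,\eta}=s\,\partial(\eta)$ is a fixed nonzero integer, so it is a strip mod $q$ for $q$ large. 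This assumes $\eta$ is not absolute. If $\eta\in H_1^{(0)}(S;\ZZ)$, then $K$ is the absolute plane and the $\cH(2)$ count applies verbatim.
\item In the big orbit, each nonzero absolute part admits exactly one residue of $s$ satisfying the bad condition, because $\partial(\eta)\nmodn{q}0$. Hence $q^2-1$ of the $q^3-q$ classes are bad, and $c(B_q)=c(A_1)/q\le c(A)/q$, where $A_1$ is the set of cylinders whose functional is nonzero on the absolute plane.
\end{enumerate}

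\textbf{The split case.} Your dichotomy there is incomplete. The absolute summand of \cref{eq:equivariant_splitting} is a nontrivial irreducible and the relative summand $\QQ e_0$ is trivial. So the invariant subspaces are $0$, the two summands, and their sum. If $\eta=\eta_{\mathrm{abs}}+t\,e_0$ with $\eta_{\mathrm{abs}}\neq 0\neq t$, then $K$ has rank $3$. It is neither contained in the relative summand nor projects isomorphically onto the absolute one, so \cref{eq:q-bad-Siegel--Veech} does not apply. The same kind of count closes this case:
\begin{enumerate}
\item The value $s_C=\pairing{\gamma_C,e_0}$ is $\Gamma(S)$-invariant, so it takes finitely many values.
\item For fixed $s_C$, the $q^2-1$ classes indexed by the nonzero residue of the absolute part of $f_C$ have equal Siegel--Veech constants.
\item The bad condition $f_C(\eta_{\mathrm{abs}})\modn{q} -s_C t$ is an affine line in $\ZZ_q^2$, which contains at most $q$ of these residues.
\item Cylinders whose functional vanishes on the absolute plane satisfy $\pairing{\gamma_C,\eta}=s_C t\neq 0$, so they are strips for $q$ large.
\end{enumerate}
This gives $c(B_q)\le c(A)/(q-1)\to 0$. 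With these two repairs your argument is complete.
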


Combining \Cref{cor:large_genus_H11} with \Cref{thm:large_genus_H2} we have
completed the proof of \Cref{bigthm:large_genus}. The above proves
\Cref{bigthm:asymptotic} for the last open case: when the base surface is in
$\cH(1,1)$.

\begin{theorem}
    Let $S$ be a square-tiled surface in $\cH(1,1)$ and consider a zero holonomy
    $\ZZ$-cover $\tS$. Then, the number $N(\tS,L)$ of maximal cylinders of
    closed geodesics of length at most $L$ in $\tS$, up to $\ZZ$-translations
    has quadratic growth:
    \[
        N(\tS,L) \sim c(\tS) \pi L^2,
    \]
    and the Siegel--Veech constant $c(\tS)$ which counts good cylinders in $S$.
\end{theorem}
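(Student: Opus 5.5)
The plan mirrors the $\cH(2)$ argument, with \Cref{thm:bad_cyls_subquad_H11} in place of \Cref{thm:bad_cyls_subquad}. By the discussion in \Cref{sec:Zcovers}, a maximal cylinder of $\tS$, taken up to the $\ZZ$-action, projects to a cylinder $C$ of $S$ with $\pairing{\gamma_C,\eta}=0$, and it has the same core length. Conversely, each such $C$ lifts to exactly one $\ZZ$-orbit of cylinders in $\tS$. Cylinders with $\pairing{\gamma_C,\eta}\neq 0$ lift to infinite strips and contribute nothing. This gives the decomposition
\[
    N(\tS,L) = N_{\text{good}}(S,L) + N_{\text{bad}}(S,L),
\]
where ``good'' and ``bad'' are taken with respect to the smallest $\Gamma(S)$-invariant subspace $K$ containing $\eta$. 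Note that $K$ may now be a proper subspace, for instance in the split case.

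The first step is to discard the bad term. By \Cref{thm:bad_cyls_subquad_H11} we have $N_{\text{bad}}(S,L)=o(L^2)$. That proof already handles both the split and the non-split behaviour of $\rho^{(0)}_\Gamma$. In the split case with $\eta$ purely relative, there are no bad cylinders at all.

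The second step is to show that good cylinders have a quadratic asymptotic. The set $G$ of good cylinders is $\Gamma(S)$-invariant: the condition $\gamma_C\in\ann(K)$ is preserved because $K$ is $\Gamma(S)$-invariant and affine maps preserve the intersection pairing. Since $S$ is square-tiled, it is a Veech surface, and $\Gamma(S)$ has finitely many cusps with finitely many cylinders in each periodic direction. Hence $G$ is a finite union of $\Gamma(S)$-orbits of cylinders. For each orbit, Veech's theorem~\cite{Veech89} (equivalently \cite[Proposition~3.10]{Veech89}, as in \Cref{lemma:SV_1cyl_cusps}) gives
\[
    \#\{C\in\text{orbit} : |\hol(\gamma_C)|\le L\} \sim c_{\text{orbit}}\,\pi L^2,
\]
with a positive constant determined by the cusp width and the cylinder holonomy. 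Summing over the finitely many orbits yields $N_{\text{good}}(S,L)\sim c(\tS)\pi L^2$, where $c(\tS)=\sum c_{\text{orbit}}\ge 0$. The sum is zero when $G$ is empty, and then the conclusion reads $N=o(L^2)$.

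Combining the two steps gives $N(\tS,L)=c(\tS)\pi L^2+o(L^2)$, with $c(\tS)$ the good-cylinder constant, as claimed.

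The only potential subtlety is the identification in the first step when $K$ is strictly smaller than $H_1^{(0)}$. Here one must check that the partition of $\eta$-orthogonal cylinders into good and bad is exactly the one controlled by \Cref{thm:bad_cyls_subquad_H11}. I would verify this by re-reading that proof with $K$ in place of $H_1^{(0)}(S,\Sigma;\ZZ)$. In the split case the bad cylinders are governed by the projection of $\eta$ to absolute homology, which is what \cite[Theorem~4.1]{Pardo:counting} controls. Everything else is bookkeeping.
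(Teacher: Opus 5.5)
Your proposal is correct and is essentially the paper's argument. You write $N(\tS,L)=N_{\text{good}}(S,L)+N_{\text{bad}}(S,L)$, discard the bad term by \Cref{thm:bad_cyls_subquad_H11}, and get the quadratic asymptotic for good cylinders from Veech's theorem on the finitely many $\Gamma(S)$-orbits of good cylinders; the subtlety you flag at the end needs no extra check, since \Cref{thm:bad_cyls_subquad_H11} is stated for any zero holonomy $\eta$ with good and bad taken relative to $K$, split case included.
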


Going back to the staircase surfaces, in~\cite{Hubert-Schmithusen} it is shown
that, if $a$ or $b$ are even, then there are $1$-cylinders directions. We show
that there are \emph{no} $1$-cylinder directions in the other cases.

\begin{proposition}\label{prop:no_1cyl}
    Let $a\ge 2$ and $b\ge 0$ two odd integers. Then, there are no $1$-cylinder
    directions in the square-tiled surface $S_{a,b}$. 
\end{proposition}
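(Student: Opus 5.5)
The plan is to reduce the statement to a statement about horizontal one-cylinder surfaces in the $\slz$-orbit of $S=S_{a,b}$, and then rule these out with an $\slz$-invariant that is cheap to compute. First note that $a\ge 2$ odd forces $a\ge 3$, and $b\ge 1$ odd, so $S\in\cH(1,1)$. Also $n=a+b$ is even, and $S$ is primitive (the horizontal cylinder of width $a$ and the vertical cylinder of width $b+1$ already give holonomies spanning $\ZZ^2$).

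Suppose $S$ had a one-cylinder direction $v$. That direction is rational, so some $g\in\slz$ sends it to the horizontal. Then $g\cdot S$ is a primitive square-tiled surface in $\cH(1,1)$ with $n$ squares and a single horizontal cylinder, which has height $1$ and width $n$. It therefore suffices to find an invariant of the $\slz$-orbit that distinguishes $S$ from every horizontally one-cylinder surface with $n$ squares.

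The invariant I would use is the number of integral Weierstrass points, that is, fixed points of the hyperelliptic involution $\iota$ lying at vertices of the square tiling. This is the same kind of invariant the paper uses to separate $A_n$ from $B_n$ via~\cite{Hubert-Lelievre}, and in the spirit of the parity condition of~\cite{Duryev}. The count is $\slz$-invariant because an affine map with derivative in $\slz$ commutes with $\iota$ and preserves the integer lattice points. In $\cH(1,1)$ the involution swaps the two zeros, so all six Weierstrass points are regular points.

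For a horizontal one-cylinder surface of height $1$ and width $n$, the plan is as follows.
\begin{enumerate}
\item Two Weierstrass points lie on the core curve at height $1/2$, so they are never integral.
\item The remaining four lie on the boundary. In $\cH(1,1)$ the boundary consists of four saddle connections, each appearing once on top and once on bottom, and $\iota$ (rotation by $\pi$ of the cylinder) reverses each of them. So these four points are the midpoints of the saddle connections, and the one on a connection of length $\ell_i$ is integral if and only if $\ell_i$ is even.
\item Since $\sum\ell_i=n$ is even, the number of odd $\ell_i$ is even. Primitivity forbids all $\ell_i$ even. Running through the admissible gluing combinatorics, the number of integral Weierstrass points is then constrained to a specific set of values.
\end{enumerate}

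For $S_{a,b}$ itself I would locate the six fixed points of $\iota$ directly from \Cref{fig:Sab}. The involution preserves the horizontal cylinder of width $a$ (height $1$) and the vertical cylinder of width $b+1$, each rotated about its centre. With $a$ and $b$ both odd, I expect this to produce a count of integral Weierstrass points outside the set allowed in the one-cylinder case, which gives the contradiction.

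The main obstacle is step~(3) together with the comparison. The boundary combinatorics of one-cylinder surfaces in $\cH(1,1)$ (how the four top and bottom saddle connections are permuted) must be pinned down carefully enough to see which counts actually occur. If the raw count turns out not to separate the cases, the fallback is a finer invariant: record which Weierstrass points sit at vertices, at edge midpoints, or at square centres. Equivalently, use the induced action on $\ZZ^2/2\ZZ^2$, i.e.\ the parity invariant of~\cite{Duryev}. I would then recompute both sides with this refined invariant, where the oddness of $a$ and $b$ should force every square centre or edge midpoint class to be realised differently from the one-cylinder model.
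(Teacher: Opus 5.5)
Your framework is the paper's: Weierstrass points lie over $2$-torsion points of $\TT^2$, and the affine action fixes the integral class and permutes the other three. However, there is a genuine gap. The proposal never carries out the two computations the proof rests on, and the version you put first does not separate the cases under the constraint you state.

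In the coordinates of \Cref{fig:Sab}, the fixed points of $\iota$ on $S_{a,b}$ are:
\begin{itemize}
\item $(0,\tfrac12)$ and $(\tfrac a2,\tfrac12)$, inside the width-$a$ cylinder;
\item $(\tfrac12,1)$ and $(\tfrac a2,1)$, on its top boundary;
\item $(a-1,1+\tfrac b2)$ and $(a-\tfrac12,1+\tfrac b2)$, inside the width-one cylinder formed by squares $a+1,\dots,a+b$.
\end{itemize}
For $a,b$ odd there is no integral point, and exactly two points in each of the classes horizontal-edge midpoint, vertical-edge midpoint and square centre. On the one-cylinder side you only exclude the case where all $\ell_i$ are even. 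That still allows $0$ integral points, namely when all four $\ell_i$ are odd, which is exactly what $S_{a,b}$ has. So step~(3) combined with the raw count cannot produce the contradiction as you have set it up.

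You can close this in either of two ways.

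\emph{The paper's route.} Zero integral points forces all $\ell_i$ odd, hence four horizontal-edge midpoints. The two interior points sit at $x_0$ and $x_0+\tfrac n2$, and since $n$ is even they are in the same class: both vertical-edge midpoints or both centres. The unordered counts over the nonzero classes are then $\{4,2,0\}$, against $\{2,2,2\}$ for $S_{a,b}$.

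\emph{Keeping your raw count.} Notice that primitivity also excludes the all-odd case. Since $\iota$ reverses each boundary saddle connection, the top order is the reverse of the bottom order. The closed curves crossing $\sigma_i$ and $\sigma_{i+1}$ then have holonomies differing by $(\ell_i+\ell_{i+1},0)$. Hence the absolute period lattice is generated by $(g,0)$ and a single vector $(h,1)$, where $g=\gcd(\ell_1+\ell_2,\ell_2+\ell_3,\ell_3+\ell_4)$. Its index $g$ is even whenever all $\ell_i$ have the same parity. So a primitive one-cylinder model with $n$ even has exactly two integral Weierstrass points, against zero for $S_{a,b}$. This variant replaces the $\SL_2(\ZZ/2)$ bookkeeping by a period-lattice computation, but it uses primitivity essentially, and in the absolute-period sense.

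Finally, your primitivity check for $S_{a,b}$ is wrong as written. The period $(a,0)$ together with the period of the vertical cylinder through squares $1,a,a+1,\dots,a+b$ does not span $\ZZ^2$ for $a\ge 3$; note also that this cylinder has circumference $b+2$, not $b+1$. Use instead the circumference-one cylinders: the horizontal one through squares $a+1,\dots,a+b$ and the vertical one through squares $2,\dots,a-1$.
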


\begin{figure}[tbh]
    \centering
    \begin{tikzpicture}[mid arrow/.style={
			   postaction={decorate,decoration={
				   markings,
				   mark=at position .5 with {\arrow{latex}}
		   }}}]
        \draw (0,0) rectangle ++(1,1);
        \draw (1,0) rectangle ++(1,1);
        \draw (2,0) rectangle ++(1,1);
        \draw (2,1) rectangle ++(1,1);

        \node[cross out, draw, inner sep=2pt, color=OIVermilion] at (0,0.5) {};
        \node[cross out, draw, inner sep=2pt, color=OIVermilion] at (0.5,1) {};
        \node[cross out, draw, inner sep=2pt, color=OIVermilion] at (1.5,1) {};
        \node[cross out, draw, inner sep=2pt, color=OIVermilion] at (1.5,0.5) {};
        \node[cross out, draw, inner sep=2pt, color=OIVermilion] at (2,1.5) {};
        \node[cross out, draw, inner sep=2pt, color=OIVermilion] at (2.5,1.5) {};
    \end{tikzpicture}
    \caption{The six Weierstrass points on the surface $S_{3,1}$.}
    \label{fig:no_1cyl_a}
\end{figure}

\begin{proof}[Proof of \Cref{prop:no_1cyl}]
    Assume by contradiction that there were a $1$-cylinder direction. Since
    $a+b$ is even, Weierstrass point on the surface would have the following
    pattern, see \Cref{fig:no_1cyl_a}:
    \begin{itemize}
        \item no integral points.
        \item $2$ points on horizontal and vertical sides.
        \item $2$ points on centers of some squares.
    \end{itemize}
    However, up to an affine deformation of the surface, sending the
    $1$-cylinder direction to the horizontal one, we would have one of the two
    configurations in \Cref{fig:no_1cyl_b}, in which there are $4$ points on
    horizontal sides and $2$ either on vertical sides or on the centers of some
    squares. Since the unordered list of Weierstrass points on horizontal and
    vertical sides, and centers of squares is an affine invariant,
    see~\cite{Hubert-Lelievre}, we have a contradiction.
\end{proof}

\begin{figure}[tbh]
    \centering
    \begin{tikzpicture}[mid arrow/.style={
			   postaction={decorate,decoration={
				   markings,
				   mark=at position .5 with {\arrow{latex}}
		   }}}]
        \draw (0,0) rectangle ++(1,1);
        \draw (1,0) rectangle ++(1,1);
        \draw (2,0) rectangle ++(1,1);
        \draw (3,0) rectangle ++(1,1);

        \node[cross out, draw, inner sep=2pt, color=OIVermilion] at (0,0.5) {};
        \node[cross out, draw, inner sep=2pt, color=OIVermilion] at (0.5,1) {};
        \node[cross out, draw, inner sep=2pt, color=OIVermilion] at (1.5,1) {};
        \node[cross out, draw, inner sep=2pt, color=OIVermilion] at (2,0.5) {};
        \node[cross out, draw, inner sep=2pt, color=OIVermilion] at (2.5,1) {};
        \node[cross out, draw, inner sep=2pt, color=OIVermilion] at (3.5,1) {};

        \draw (7,0) rectangle ++(1,1);
        \draw (8,0) rectangle ++(1,1);
        \draw (9,0) rectangle ++(1,1);
        \draw (10,0) rectangle ++(1,1);

        \node[cross out, draw, inner sep=2pt, color=OIVermilion] at (7.5,0.5) {};
        \node[cross out, draw, inner sep=2pt, color=OIVermilion] at (7.5,1) {};
        \node[cross out, draw, inner sep=2pt, color=OIVermilion] at (8.5,1) {};
        \node[cross out, draw, inner sep=2pt, color=OIVermilion] at (9.5,0.5) {};
        \node[cross out, draw, inner sep=2pt, color=OIVermilion] at (9.5,1) {};
        \node[cross out, draw, inner sep=2pt, color=OIVermilion] at (10.5,1) {};
    \end{tikzpicture}
    \caption{The six Weierstrass points on the surface in a hypotetical
    one-cylinder direction of $S_{3,1}$, after affine deformation.}
    \label{fig:no_1cyl_b}
\end{figure}

A consequence of this result and \Cref{thm:bad_cyls_subquad_H11} is that the
number of cylinders on the infinite staircase $S_{a,b}^\infty$ is
\emph{subquadratic}, which is our \Cref{bigthm:subquadratically}. To the best of
our knowledge, this is the first such example.

\begin{corollary}\label{cor:SV_staircases_H11}
    Let $a\ge 2$ and $b\ge 0$ two odd integers. Then, the number of cylinders in
    the infinite staircase $S_{a,b}^\infty$ of length at most $L$,
    $N(S_{a,b}^\infty, L)$ is \emph{subquadratic}:  $N(S_{a,b}^\infty, L) =
    o(L^2)$.
\end{corollary}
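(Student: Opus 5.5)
The plan is to split the count into good and bad cylinders, exactly as in \Cref{sec:H2}. By the discussion in \Cref{sec:Zcovers}, the count on the staircase satisfies
\[
N(S_{a,b}^\infty,L) = N_{\text{good}}(S_{a,b},L) + N_{\text{bad}}(S_{a,b},L),
\]
so it suffices to show that both terms are $o(L^2)$. First I would sort the cases by stratum, using \Cref{sec:HSstaircases}. Since $a$ and $b$ are both odd, we have $a\ge 3$ and $b\ge 1$. This forces $S_{a,b}\in\cH(1,1)$ (Case $(1,1)$), and the pair $(2,0)$ and the $\cH(2)$ cases are excluded. Hence \Cref{thm:bad_cyls_subquad_H11} applies directly and gives $N_{\text{bad}}(S_{a,b},L)=o(L^2)$.

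It then remains to show that there are no good cylinders at all, which would give $N_{\text{good}}\equiv 0$. My intended route is through \Cref{lemma:good_are_one-cyl}: if the smallest $\Gamma(S)$-invariant subspace $K$ containing $\eta$ is all of $H_1^{(0)}(S,\Sigma;\ZZ)$, then good cylinders are exactly the cylinders of one-cylinder decompositions. \Cref{prop:no_1cyl} says there are no one-cylinder directions when $a,b$ are odd, so the good set is empty.

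The main obstacle is verifying the hypothesis on $K$ in $\cH(1,1)$, where $K$ might be a proper invariant subspace. In the non-split case of the proof of \Cref{thm:bad_cyls_subquad_H11}, the image of $\rho^{(0)}_\Gamma$ has finite index in $\slz\rtimes\ZZ^2$. There the only proper invariant subspaces are contained in the relative line, i.e. the kernel direction of the projection to $H_1^{(0)}(S;\ZZ)$. I would therefore show that the staircase class $\eta=\eta_A+\eta_B$ projects nontrivially to absolute homology, and then $K$ is everything. If instead we are in the split case, I would argue more directly. A good cylinder has core curve annihilating $K$, and since $K\supseteq H_1^{(0)}(S;\ZZ)$ (again by the nontrivial absolute projection and irreducibility of the $\slz$ part), its homology class would lie in the tautological plane $H_1^{\text{st}}$. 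The hyperelliptic argument in the proof of \Cref{lemma:good_are_one-cyl} then shows that such a cylinder is the unique cylinder in its direction, contradicting \Cref{prop:no_1cyl}.

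To check that $\eta$ has a nonzero absolute component, I would compute explicitly on $S_{a,b}$: intersect $\eta$ with a zero-holonomy absolute cycle and get a nonzero value. A natural candidate is the difference of the horizontal core curves of the two horizontal cylinders, suitably weighted to have zero holonomy. Combining both parts gives $N(S_{a,b}^\infty,L)=0+o(L^2)$.
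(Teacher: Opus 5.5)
Your overall structure matches the paper's: good cylinders plus bad cylinders, with the bad ones $o(L^2)$ by \Cref{thm:bad_cyls_subquad_H11}, the good ones equal to one-cylinder cylinders by \Cref{lemma:good_are_one-cyl}, and none existing by \Cref{prop:no_1cyl}. The gap is in the step that carries the real content, namely showing that $K$ is all of $H_1^{(0)}(S,\Sigma;\QQ)$, and your description of the invariant subspaces is reversed. In $H_1^{(0)}(S,\Sigma)$ the absolute part $H_1^{(0)}(S)$ is an invariant \emph{subspace}, not a quotient. In the non-split case there is no invariant relative line and no invariant projection onto absolute homology. There the only proper nonzero invariant subspace is $H_1^{(0)}(S;\QQ)$ itself; the picture you describe is that of $H_1(S\setminus\Sigma)$, where loops around the zeros span an invariant kernel. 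So in the non-split case what you need is $\partial\eta\neq 0$. This holds because $\eta_B$ joins the two zeros while $\eta_A$ is a loop at one of them.

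In the split case you need $\eta\notin H_1^{(0),\mathrm{rel}}(S,\Sigma;\ZZ)$, equivalently that $\eta$ is not $\Gamma(S)$-invariant. Your proposed check cannot detect this. Since $\partial\eta\neq0$, the number $\langle\gamma,\eta\rangle$ is not determined by the absolute class of $\gamma$: adding a small loop around a zero changes it by $\pm1$. So a nonzero value says nothing about an \enquote{absolute component}, and a generator of the invariant relative line would pass the same test. Your concrete candidate even gives $0$, because $\eta_A$ and $\eta_B$ are horizontal saddle connections, disjoint from the horizontal core curves. The missing idea is to apply an explicit element of the affine group to $\eta$. The paper uses the Dehn multitwist $T$ in direction $(1,1)$, for which $T\eta_A=\eta_A+\delta_l$ and $T\eta_B=\eta_B-(a+b-1)\delta_s$, so $T\eta\neq\eta$. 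Combined with $\partial\eta\neq0$, this leaves $K=H_1^{(0)}(S,\Sigma;\QQ)$ as the only possibility in either case.

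Your split-case fallback is also unsound as stated. Knowing only that the absolute class of a core curve lies in $H_1^{\mathrm{st}}(S;\QQ)$ does not force the cylinder to be alone in its direction, since the hyperelliptic argument in \Cref{lemma:good_are_one-cyl} only rules out homologous pairs. For instance, glue two unit tori along a vertical slit of length $\ell$. This gives three horizontal cylinders: two of height $1-\ell$ with core curves $\gamma_1,\gamma_2$, and one of height $\ell$ crossing the slit, whose core curve satisfies $[\gamma_3]=[\gamma_1]+[\gamma_2]$ in $H_1(S)$. Hence the class $(1-\ell)([\gamma_1]+[\gamma_2])+\ell[\gamma_3]$, which is Poincar\'e dual to the imaginary part of $\omega$, equals $[\gamma_3]$. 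So $\gamma_3$ lies in the tautological plane although its direction has three cylinders. What excludes such cylinders is the relative part of $K$, which is exactly why you need $K$ to be the whole space. Once you have that (via $\partial\eta\neq0$ and $T\eta\neq\eta$), \Cref{lemma:good_are_one-cyl} applies directly and the detour is unnecessary.
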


\begin{figure}[tbh]
    \centering
    \begin{tikzpicture}[scale=1.5, mid arrow/.style={
			   postaction={decorate,decoration={
				   markings,
				   mark=at position .5 with {\arrow{latex}}
		   }}}]
        \foreach \i in {0,...,4}
            \draw (\i,0) rectangle ++(1,1);
        \foreach \i in {0,...,2}
            \draw (4,\i+1) rectangle ++(1,1);
        \draw[OIBluishGreen, mid arrow, very thick] (0,0.5) -- (0.5,1) node[midway,right] {$\delta_s$};
        \draw[OIBluishGreen, mid arrow, very thick] (4.5,0) -- (5,0.5) node[left] {$\delta_s$};
        \draw[OIOrange, mid arrow, very thick] (0.5,0) -- (1.5,1) node[midway,right] {$\delta_l$};
        \draw[OIOrange, mid arrow, very thick] (1.5,0) -- (2.5,1) node[midway,right] {$\delta_l$};
        \draw[OIOrange, mid arrow, very thick] (2.5,0) -- (3.5,1) node[midway,right] {$\delta_l$};
        \draw[OIOrange, mid arrow, very thick] (3.5,0) -- (5,1.5) node[midway,right] {$\delta_l$};
        \draw[OIOrange, mid arrow, very thick] (4,1.5) -- (5,2.5) node[right] {$\delta_l$};
        \draw[OIOrange, mid arrow, very thick] (4,2.5) -- (5,3.5) node[right] {$\delta_l$};
        \draw[OIOrange, mid arrow, very thick] (4,3.5) -- (4.5,4) node[midway,right] {$\delta_l$};

        \draw[OISkyBlue, mid arrow, very thick] (1,0) -- (0,0) node[midway,below] {$\eta_A$};
        \draw[OISkyBlue, mid arrow, very thick] (5,4) -- (4,4) node[midway,above] {$\eta_A$};
        \draw[OIVermilion, mid arrow, very thick] (0,1) -- (1,1) node[midway,above] {$\eta_B$};
        \draw[OIVermilion, mid arrow, very thick] (4,0) -- (5,0) node[midway,below] {$\eta_B$};

    \node[v-bull] at (0,0) {};
    \node[v-bull] at (1,0) {};
    \node[v-bull] at (1,1) {};
    \node[v-bull] at (4,4) {};
    \node[v-bull] at (5,4) {};
    \node[v-bull] at (5,0) {};
    \node[b-bull] at (0,1) {};
    \node[b-bull] at (4,0) {};
    \node[b-bull] at (4,1) {};
    \node[b-bull] at (5,1) {};
    \end{tikzpicture}
    \caption{The two cyliders in the $(1,1)$ direction in the surface $S_{5,3}$, with core curves $\delta_s$ and $\delta_l$.}
    \label{fig:Dehn}
\end{figure}

\begin{proof}
    Thanks to \Cref{prop:no_1cyl}, there are no one-cylinder decompositions of
    $S = S_{a,b}$, for $a\ge 2$, $b\ge 0$ odd. If we show that $\eta$ is not
    $\Gamma(S)$-invariant, then this will imply that, even if there were an
    equivariant splitting of the zero holonomy homology as in
    \cref{eq:equivariant_splitting}, $\eta$ does not lie completely in $H^{(0),
    \text{rel}}_1 (S, \Sigma; \ZZ)$. In turn, this implies that $K = H^{(0)}_1
    (S, \Sigma; \ZZ)$ and, by \Cref{lemma:good_are_one-cyl}, good cylinders
    correspond to $1$-cylinders also for the staircases we are considering. 

    In the $(1,1)$ direction, the surface $S$ decomposes into two cylinders, a
    short one, whose core curve we denote $\delta_s$ and a long one, with core
    curve $\delta_l$, see \Cref{fig:Dehn}. Consider the Dehn multitwist $T$ in
    this direction, which correspond to $1$ twist in the long cylinder and
    $a+b-1$ twists in the small one. Recalling that, in general, the Dehn twist
    along $\gamma$ of a curve $\alpha$ is given by
    \[
        T_\gamma (\alpha) = \alpha + \pairing{\gamma,\alpha}\cdot\gamma,
    \]
    we obtain
    \[
        T\eta_A = \eta_A + \delta_l, \qquad \text{and} \qquad T\eta_B = \eta_B - (a+b-1) \delta_s.
    \]
    Hence,
    \[
        \pairing{\eta_A+\eta_B, T(\eta_A + \eta_B)} = \pairing{\eta_A+\eta_B, \eta_A + \delta_l + \eta_B - (a+b-1) \delta_s} = - (a+b-2) \neq 0,
    \]
    which implies that $\eta$ is not $\Gamma(S)$-invariant.
\end{proof}

\section{The remaining Hubert--Schmithüsen
staircase: \texorpdfstring{$S_{2,0}\in\cH(0,0)$}{S(2,0) in H(0,0)}}
\label{sec:H00}

In this last section, we complete the study of the Hubert--Weitze-Schmithüsen
family with its only member of genus one. Namely, the infinite staircase
$\tS_{2,0}$ of \Cref{fig:staircase_cover} on \cpageref{fig:staircase_cover} over
$S = S_{2,0} \in \cH(0,0)$ of \Cref{fig:case00} on \cpageref{fig:case00}, that
is, a torus with two marked points.

\begin{proposition}\label{cor:SV_staircases_H00}
    Let $S = S_{2,0}\in\cH(0,0)$.
    The Siegel--Veech constants associated with the counting problem of cylinders in a one-cylinder decomposition in $S$ is given by
    \[
        c_1(S) = \frac{1}{12\zeta(2)}.
    \]
    The Siegel--Veech constants associated with the counting problem of cylinders in a two-cylinder decomposition in $S$ is given by
    \[
        c_2(S) = \frac{1}{3\zeta(2)} = 4 c_1(S).
    \]
\end{proposition}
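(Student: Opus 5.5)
Since $S=S_{2,0}$ is a flat torus with two marked points, I would not go through \Cref{lemma:SV_1cyl_cusps} but count lattice vectors directly. I keep the normalization in which each square has area one, so $\area(S)=2$; this matches the convention used for the staircases.

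\emph{Lattice and marked points.} From $\sigma_h=\sigma_v=(1\,2)$, the underlying torus is $\RR^2/\Lambda$ with
\[
\Lambda=\ZZ(2,0)+\ZZ(1,1)=\Set{(x,y)\in\ZZ^2 \given x+y \text{ even}},
\]
which has covolume $2$. The two marked points are $0$ and $(1,0)\notin\Lambda$. Periodic directions are the directions of primitive vectors $v$ of $\Lambda$. In such a direction the closed leaves through the marked points are parallel lines, and consecutive closed leaves are spaced $2/|v|$ apart.

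\emph{One or two cylinders.} The two marked points lie on the same closed leaf if and only if $(1,0)\in\RR v+\Lambda$. Writing $v=(a,b)$ and taking the transverse coordinate $\det(v,\cdot)/|v|$, this happens if and only if $-b/|v|$ is a multiple of $2/|v|$, i.e. $b$ is even. Since $a+b$ is even, $a$ is then even too.

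So the direction of $v$ is a one-cylinder direction exactly when $v=2w$ with $w\in\ZZ^2$ primitive and $w\notin\Lambda$ (i.e. $w$ has odd coordinate sum). Otherwise $a,b$ are both odd, $v$ is primitive in $\ZZ^2$, and the direction splits into two cylinders. Both of these cylinders have circumference $|v|$, because each is bounded by saddle connections joining the two marked points.

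\emph{Counting.} I use the classical fact that primitive vectors of $\ZZ^2$ of norm at most $L$ number $\sim \pi L^2/\zeta(2)$. Their parity classes $(0,1),(1,0),(1,1) \bmod 2$ are equidistributed, each with density $1/3$. This follows from $\SL_2(\ZZ)$ acting transitively on these classes, or directly by an elementary Möbius inversion. Counting unoriented directions:
\begin{itemize}
\item One-cylinder directions: $w$ primitive with odd coordinate sum (density $2/3$) and $|w|\le L/2$. This gives $\tfrac12\cdot\tfrac23\cdot\frac{\pi (L/2)^2}{\zeta(2)}=\frac{1}{12\zeta(2)}\pi L^2$, so $c_1(S)=\frac{1}{12\zeta(2)}$.
\item Two-cylinder directions: $v$ primitive with both coordinates odd (density $1/3$). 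This gives $\tfrac12\cdot\tfrac13\cdot\frac{\pi L^2}{\zeta(2)}$ directions. Each contributes two cylinders, so $c_2(S)=\frac{1}{3\zeta(2)}=4c_1(S)$.
\end{itemize}

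\emph{Main obstacle.} The delicate points are bookkeeping rather than depth. First, one must correctly identify which marked-point configurations give one versus two cylinders; the transverse-spacing computation above handles this. Second, one must fix the normalization (unit squares rather than unit area) and the unoriented convention consistently with \Cref{lemma:SV_1cyl_cusps} and \Cref{cor:SV_staircases_H2}. I would check the result against a direct enumeration of small vectors, e.g. horizontal $(2,0)$ gives one cylinder and diagonal $(1,1)$ gives two.
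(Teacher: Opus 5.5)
Your proof is correct and gives both constants, but it takes a different route from the paper.

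The paper applies Lelièvre's cusp formula \cref{eq:Siegel--Veech_Lelievre}. From the $\slz$-orbit of three surfaces it reads off two cusps:
\begin{itemize}
\item a one-cylinder cusp (cusp width $2$, cylinder width $2$);
\item a two-cylinder cusp (cusp width $1$, two cylinders of width $1$).
\end{itemize}
This yields $1/(6\zeta(2))$ and $2/(3\zeta(2))$ for the unit-area surface, and the paper then divides by $\area(S)=2$.

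You instead use that $S_{2,0}$ is a torus $\RR^2/\Lambda$, so cylinders are indexed by primitive vectors of $\Lambda$. The transverse-spacing computation reduces the one/two-cylinder dichotomy to the parity of the coordinates. You then finish with the equidistribution of primitive vectors of $\ZZ^2$ among the three nonzero classes mod $2$.

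What each approach buys:
\begin{itemize}
\item Your argument is elementary and self-contained. It needs no renormalization, since you work with unit squares throughout. As a by-product it shows that the two-cylinder directions are exactly those with both coordinates odd, which the paper uses later in \Cref{prop:no_bad_cyls}.
\item The paper's argument is uniform with the machinery of \Cref{lemma:SV_1cyl_cusps}. It applies verbatim to higher-genus square-tiled surfaces, where no lattice description exists, at the cost of computing the orbit and the cusp data.
\end{itemize}

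Two small points to fix when writing it up:
\begin{itemize}
\item In a two-cylinder direction, each boundary saddle connection joins a marked point to itself, not the two marked points to each other. The circumference $|v|$ holds simply because every closed leaf in direction $v$ has length $|v|$.
\item $\SL_2(\ZZ)$-transitivity on the parity classes does not by itself give equal densities in discs, since $\SL_2(\ZZ)$ does not preserve norms. You would also need that each class has an asymptotic density with respect to arbitrary dilated regions (ellipses). The Möbius-inversion justification is the cleaner one to write down.
\end{itemize}
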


\begin{figure}[tbh]
    \centering
    \begin{tikzpicture}[scale=.85]
        \foreach \i in {0,...,1}
            \draw (\i,0) rectangle ++(1,1);

        \draw[very thick] (0,0) -- ++(1,0) node[midway,below] {$A$};
        \draw[very thick] (1,1) -- ++(1,0) node[midway,above] {$A$};
        \draw[very thick] (1,1) -- ++(-1,0) node[midway,above] {$B$};
        \draw[very thick] (2,0) -- ++(-1,0) node[midway,below] {$B$};
        
    \node[v-bull] at (0,0) {};
    \node[v-bull] at (2,0) {};
    \node[v-bull] at (1,1) {};
    \node[b-bull] at (0,1) {};
    \node[b-bull] at (2,1) {};
    \node[b-bull] at (1,0) {};

     \foreach \i in {4,...,5}
            \draw (\i,0) rectangle ++(1,1);

        \draw[very thick] (4,0) -- ++(1,0) node[midway,below] {$A$};
        \draw[very thick] (5,0) -- ++(1,0) node[midway,below] {$B$};
        \draw[very thick] (5,1) -- ++(-1,0) node[midway,above] {$A$};
        \draw[very thick] (6,1) -- ++(-1,0) node[midway,above] {$B$};
        
    \node[v-bull] at (4,0) {};
    \node[v-bull] at (4,1) {};
    \node[v-bull] at (6,0) {};
    \node[b-bull] at (6,1) {};
    \node[b-bull] at (5,0) {};
    \node[b-bull] at (5,1) {};

    \draw (8.5,0.5) rectangle ++(1,1);
    \draw (8.5,-0.5) rectangle ++(1,1);

    \draw[very thick] (8.5,0.5) -- ++(0,1) node[midway,left] {$B$};
    \draw[very thick] (9.5,0.5) -- ++(0,1) node[midway,right] {$B$};
    \draw[very thick] (8.5,0.5) -- ++(0,-1) node[midway,left] {$A$};
    \draw[very thick] (9.5,0.5) -- ++(0,-1) node[midway,right] {$A$};
    
    \node[v-bull] at (8.5,1.5) {};
    \node[v-bull] at (9.5,1.5) {};
    \node[v-bull] at (8.5,-0.5) {};
    \node[b-bull] at (9.5,-0.5) {};
    \node[b-bull] at (8.5,0.5) {};
    \node[b-bull] at (9.5,0.5) {};

    \draw[thick, -{Stealth}, color=OIBlue] (0,-0.25) to [out=220, in=140, looseness=5] (0,1.25);
    \draw[thick, -{Stealth}] (2.25,0.25) to (3.75,0.25);
    \draw[thick, -{Stealth}] (3.75,0.75) to (2.25,0.75);
    \draw[thick, -{Stealth}, color=OIBlue] (6.25,0.75) to (7.75,0.75);
    \draw[thick, -{Stealth}, color=OIBlue] (7.75,0.25) to (6.25,0.25);
    \draw[thick, -{Stealth}] (9.75,1.5) to [out=30, in=330, looseness=3.25] (9.75,-0.5);
    \end{tikzpicture}
\caption{The $\slz$ orbit of $S_{2,0}\in\cH(0,0)$. Blue arrows correspond to the
rotation by $\pi/2$, black arrows to the horizontal shear by $1$.}
\label{fig:Z_20_orbit}
\end{figure}

\begin{proof}
    Since $S\in\cH(0,0)$, there are only two cusps: one associated with
    one-cylinder decompositions and one for two-cylinder ones. The complete
    orbit under $\slz$ is represented in \Cref{fig:Z_20_orbit}. In particular,
    $\#\,\slz\cdot S = 3$. Using \cref{eq:Siegel--Veech_Lelievre} on
    \cpageref{eq:Siegel--Veech_Lelievre} we immediately obtain $c_1(S^{(1)}) =
    1/(6\zeta(2))$. Since $\area(S) = 2$, the claim is proved.

    For the two-cylinder case, \cref{eq:Siegel--Veech_Lelievre} becomes
    \[
        \frac{1}{2\zeta(2)} \frac{n}{\#\,\slz \cdot S} \cdot\cw(C) \left( \frac{1}{w_1^2} + \frac{1}{w_2^2} \right),
    \]
    where $C$ is the two-cylinder cusp and $w_1$ and $w_2$ are the width of the
    two cylinders. In this case, $w_1 = w_2 = 1$ and the cusp width $\cw (C) =
    1$ as well, as can be seen from \Cref{fig:Z_20_orbit} (right), so
    \[
    	c_2(S^{(1)})
	= \frac{1}{2\zeta(2)} \cdot \frac{2}{3} \cdot 1 \cdot (1+1)
	= \frac{2}{3\zeta(2)}.
    \]
    Dividing again by the area of $S$ we are done.
\end{proof}

\begin{remark}
    The same results, and the corresponding computations, can be carried out
    analogously for every square-tiled surface in $\cH(0,0)$, and we do not
    give the details. We only remark that, in this case, $K = \QQ\eta$, so that
    $K_\eta = K$, and good cylinders, that is, cylinders in one-cylinder
    directions, are exactly the cylinders that lift to $\tS$; in particular,
    there are no bad cylinders.
\end{remark}

Let $q$ be a prime, and consider, as before, the degree $q$ cyclic cover $\qS$,
defined by $\eta$. Since $\eta$ has to be holonomy zero, it is necessarily a
multiple of $\eta_A + \eta_B$, where $\eta_A$ and $\eta_B$ are as in
\Cref{fig:Z20_basis_homology}. We restrict ourselves with the case $\eta =
\eta_A + \eta_B$.

By \Cref{fig:Z_20_orbit}, the Veech group $\Gamma(S)$ of $S$ is generated by 
\[
    R = \begin{pmatrix}
        0 & -1 \\
        1 & 0
    \end{pmatrix}
    \qquad \text{and} \qquad
    T_h = \begin{pmatrix}
        1 & 2 \\
        0 & 1
    \end{pmatrix}.
\]
This is a subgroup of index $3$ inside $\slz$, which has two cusps. The cusp
directions correspond to $\QQ \cup \{\infty\}$. One can see that the two
cusps correspond to the orbit of $1$ and $0$ under the group. The former is the
one formed by rational numbers with odd numerator and denominator, and gives
rise to the two-cylinders decompositions. The latter corresponds to rational
numbers in which the numerator or the denominator is even, and corresponds to
the one-cylinder decompositions, i.e., the good cylinders. We now show that all
two-cylinders lift to $q$-strips in the cover.

\begin{figure}[tbh]
    \centering
    \begin{tikzpicture}[scale=1.5, mid arrow/.style={
			   postaction={decorate,decoration={
				   markings,
				   mark=at position .5 with {\arrow{latex}}
		   }}}]
        \foreach \i in {0,...,1}
            \draw (\i,0) rectangle ++(1,1);

        \draw[OISkyBlue, mid arrow, very thick] (1,0) -- (0,0) node[midway,below] {$\eta_A$};
        \draw[OISkyBlue, mid arrow, very thick] (2,1) -- (1,1) node[midway,above] {$\eta_A$};
        \draw[OIVermilion, mid arrow, very thick] (0,1) -- (1,1) node[midway,above] {$\eta_B$};
        \draw[OIVermilion, mid arrow, very thick] (1,0) -- (2,0) node[midway,below] {$\eta_B$};
        \draw[OIOrange, mid arrow, very thick] (0,0) -- (0,1) node[midway,left] {$v$};
        \draw[OIOrange, mid arrow, very thick] (2,0) -- (2,1) node[midway,right] {$v$};
        \draw[mid arrow, very thick] (1,1) -- (1,0) node[midway,right] {$w$};

    \node[v-bull] at (0,0) {};
    \node[v-bull] at (2,0) {};
    \node[v-bull] at (1,1) {};
    \node[b-bull] at (0,1) {};
    \node[b-bull] at (2,1) {};
    \node[b-bull] at (1,0) {};
    \end{tikzpicture}
    \caption{The basis of $H_1(S, \Sigma,\ZZ)$.}
    \label{fig:Z20_basis_homology}
\end{figure}

\begin{proposition}\label{prop:no_bad_cyls}
    Let $p/q$ be a rational number with odd numerator and denominator. Then, the
    core curve of each one of the cylinders in direction $p/q$ has intersection
    $\pm 1$ with $\eta$. In particular, the cylinders in direction $p/q$ lift to
    $q$-strips in $\qS$.
\end{proposition}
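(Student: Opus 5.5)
We work in the basis $\eta_A,\eta_B,v,w$ of $H_1(S,\Sigma;\ZZ)$ from \Cref{fig:Z20_basis_homology}. The idea is to check the claim in one direction of the two-cylinder cusp and then move it to every other direction of that cusp with the Veech group. Note that the letter $q$ in the statement means two different things: a denominator of the slope, and the prime in $\qS$. The slope $p/q$ has odd numerator and denominator, so by the discussion preceding the statement it lies in the $\Gamma(S)$-orbit of the slope $1$. We can therefore write $p/q = g\cdot 1$ with $g = D\phi$ for some $\phi\in\Aff(S)$, where $\Gamma(S)=\langle R, T_h\rangle$. The map $\phi$ sends the cylinders in direction $1$ to the cylinders in direction $p/q$. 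It preserves the algebraic intersection pairing, since it is orientation preserving. The key point is then
\[
\pairing{\phi_*\gamma, \eta} = \pairing{\gamma, \phi_*^{-1}\eta},
\]
so it suffices to show two things. First, both core curves in direction $1$ have intersection $\pm1$ with $\eta$. Second, $\eta$ is $\Gamma(S)$-invariant up to sign, that is $\phi_*\eta = \pm\eta$.

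For the second point, $\phi_*\eta$ has zero holonomy, because $\hol(\phi_*\eta) = D\phi\,\hol(\eta) = 0$. As recalled before the statement, the zero holonomy part of $H_1(S,\Sigma;\ZZ)$ is $\ZZ(\eta_A+\eta_B)$. Also $\phi_*$ is an automorphism of $H_1(S,\Sigma;\ZZ)$ preserving this rank-one sublattice, so it must act on it by $\pm1$. One caveat: affine maps fix $\Sigma$ pointwise by the paper's convention, but even if the two marked points were swapped the argument would still go through, since only the sublattice matters. Alternatively, one could check directly on the generators $R$ and $T_h$ using \Cref{fig:Z_20_orbit}, but the rank-one argument avoids that computation.

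For the first point, I would compute explicitly on \Cref{fig:Z20_basis_homology}. In direction $(1,1)$ the surface $S_{2,0}$ splits into two cylinders, each of width $1$ by the cusp computation in the proof of \Cref{cor:SV_staircases_H00}. Their core curves are the diagonals of squares $1$ and $2$. A diagonal of slope $1$ through square $1$ starts on the bottom side of square $1$, which is labelled $A$, and exits through its top side, labelled $B$. It therefore crosses $\eta_A$ once and $\eta_B$ once, with orientations to be determined. Since $\eta_A$ points left on the bottom and $\eta_B$ points right on the top, both arcs are crossed upward, and the signs of the two crossings will agree or cancel depending on orientation conventions. The expected answer is total $\pm1$. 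This suggests that the core curve actually crosses only one of the labelled segments, with the other crossing landing on an unlabelled side such as $v$ or $w$. So I would carefully trace the closed geodesic, for instance starting at the midpoint of the left side of square $1$, through the identifications $\sigma_h=\sigma_v=(1\,2)$, and record each crossing of the segments $A$ and $B$ with its sign. By the hyperelliptic-type symmetry swapping the two squares, the same computation applied to the other cylinder gives $\pm1$ as well.

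The main obstacle is this bookkeeping of signs: the two crossings must not cancel. Once the intersection is $\pm1$, it is nonzero modulo every prime, so these cylinders are strips modulo $q$ in the sense of \Cref{sec:q-covers}. They therefore lift to single cylinders $q$ times longer in $\qS$, that is, to $q$-strips.
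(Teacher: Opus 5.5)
Your reduction is the same as the paper's: $\Gamma(S)$ acts transitively on the single two-cylinder cusp, $\eta$ is invariant, and one checks direction $1$. Deriving $\phi_*\eta=\pm\eta$ from the fact that $\ker\hol=\ZZ\eta$ has rank one is correct. It is the alternative the paper records in the remark after its proof, in place of computing $R_*\eta=\eta$ and $(T_h)_*\eta=\eta$ on generators.

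The gap is the base case, which is the only non-formal content of the statement. You never carry it out, and the picture you sketch for it is wrong. The diagonals of the squares are not core curves. The diagonal of square $1$ from $(0,0)$ to $(1,1)$ and the diagonal of square $2$ from $(1,0)$ to $(2,1)$ pass through the marked points, and they are exactly the two boundary saddle connections of the direction-$1$ decomposition. Moreover, a slope-$1$ segment entering square $1$ through the interior of its bottom side leaves through its right side, not through the top side $B$. So the situation you worry about, one crossing with $\eta_A$ and one with $\eta_B$ that might cancel, never arises. But your proposal gives no argument for what does happen.

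Concretely, write $S=\RR^2/\Lambda$ with $\Lambda=\{(x,y)\in\ZZ^2 : x+y \text{ even}\}$. The direction-$1$ cylinders are the strips between the lines $y=x-1$, $y=x$ and $y=x+1$, with core curves $y=x\mp\tfrac12$.
\begin{itemize}
\item The first core curve runs from $(\tfrac12,0)$ on $A$, through $(1,\tfrac12)$ on the middle side $w$, to $(\tfrac32,1)$, which is the same point of $A$.
\item The second runs from $(0,\tfrac12)$ on $v$ to $(\tfrac12,1)$ on $B$. It continues from $(\tfrac32,0)$ and closes up at $(2,\tfrac12)\sim(0,\tfrac12)$.
\end{itemize}
Thus each core curve meets the support $A\cup B$ of $\eta=\eta_A+\eta_B$ transversally in exactly one point. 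Hence $\pairing{\gamma,\eta}=\pm1$, with no cancellation to rule out, and inserting this completes your proof.

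Also drop the symmetry shortcut for the second cylinder as you phrased it. The rotation by $\pi$ about $(1,\tfrac12)$ swaps the two squares but maps each direction-$1$ cylinder to itself, so it does not relate the two cylinders. The translation by $(1,0)$ does swap them, and sends $\eta$ to $-\eta$. The direct check above is just as short.
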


\begin{proof}
    Let $\gamma$ be the core curve of a cylinder in direction $p/q$. We will
    show that $\langle \gamma, \eta \rangle = \pm 1$. Since the Veech group
    $\Gamma(S)$ acts transitively on two-cylinders directions (there is only one
    such cusp), it is enough to show that $\Gamma(S)$ acts trivially on $\eta$
    and then check that the intersection of $\eta$ with the core curves in
    direction $1$ is equal to $\pm 1$.

    The horizontal Dehn twist clearly stabilizes $\eta$, so let us focus on the
    rotation $R$. Consider the basis of $H_1 (S, \Sigma, \ZZ)$ given by $\{v,
    \eta_A, \eta_B\}$, see \Cref{fig:Z20_basis_homology}. Then, $w = -v -
    \eta_A - \eta_B$. The induced action on $H_1 (S, \Sigma, \ZZ)$ is given
    by
    \[
        R_* \eta_A = -v, \qquad\text{and}\qquad R_* \eta_B = -w.
    \]
    Then, $R_* \eta = - (v+w) = \eta_A + \eta_B = \eta$. By a direct
    computation, the intersection between any of the core curves in direction
    $(1,1)$ and $\eta$ is $1$ and we are done.
\end{proof}

\begin{remark}
    The previous result also follows from the fact that $K = \ker\hol = \langle
    \eta \rangle$, which is $\Gamma(S)$-invariant.
\end{remark}

We conclude by showing that the analogue of \Cref{bigthm:large_genus} holds when
the base is the surface $S_{2,0}$.

\begin{proposition}
    Let $q$ be a prime number and $\qS$ a $\ZZ_q$-cover of $S=S_{2,0}$ defined
    by $\eta$. Then
    \[
        \frac{1}{q} c(\qS) \xrightarrow{q\to\infty} c(\tS) = \frac{1}{12\zeta(2)},
    \]
    where $c(\qS)$ is the Siegel--Veech constant associated to $N(\qS,L)$.
\end{proposition}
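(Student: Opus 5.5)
We follow the scheme of the proof of \Cref{thm:large_genus_H2}, which simplifies here because there are no bad cylinders. The cylinders of $S=S_{2,0}$ split according to the two cusps of $\Gamma(S)$. Let $G$ be the set of cylinders in one-cylinder directions (directions with even numerator or denominator) and $A$ the set of cylinders in two-cylinder directions (odd numerator and denominator).

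First we identify how each family lifts to $\qS$. Cylinders in $G$ are good. As noted in the remark after \Cref{cor:SV_staircases_H00}, here $K=\QQ\eta$, so a good cylinder has zero intersection with $\eta$. Each such cylinder therefore lifts to $q$ isometric copies in $\qS$. By \Cref{prop:no_bad_cyls}, every cylinder in $A$ has core curve with intersection $\pm1$ with $\eta$, which is nonzero modulo $q$ for every prime $q$. Hence every cylinder in $A$ is a strip modulo $q$ and lifts to a single cylinder $q$ times longer. In particular, the sets $B_q$ of bad cylinders modulo $q$ are empty for all $q$, so no analogue of \Cref{lemma:good-q} or of strong approximation is needed.

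Next we transfer the Siegel--Veech constants. We use the normalization of \Cref{sec:q-covers}: $\qS$ has area $q\cdot\area(S)$, and constants are compared as in \cref{eq:q-Siegel--Veech}. The $q$ lifted copies of each cylinder in $G$ contribute $q\,c_1(S)$. A cylinder in $A$ of length $\ell$ becomes a single cylinder of length $q\ell$, so counting up to length $L$ in $\qS$ is counting up to length $L/q$ in $S$; this contributes $c_2(S)/q^2$. Thus
\[
c(\qS) = q\,c_1(S) + \frac{1}{q^2}\,c_2(S),
\]
and dividing by $q$ gives $\frac1q c(\qS)\to c_1(S)$ as $q\to\infty$. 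By \Cref{cor:SV_staircases_H00}, $c_1(S)=\frac{1}{12\zeta(2)}$ and $c_2(S)=\frac{1}{3\zeta(2)}$.

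Finally we identify the limit with $c(\tS)$. The cylinders of $\tS$ modulo $\ZZ$ are exactly the lifts of cylinders in $S$ with zero intersection with $\eta$. These are exactly the cylinders in $G$, since those in $A$ lift to infinite strips. Hence $N(\tS,L)=N_G(S,L)\sim c_1(S)\pi L^2$, so $c(\tS)=\frac{1}{12\zeta(2)}$.

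The main point requiring care is the bookkeeping of normalizations. The paper's constants $c_1(S)$, $c_2(S)$ are already divided by $\area(S)=2$, and we must check that the factors $q$ and $1/q^2$ are applied in the same convention as \cref{eq:q-Siegel--Veech}. The $c_2$ term decays under any consistent convention, so the limit is robust; the only real risk is an off-by-area factor in the identification with $\frac{1}{12\zeta(2)}$. We will check this directly against the statement of \Cref{cor:SV_staircases_H00}.
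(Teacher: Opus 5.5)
Your proposal is correct and follows essentially the same route as the paper: \Cref{prop:no_bad_cyls} rules out bad cylinders, so the two-cylinder family $A$ coincides with the strips modulo $q$, and you obtain $c(\qS) = q\,c(G) + q^{-2}c(A)$, exactly the paper's specialization of \cref{eq:q-Siegel--Veech}, from which the limit follows. Your final paragraph identifying $c(\tS)$ with $c(G)=c_1(S)=\frac{1}{12\zeta(2)}$ makes explicit a step the paper leaves implicit, and your normalization bookkeeping is consistent with \Cref{cor:SV_staircases_H00}.
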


\begin{proof}
    We begin by recalling that the area of $\qS$ is $q$ times the area of $S$.
    Hence, one-cylinder decompositions in $S$ lift to $q$ isometric copies of
    the base cylinder, whereas two cylinders decompositions lift to $q$ times
    longer cylinders. By the above \Cref{prop:no_bad_cyls}, there are no bad
    cylinders in $S$, so, using the notation of the proof of
    \Cref{bigthm:large_genus}, the set $A$ of not good cylinders coincides with
    the set $T_q = T$ of $q$-strips (which does not depend on $q$). Then,
    \cref{eq:q-Siegel--Veech} on \cpageref{eq:q-Siegel--Veech} becomes
    \[
        c(\qS) = qc(G) + \frac{1}{q^2} c(A).
    \]
    Then
    \[
        \lim_{\substack{q\to\infty \\ q \text{ prime}}} \frac{1}{q} c(\qS) = c(G) = c(\tS) = \frac{1}{12\zeta(2)}.\qedhere
    \]
\end{proof}

\printbibliography
\end{document}

\typeout{get arXiv to do 4 passes: Label(s) may have changed. Rerun}